\documentclass[reqno]{amsart}
\usepackage{amssymb,latexsym,amsmath,amsthm,enumerate,amsbsy}
\usepackage[mathscr]{eucal}
\usepackage{framed,color,graphicx}
\usepackage{mathrsfs}
\usepackage[all]{xy}
\usepackage{tikz}
\usetikzlibrary{positioning,decorations.pathreplacing,patterns,decorations.pathmorphing}
\tikzset{%
element/.style={draw, shape=circle, fill=white, inner sep=1.4pt}
}

\DeclareSymbolFont{bbold}{U}{bbold}{m}{n}
\DeclareSymbolFontAlphabet{\mathbbold}{bbold}

\theoremstyle{plain}
\newtheorem{thm}{Theorem}[section]
\newtheorem{lem}[thm]{Lemma}
\newtheorem{cor}[thm]{Corollary}
\newtheorem{pro}[thm]{Proposition}

\theoremstyle{definition}

\newtheorem{remark}[thm]{Remark}

\newcommand{\up}[1]{\textup{#1}}

\newcommand{\bp}{\mathbf{p}}
\newcommand{\bq}{\mathbf{q}}
\newcommand{\br}{\mathbf{r}}

\newcommand{\bt}{\mathbf{t}}
\newcommand{\bu}{\mathbf{u}}
\newcommand{\bv}{\mathbf{v}}
\newcommand{\bw}{\mathbf{w}}

\begin{document}

\title[The finite basis problem for subvarieties of $\mathsf{V}(S_7^0)$]
{The finite basis problem for all subvarieties of the variety $\mathsf{V}(S^0_7)$}

\author{Mengya Yue}
\address{School of Mathematics, Northwest University, Xi'an, 710127, Shaanxi, P.R. China}
\email{myayue@yeah.net}
\date{}
\subjclass[2010]{16Y60, 03C05, 08B15}
\keywords{Additively idempotent semiring, finite basis problem, subvariety lattice}

\begin{abstract}
We solve the finite basis problem for all subvarieties of
$\mathsf{V}(S^0_7)$. By considering their intersections with
$\mathsf{V}(S_7)$, we reduce the problem to six intervals. We show that
$\mathsf{V}(S_7^0)$ has exactly fifteen finitely based subvarieties and
that every other subvariety is nonfinitely based. Moreover,
$\mathsf{V}(S_c(abc))$ and $\mathsf{V}(SR_6)$ are the only limit
subvarieties of $\mathsf{V}(S_7^0)$.
\end{abstract}

\maketitle

\section{Introduction}
A variety is called \emph{finitely based} if it can be defined by a
finite set of identities; otherwise, it is called \emph{nonfinitely
based}. An algebra is said to be finitely based or nonfinitely based
according as the variety it generates has the corresponding property.
The finite basis problem, which asks whether a given algebra or variety
admits a finite equational basis, is one of the classical problems of
universal algebra. It has been studied extensively for groups,
semigroups, rings and semirings; see, for example,
\cite{gk,hn,np,vol01,jrz,rjzl}.

The finite basis problem becomes more structural when it is considered
simultaneously for all members of a subvariety lattice. A variety is
\emph{hereditarily finitely based} if all its subvarieties are finitely
based. At the opposite boundary are the \emph{limit varieties}: these
are nonfinitely based varieties all of whose proper subvarieties are
finitely based, or equivalently, the minimal nonfinitely based
varieties. Thus the determination of the finitely based members of a
subvariety lattice also identifies its limit subvarieties. This point of
view has proved useful in the study of semigroup and semiring varieties;
see \cite{ks,lv,gu,gulz,gus,j,sa,zl}.

An \emph{additively idempotent semiring}, or an \emph{ai-semiring}, is
an algebra $(S,+,\cdot)$ in which $(S,+)$ is a commutative idempotent
semigroup, $(S,\cdot)$ is a semigroup, and multiplication distributes
over addition. Thus
\[
x(y+z)\approx xy+xz,\quad (x+y)z\approx xz+yz
\]
hold. It is \emph{commutative} if its multiplicative reduct is
commutative. Ai-semirings occur naturally in algebraic geometry,
tropical geometry, information science and theoretical computer
science; see \cite{cc,ms,gl,go}. Their addition induces the natural
order
\[
a\leq b\quad\Longleftrightarrow\quad a+b=b,
\]
which is compatible with both operations. The finite basis problem for
ai-semirings has developed rapidly in recent years, particularly for
small finite ai-semirings and for flat semirings; see
\cite{d,gmrz,gpz05,jrz,mr,pas05,rlzc,rlyc,rjzl,rzw,rzs20,
shap23,sr,wrz,yrzs,zrc}. Among the relevant results, Wu, Ren and
Zhao~\cite{wrz} proved that $S_7^0$ is nonfinitely based, while
Gao, Jackson, Ren and Zhao~\cite{gmrz} investigated the finite basis
problem for ai-semirings related to $S_7$ and determined the
subvariety lattice of $\mathsf{V}(S_7)$. Ren, Jackson, Zhao and
Lei~\cite[Theorem~3.6]{rjzl} proved that
$\mathsf{V}(S_c(abc))$ is a limit variety. These results naturally
lead to the problem considered here: determine the finite basis status
of every subvariety of $\mathsf{V}(S_7^0)$.

A second ingredient is the six-element commutative ai-semiring $SR_6$
whose operations are displayed in Table~\ref{6-element ai-semirings}.
Lyu, Ren and Yue~\cite{lry} proved that $\mathsf{V}(SR_6)$ is a
limit variety. In the present paper, $SR_6$ also provides the lower
boundary of the two nonfinitely based intervals that arise in our
classification.

\begin{table}[htbp]
\caption{The Cayley tables of $SR_6$}\label{6-element ai-semirings}
\begin{tabular}{c|cccccc}
$+$               &1 & 2 & 3 & 4 & 5 & 6\\
\hline
                 1& 1 & 1 & 1 & 1 & 1 & 1 \\
                 2&1 & 2 & 1 & 1 & 2 & 1\\
                 3&1 & 1 & 3 & 1 & 1 & 1\\
                 4&1 & 1 & 1 & 4 & 1 & 4\\
                 5&1 & 2 & 1 & 1 & 5 & 1\\
                 6&1 & 1 & 1 & 4 & 1 & 6
\end{tabular}
\qquad\qquad
\begin{tabular}{c|cccccc}
$\cdot$               &1 & 2 & 3 & 4 & 5 & 6\\
\hline
                   1&1 & 1 & 1 & 1 & 1 & 1 \\
                   2&1 & 1 & 1 & 1 & 1 & 3\\
                   3&1 & 1 & 1 & 1 & 1 & 1\\
                   4&1 & 1 & 1 & 1 & 3 & 1\\
                   5&1 & 1 & 1 & 3 & 1 & 3\\
                   6&1 & 3 & 1 & 1 & 3 & 1
\end{tabular}
\end{table}

Our approach combines equational bases, omission identities and the
known lattice $\mathcal L(\mathsf{V}(S_7))$. For
$\mathcal V\leq\mathsf{V}(S_7^0)$, consider
\[
\varphi(\mathcal V)=\mathcal V\cap\mathsf{V}(S_7).
\]
The subvarieties of $\mathsf{V}(S_7)$ that do not contain
$S_c(abc)$ give rise to six fibres of $\varphi$. We determine these
fibres using finite equational bases and identities characterizing the
omission of the relevant small semirings. This reduces the finite basis
problem for $\mathsf{V}(S_7^0)$ to four finite fibres and two
intervals.

The main result states that $\mathsf{V}(S_7^0)$ has exactly fifteen
finitely based subvarieties. Every other subvariety is nonfinitely
based. More precisely, apart from the subvarieties containing
$S_c(abc)$, the nonfinitely based varieties are exactly the nonleast
members of the intervals
\[
[\mathsf{V}(S_c(ab)),\mathsf{V}(S_c(ab)^0)]
\quad\text{and}\quad
[\mathsf{V}(M_2,S_c(ab)),
  \mathsf{V}(M_2^0,S_c(ab)^0)].
\]
The proof uses the fact that every nonleast member of either interval
contains $SR_6$, together with a finite relative basis of
$\mathsf{V}(SR_6)$ inside $\mathsf{V}(S_7^0)$. As a further
consequence, $\mathsf{V}(S_c(abc))$ and $\mathsf{V}(SR_6)$ are the
only limit subvarieties of $\mathsf{V}(S_7^0)$.

The paper is organized as follows. Section~2 recalls the required
terminology and equational tools. In Section~3 we establish the
equational bases and omission criteria, determine the six fibres, and
prove the finite basis classification. The final section records the
resulting description of the finitely based and limit subvarieties.

\section{Preliminaries}

We fix the notation used throughout the paper.  All semirings under
consideration are commutative ai-semirings.  For a class $\mathcal K$
of such semirings, $\mathsf{V}(\mathcal K)$ denotes the variety
generated by $\mathcal K$.  When $\mathcal K=\{S_1,\ldots,S_k\}$, we
write $\mathsf{V}(S_1,\ldots,S_k)$, and $\mathbf T$ denotes the trivial
variety.

Let $X$ be a countably infinite set of variables and let $X_c^+$ be
the free commutative semigroup on $X$.  Elements of $X_c^+$ will be
called \emph{words}.  A term in the language of commutative
ai-semirings may be identified with a finite nonempty subset of
$X_c^+$.  We write such a term as
\[
\bu=\bu_1+\cdots+\bu_n,
\qquad \bu_i\in X_c^+.
\]
Since addition is idempotent and commutative, neither the order nor
the repetition of the summands matters.  We use bold letters for
words and terms, and ordinary letters $x,y,z,\ldots$ for variables.

Let $P_f(X_c^+)$ be the set of all finite nonempty subsets of
$X_c^+$.  With union as addition and setwise multiplication as
multiplication, $P_f(X_c^+)$ is the free commutative ai-semiring on
$X$; see~\cite[Theorem~2.5]{ku}.  Thus a substitution is simply an
endomorphism of $P_f(X_c^+)$.  An identity $\bu\approx\bv$ holds in a
commutative ai-semiring $S$ if
$\varphi(\bu)=\varphi(\bv)$ for every homomorphism
$\varphi:P_f(X_c^+)\to S$.

The addition of an ai-semiring induces its natural order:
\[
a\leq b \quad\Longleftrightarrow\quad a+b=b.
\]
Accordingly, we use
\[
\bu\preceq\bv
\quad\text{and}\quad
\bv\succeq\bu
\]
as abbreviations for the identity $\bv\approx\bv+\bu$.  We call
either expression an \emph{inequality}.  An identity
$\bu\approx\bv$ is equivalent to the two inequalities
$\bu\preceq\bv$ and $\bv\preceq\bu$.  Moreover, if
\[
\bu=\bu_1+\cdots+\bu_k,
\qquad
\bv=\bv_1+\cdots+\bv_\ell,
\]
then $\bu\approx\bv$ is equivalent to
\[
\bu_i\preceq\bv\quad(1\leq i\leq k),
\qquad
\bv_j\preceq\bu\quad(1\leq j\leq\ell).
\]
It will therefore be enough to work with inequalities
$\bq\preceq\bu$ whose left-hand side is a single word.

Next, we introduce some notation.
Let $\bw$ be a nonempty word, and let $x$ be a variable. Then
\begin{itemize}
\item $c(\bw)$ denotes the \emph{content} of $\bw$, that is, the set of all variables that occur in $\bw$;

\item $\ell(\bw)$ denotes the \emph{length} of $\bw$, that is, the number of variables occurring in $\bw$ counting multiplicities;
\end{itemize}

Now let $\bu$ be a term such that $\bu=\bu_1+\bu_2+\cdots+\bu_n$,
where $\bu_i \in X^+$, $1 \leq i \leq n$.
Let $\bq$ be a nonempty word, and let $k$ be a positive integer. Then
\begin{itemize}
\item $c(\bu)$ denotes the \emph{content} of $\bu$, that is,
\[
c(\bu)=\bigcup_{i=1}^n c(\bu_i);
\]

\item $L_{\geq k}(\bu)$ denotes the set $\{\bu_i \in \bu \mid \ell(\bu_i)\geq k\}$;

\item $D_{\bq}(\bu)$ denotes the set $\{\bu_i \in \bu \mid c(\bu_i)\subseteq c(\bq)\}$;

\item $\delta(\bu)$ denotes the set of subsets $Z$ of $c(\bu)$ such that for every
 $\bu_i\in\bu$, $Z\cap c(\bu_i)$ is a singleton and $occ(x,\bu_i)=1$ if $\{x\}=Z\cap c(\bu_i)$.
\end{itemize}

For later use, we also recall the notion of freeness introduced in
\cite{yrg}.  A term $\bu$ is a \emph{subterm} of a term $\bv$ if
\[
\bv=\bp\bu+\br
\]
for suitable $\bp$ and $\br$, where $\bp$ may be the empty word and
$\br$ the empty sum.  A term $\bv$ is $\bu$-\emph{free} if no
substitution instance of $\bu$ is a subterm of $\bv$.  Notice that if
$\bu$ is a subterm of $\bw$ and $\bv$ is $\bu$-free, then $\bv$ is
also $\bw$-free.  Indeed, any substitution instance of $\bw$ contains
the corresponding substitution instance of $\bu$.

We conclude with the flat semirings occurring in the sequel.  Let
$W$ be a nonempty subset of $X_c^+$.  Denote by $S_c(W)$ the set
consisting of all nonempty divisors of the words in $W$, together
with a new element $0$.  For nonzero $\bu,\bv\in S_c(W)$, put
\[
\bu+\bv=
\begin{cases}
\bu,&\bu=\bv,\\
0,&\bu\neq\bv,
\end{cases}
\qquad
\bu\cdot\bv=
\begin{cases}
\bu\bv,&\bu\bv\text{ divides some word in }W,\\
0,&\text{otherwise}.
\end{cases}
\]
In addition, $0+s=0$ and $0s=s0=0$ for every $s\in S_c(W)$.
Then $S_c(W)$ is a commutative flat ai-semiring, with $0$ as the
greatest element of its natural order and as its multiplicative zero;
see~\cite{jrz}.  If $W=\{\bw\}$, we write $S_c(\bw)$ instead of
$S_c(W)$.

\section{Equational bases and omission criteria}
In this section, we collect the equational bases and omission criteria
needed for the finite basis classification.
\subsection{Auxiliary results}
Let \( n \geq 1 \) be an integer. Define
\[
{\bq}^{(n)} = \prod_{i=1}^{2n+1} x_i,
\]
and
\[
{\bu}^{(n)} = \left(\sum_{i=1}^{2n} x_i x_{i+1}\right)+x_{2n+1} x_{1}.
\]
We denote by $\sigma_n$ the inequality ${\bq}^{(n)} \preceq {\bu}^{(n)}$, and write $\Omega$
for the set of all inequalities $\sigma_n$ with $n \geq 1$.
The inequalities $\sigma_n$ already proved useful in \cite{lry, gmrz, wrz},
where they were employed to establish the nonfinite basis property of some algebras.

For a word $\bp$, let $\ell(\bp)$ denote the length of $\bp$, that is,
the number of variables occurring in $\bp$ counting multiplicities.
For a term $\bt$, $c(\bt)$ denotes the content of $\bt$, that is,
the set of variables occurring in $\bt$.
For an integer $k\geq 1$,
let $L_k({\bt})$ denote the term that is the sum of all words in $\bt$ of length $k$.

Now define a general graph ${\mathbb G}_{\bt}$ with vertex set $\mathsf{V}({\mathbb G}_{\bt})=c(L_2({\bt}))$
and edge set $E({\mathbb G}_{\bt})=\{\{x, y\} \mid xy \in L_2({\bt})\}$.
Note that this graph may contain loops (corresponding to the words of the form $x^2$)
but has no multiple edges.
This graph will play a key role in the analysis that follows.
In particular, for $\mathbf{t} = \mathbf{u}^{(n)}$, the graph $\mathbb{G}_{\mathbf{u}^{(n)}}$ is an odd cycle of length $2n+1$,
with vertices $x_1,\dots,x_{2n+1}$ and edges $\{x_i, x_{i+1}\}$ (indices taken modulo $2n+1$).

Define the odd path closure ${\mathbb G}_{\bt}^{odd}$ of ${\mathbb G}_{\bt}$ by
\[
{\mathbb G}_{\bt}^{odd}=\{xy\mid\text{there is an odd path in ${\mathbb G}_{\bt}$ between $x$ and $y$}\}.
\]

The following result from \cite{lry} gives a sufficient condition for an additively idempotent semiring to be nonfinitely based.

\begin{thm}\label{free02}
Let $S$ be a commutative ai-semiring and $\Sigma$ an equational basis for $S$ that contains an infinite subset of $\Omega$.
If $\mathbf{u}^{(m)}$ is $\mathbf{t}$-free for every inequality $\mathbf{s} \preceq \mathbf{t}$ in $\Sigma \setminus \Omega$ and for every integer $m \geq 1$, then $S$ is nonfinitely based.
\end{thm}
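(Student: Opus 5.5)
The plan is the standard compactness argument against finite bases. Suppose, for a contradiction, that $S$ is finitely based. By compactness of equational logic, some finite subset $\Sigma_0\subseteq\Sigma$ is already an equational basis for $S$. Since $\Sigma_0$ is finite, it contains only finitely many members of $\Omega$. Because $\Sigma\cap\Omega$ is infinite, we may fix $m$ so large that $\sigma_m\in\Sigma$ and $\sigma_k\notin\Sigma_0$ for all $k\geq m$. Every inequality in $\Sigma_0$ is then either some $\sigma_k$ with $k<m$ or an inequality $\bs\preceq\bt$ in $\Sigma\setminus\Omega$. Since $\sigma_m$ holds in $S$, it must be derivable from $\Sigma_0$. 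The goal is to show that it is not.

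To do this, describe the closure of $\bu^{(m)}$ under the relevant derivation steps. An elementary step applies an inequality $\bs\preceq\bt$ of $\Sigma_0$ to a term $\bv$: if $\bv=\bp\,\theta(\bt)+\br$ for some substitution $\theta$, we may pass to $\bv+\bp\,\theta(\bs)$. Derivability of $\bq^{(m)}\preceq\bu^{(m)}$ from $\Sigma_0$ means that, starting from $\bu^{(m)}$, a finite chain of such steps produces a term containing the word $\bq^{(m)}$. Let $\mathcal I$ be the set of all terms reachable from $\bu^{(m)}$ in this way. I will show by induction along the chain that every term in $\mathcal I$ satisfies an invariant that fails for any term containing $\bq^{(m)}$. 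The invariant: every word $\bw$ of the term either lies in $\mathbb G^{odd}_{\bu^{(m)}}$ (a product of two variables joined by an odd path in the $(2m+1)$-cycle), or has length at least $2$ and $c(\bw)\neq c(\bq^{(m)})$, or, more generally, $\bw$ does not contain all $2m+1$ variables $x_1,\dots,x_{2m+1}$ each exactly once.

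The inductive step splits into two cases.

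\begin{enumerate}
\item For an inequality $\bs\preceq\bt$ in $\Sigma\setminus\Omega$, the hypothesis says $\bu^{(m)}$ is $\bt$-free. I would argue that this freeness persists along the chain. Every term in $\mathcal I$ is $\bu^{(m)}$ plus words that never create a new subterm instance of $\bt$, since added words are products of longer length or odd-path pairs. Strictly, one must check that any subterm instance $\bp\,\theta(\bt)$ of a term in $\mathcal I$ already sits inside $\bu^{(m)}$, up to the added words, which is where the invariant is used. So such inequalities either never apply or add nothing new.
\item For $\sigma_k$ with $k<m$, an instance $\theta(\bu^{(k)})$ inside the current term forces the images $\theta(x_i)$ to be arranged in a closed walk of length $2k+1$ in the graph $\mathbb G$ of the current term. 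The deduced word is $\theta(x_1)\cdots\theta(x_{2k+1})$. Because the graph $\mathbb G_{\bu^{(m)}}$ is an odd cycle of length $2m+1>2k+1$, every closed walk of length $2k+1$ in it is bipartite-type: it must repeat vertices. So the resulting word contains some variable twice and cannot equal $\bq^{(m)}$; nor can it later be combined into $\bq^{(m)}$.
\end{enumerate}

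Hence $\bq^{(m)}$ never appears, so $\sigma_m$ is not a consequence of $\Sigma_0$, contradicting that $\Sigma_0$ is a basis. Therefore $S$ is nonfinitely based.

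The main obstacle is making the invariant precise and stable. I must show that words added by $\Sigma\setminus\Omega$ and by small $\sigma_k$ do not alter $L_2$, or at least keep the odd-cycle structure of $\mathbb G$ (edges only within $\mathbb G^{odd}$ pairs). Otherwise a later $\sigma_k$ application might find a short odd closed walk. I would first try the invariant that $L_2$ of every term in $\mathcal I$ is contained in $\mathbb G^{odd}_{\bu^{(m)}}$ restricted so that no odd closed walk of length $<2m+1$ exists. Alternatively, I would try the stronger and simpler invariant that $L_2$ stays equal to $\bu^{(m)}$, using that the words deduced by $\sigma_k$ have length $2k+1\geq3$.
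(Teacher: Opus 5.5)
You have the reduction right: compactness gives a finite $\Sigma_0\subseteq\Sigma$, you choose $m$ with $\sigma_m\in\Sigma$ beyond every $\sigma_k\in\Sigma_0$, and derivability of $\sigma_m$ amounts to reaching a term containing $\bq^{(m)}$ from $\bu^{(m)}$ by forward elementary steps. The gap is in the inductive part.

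Your invariant is not a usable induction hypothesis, since its last disjunct only says $\bw\neq\bq^{(m)}$. The key claim of your first case, that $\bt$-freeness persists along the chain, is neither proved nor a consequence of the hypothesis, which concerns $\bu^{(m)}$ alone. Once any word has been adjoined, say a word of length $2k+1\geq 3$ produced by some $\sigma_k$, nothing prevents the enlarged term from containing an instance of some $\bt$ with $\mathbf{s}\preceq\bt$ in $\Sigma_0\setminus\Omega$. For example, take a $\bt$ all of whose words have length at least $3$; $\bu^{(m)}$ is trivially $\bt$-free, but a longer term need not be. Your second case also rests on a false picture. The cycle $\mathbb G_{\bu^{(m)}}$ has no closed walk of odd length $2k+1<2m+1$ at all, because an odd closed walk in a loopless graph contains an odd cycle of no greater length. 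So there is no resulting word with a repeated variable, and the claim that such words cannot later be combined into $\bq^{(m)}$ was unsupported in any case.

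The missing idea, which makes any invariant unnecessary, is that no elementary step from $\Sigma_0$ applies to $\bu^{(m)}$ at all. For $\mathbf{s}\preceq\bt$ in $\Sigma_0\setminus\Omega$ this is exactly the freeness hypothesis. For $\sigma_k$ with $k<m$, suppose $\bu^{(m)}=\bp\,\theta(\bu^{(k)})+\br$. Every word of $\bu^{(m)}$ has length $2$, while every word of $\bp\,\theta(x_i)\theta(x_{i+1})$ has length at least $2$. Hence $\bp$ is empty and each $\theta(x_i)$ is a sum of variables. Choosing $y_i$ in $\theta(x_i)$ yields a closed walk $y_1,\dots,y_{2k+1},y_1$ of odd length $2k+1<2m+1$ in the $(2m+1)$-cycle, which is impossible. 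So $\bu^{(m)}$ is $\bu^{(k)}$-free, and the closure of $\bu^{(m)}$ under steps from $\Sigma_0$ is $\bu^{(m)}$ itself. Since $\bq^{(m)}$, of length $2m+1\geq 3$, is not a summand of $\bu^{(m)}$, $\sigma_m$ is not derivable from $\Sigma_0$. The paper quotes this theorem without proof; with this observation in place of your invariant, your outline becomes a complete argument.
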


The following characterization of the identities of $S_c(ab)$ is also taken from \cite{lry}.

\begin{lem}\label{lem01}
Let $\bq\preceq \bu$ be a nontrivial inequality such that
$\bu=\bu_1+\bu_2+\cdots+\bu_n$ with $\bu_i, \bq \in X^+_c$ for $1\leq i \leq n$.
Then $\bq\preceq \bu$ is satisfied by $S_c(ab)$ if and only if $\bu$ and $\bq$ satisfy one of the following conditions\up:
\begin{enumerate}[$(\rm i)$]
\item  $\ell(\bu_i)\geq 3$ for some $\bu_i\in \bu$;

\item  $c(L_1(\bu))\cap c(L_2(\bu))\neq \emptyset$;

\item  the graph ${\mathbb G}_{\bu}$ contains an odd cycle;

\item  $\bq\in {\mathbb G}_{\bu}^{odd}$.
\end{enumerate}
\end{lem}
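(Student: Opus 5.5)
We prove both directions by analysing a homomorphism $\varphi:P_f(X_c^+)\to S_c(ab)$. Recall that $S_c(ab)=\{a,b,ab,0\}$, that $0$ is the top of the natural order, and that a sum of nonzero elements is nonzero only if all summands coincide. Hence $\bq\preceq\bu$ holds exactly when, for every assignment $\varphi$ of the variables, $\varphi(\bu)\neq 0$ implies $\varphi(\bq)=\varphi(\bu)$. Moreover $\varphi(\bu)\neq0$ forces every $\varphi(\bu_i)$ to be one and the same nonzero element $e\in\{a,b,ab\}$.

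\emph{Sufficiency.} Fix $\varphi$ with $\varphi(\bu)=e\neq0$ and check each condition in turn.
\begin{enumerate}[(a)]
\item Under (i), some $\bu_i$ has length at least $3$. Since every variable takes a value of length at least $1$, we get $\varphi(\bu_i)=0$, which is impossible.
\item Under (ii), some variable $x$ occurs both as a summand $x$ of $\bu$ and inside a summand of length $2$. Then $\varphi(x)=e$ while $e$ is also a product of two nonzero elements. This forces $e=ab$, and a factor equal to $ab$ in a product of length $2$ gives $0$, a contradiction.
\item Under (iii), if $L_2(\bu)\neq\emptyset$ then $e=ab$. Every edge $xy$ of ${\mathbb G}_\bu$ then has $\{\varphi(x),\varphi(y)\}=\{a,b\}$, and a loop $x^2$ gives $a^2=0$ or $b^2=0$. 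So $\varphi$ properly $2$-colours ${\mathbb G}_\bu$, which is impossible when there is an odd cycle.
\item Under (iv), we may assume $\bu$ contains a length-$2$ word, since otherwise ${\mathbb G}_\bu$ is empty. As in (c), $\varphi$ $2$-colours ${\mathbb G}_\bu$ by $a,b$. If $\bq=xy$ with an odd path from $x$ to $y$, then $x$ and $y$ receive different colours, so $\varphi(\bq)=ab=e$.
\end{enumerate}

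\emph{Necessity.} Suppose (i)--(iv) all fail. We build $\varphi$ with $\varphi(\bu)\neq0$ and $\varphi(\bq)\neq\varphi(\bu)$. Since (i) fails, all $\bu_i$ have length $1$ or $2$. Two cases arise.
\begin{enumerate}[(a)]
\item If $L_2(\bu)=\emptyset$, then $\bu$ is a sum of variables. Set these variables to $a$. If $\bq$ contains a variable outside $c(\bu)$, set that variable to $b$, and set every remaining variable to $a$. Then $\varphi(\bu)=a$. Nontriviality means $\bq$ is not a single variable of $\bu$, so $\varphi(\bq)\neq a$: either $\bq$ has length at least $2$, or $\bq$ is a variable outside $c(\bu)$ and receives $b$.
\item Otherwise, since (iii) fails, ${\mathbb G}_\bu$ is bipartite and loop-free. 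Each connected component admits two $2$-colourings by $a,b$, which gives $\varphi(\bu_i)=ab$ on all length-$2$ words. Since (ii) fails, the variables of $L_1(\bu)$ lie outside ${\mathbb G}_\bu$. Set them to $ab$, so that $\varphi(\bu)=ab$ throughout. It remains to choose the colourings on components, and the values on variables of $\bq$ not in $c(\bu)$, so that $\varphi(\bq)\neq ab$.
\end{enumerate}

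The main obstacle is this last choice, which requires a case analysis on $\bq$.
\begin{enumerate}[(a)]
\item If $\ell(\bq)\geq3$, then $\varphi(\bq)=0$ automatically.
\item If $\bq=x$ is a variable, it is not in $L_1(\bu)$ by nontriviality. Hence it either lies in ${\mathbb G}_\bu$, where it receives $a$ or $b$, or it is new, and we give it $a$.
\item If $\bq=x^2$, then $\varphi(\bq)\in\{a^2,b^2,(ab)^2\}=\{0\}$.
\item If $\bq=xy$ with $x\neq y$, we need $\varphi(x)$ and $\varphi(y)$ not to be $a,b$ in some order. If $x$ or $y$ is in $L_1(\bu)$, the product involves $ab$ and is $0$. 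If one of them is new, we give it the same colour as the other, or $a$ if both are new. If both lie in ${\mathbb G}_\bu$ but in different components, we flip one component's colouring so they get equal colours. If they lie in the same component, then (iv) failing means every path between them is even, so the bipartition gives them equal colours.
\end{enumerate}
In every case $\varphi(\bq)\in\{a^2,b^2,0,a,b\}$, which differs from $ab$, so $\bq\preceq\bu$ fails. This completes both directions.
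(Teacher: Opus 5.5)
Your proof is correct. The paper gives no proof of this lemma; it is imported from \cite{lry}, so there is no in-paper argument to compare against. Your direct analysis of assignments into $S_c(ab)=\{a,b,ab,0\}$ is a complete, self-contained verification of both directions. The key step is that $\varphi(\bu)\neq 0$ forces all summands to share one nonzero value, and, when $L_2(\bu)\neq\emptyset$, forces that value to be $ab$ and makes $\varphi$ a proper $\{a,b\}$-colouring of $\mathbb G_{\bu}$.

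Two conventions you rely on implicitly are worth stating explicitly.

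\emph{Loops are odd cycles.} In (iii), a loop $x^2$ must count as an odd cycle. Otherwise $y\preceq x^2$ would contradict the lemma: it holds in $S_c(ab)$ because every square there is $0$, yet it satisfies none of (i)--(iv). This reading matches how the paper later uses (iii) in Propositions~\ref{proab0m0} and~\ref{prop:strictcontains}.

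\emph{The case $x=y$ in (iv) is vacuous.} In your sufficiency argument for (iv), if $\bq=x^2$ with an odd path or closed walk from $x$ to itself, the step ``$x$ and $y$ receive different colours'' is a contradiction rather than a computation. So no assignment with $\varphi(\bu)\neq 0$ exists, and the inequality holds trivially.
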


We shall also use the following identity criteria for the three ai-semirings $M_2$, $D_2$ and $T_2$, obtained in \cite{sr}.

\begin{lem}\label{lem:small-tests}
Let $\bu\approx \bu+\bq$ be a nontrivial ai-semiring identity such that
$\bu=\bu_1+\cdots+\bu_n$, where $\bu_i, \bq\in X^+$, $1\leq i \leq n$. Then
\begin{itemize}
\item[$(1)$] $\bq\preceq \bu$ holds in $M_2$ if and only if $c(\bq)\subseteq c(\bu)$.

\item[$(2)$] $\bq\preceq \bu$ holds in $D_2$ if and only if $c(\bu_i)\subseteq c(\bq)$ for some $\bu_i \in \bu$.

\item[$(3)$] $\bq\preceq \bu$ holds in $T_2$ if and only if $\ell(\bu_i)\geq 2$ for some $\bu_i \in \bu$.
\end{itemize}
\end{lem}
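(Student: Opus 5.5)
Since Lemma~\ref{lem:small-tests} is quoted from \cite{sr}, we only sketch a direct verification; each part is a short evaluation argument on a two-element semiring. We take the standard presentations of the three semirings on $\{0,1\}$ from \cite{sr}, which we assume without checking against that paper. For $M_2$, both operations are $\min$; for $D_2$, addition is $\max$ and multiplication is $\min$; for $T_2$, addition has $0$ absorbing (so $0$ is the top of the natural order) and every product equals $0$. If the presentations in \cite{sr} differ, the same method applies to the actual tables. In each case we compute the value of a word and of a term under an arbitrary assignment $X\to\{0,1\}$. Then we decide when $\bq+\bu$ and $\bu$ always take the same value. By the reduction in Section~2 it suffices to treat a single word $\bq$ on the left.

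For $M_2$, a word evaluates to the minimum of its variables. Since $+$ is also $\min$, the term $\bu$ evaluates to $\min_{x\in c(\bu)}x$. Hence $\bq\preceq\bu$ means $\min(\bq,\bu)=\bu$ for every assignment, i.e.\ $\min_{c(\bu)}\leq\min_{c(\bq)}$ always. This clearly holds when $c(\bq)\subseteq c(\bu)$. If some $x\in c(\bq)\setminus c(\bu)$, put $x=0$ and all other variables equal to $1$; this violates the inequality.

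For $D_2$, a word still evaluates to a minimum, but $\bu$ evaluates to $\max_i\min_{c(\bu_i)}$. The inequality now says $\mathrm{val}(\bq)\leq\mathrm{val}(\bu)$ numerically. It fails exactly when some assignment gives $\bq=1$ and every $\bu_i=0$. The canonical test assignment sends $c(\bq)$ to $1$ and everything else to $0$. It makes $\bq=1$, and it makes $\bu_i=1$ precisely when $c(\bu_i)\subseteq c(\bq)$. Conversely, any assignment with $\bq=1$ is $1$ on $c(\bq)$, so any $\bu_i$ with $c(\bu_i)\subseteq c(\bq)$ is also $1$. This yields~(2).

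For $T_2$, if some $\ell(\bu_i)\geq 2$, then $\bu_i$ always evaluates to the absorbing top $0$. Hence $\bu=0$ and $\bu+\bq=\bu$, so the inequality holds. If instead every $\bu_i$ is a single variable, we use nontriviality, which forces $\bq\notin\bu$. If $\ell(\bq)\geq2$, assign $1$ to every variable: then $\bu=1$ and $\bq=0$, so $\bu+\bq=0\neq\bu$. If $\bq=x$ is a variable not among the $\bu_i$, assign $x=0$ and everything else $1$. Either way the inequality fails, giving~(3).

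The only step needing care is fixing the correct Cayley tables of $M_2$, $D_2$ and $T_2$ and the direction of the natural order. This matters especially for $T_2$, where one must use that $0$ is the top. After that, all three parts reduce to the single-assignment counterexamples above.
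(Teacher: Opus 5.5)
Your argument is correct. The paper gives no proof of this lemma at all: it is quoted from \cite{sr}. So your direct evaluation on the two-element Cayley tables is not a competing route but a self-contained substitute for the citation. Each part does reduce to a single test assignment exactly as you describe, with nontriviality ($\bq\notin\bu$) needed only for the converse in~(3).

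The only thing your version must supply that the citation hides is the choice of tables. Your hedge there is unnecessary, because among two-element ai-semirings the statement itself pins the tables down:
\begin{itemize}
\item Part~(3) applied to $y\preceq xy$ forces every product in $T_2$ to be the top of the natural order.
\item Part~(1) gives $x\approx x^2$ and $xy\approx x+y$, so the two operations of $M_2$ coincide.
\item Part~(2) gives $x\approx x^2$, $xy\approx yx$ and $x+xy\approx x$, so $D_2$ is the two-element lattice.
\end{itemize}
This agrees with the way the paper realizes these algebras, e.g.\ $\{a^2,a^2+a^2b^2\}\cong M_2$ in Proposition~\ref{pro25032102} and $\{a^2b^2,a^2b^2+a^2\}\cong D_2$ in Proposition~\ref{pro25032103}.
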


\subsection{Equational bases for the small varieties}

\begin{pro}\label{promd}
$\mathsf{V}(M_{2}, D_{2})$ is the ai-semiring variety defined by the following identities
\begin{align}
&x^{2} \approx x; \label{eq:1}\\
&xy \approx yx; \label{eq:2}\\
&xy \preceq x+y; \label{eq:3}\\
&xz \preceq x+yz. \label{eq:4}
\end{align}
\end{pro}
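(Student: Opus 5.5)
The plan is to show two things: that $M_2$ and $D_2$ satisfy \eqref{eq:1}--\eqref{eq:4} (soundness), and that every inequality $\bq\preceq\bu$ holding in both $M_2$ and $D_2$ is derivable from \eqref{eq:1}--\eqref{eq:4} (completeness). By the remarks in Section~2, an identity is equivalent to finitely many inequalities with a single word on the left. So it suffices to treat inequalities $\bq\preceq\bu$ with $\bq$ a word and $\bu=\bu_1+\cdots+\bu_n$.

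\emph{Soundness.} Both $M_2$ and $D_2$ are commutative and multiplicatively idempotent, so \eqref{eq:1} and \eqref{eq:2} hold. For \eqref{eq:3} and \eqref{eq:4} we check the two conditions of Lemma~\ref{lem:small-tests}. In \eqref{eq:3} we have $c(xy)=\{x,y\}=c(x+y)$, and the summand $x$ has content contained in $\{x,y\}$. In \eqref{eq:4} we have $c(xz)=\{x,z\}\subseteq\{x,y,z\}$, and the summand $x$ has content contained in $\{x,z\}$. So \eqref{eq:3} and \eqref{eq:4} hold in $M_2$ by Lemma~\ref{lem:small-tests}(1) and in $D_2$ by Lemma~\ref{lem:small-tests}(2).

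\emph{Completeness: setup.} Suppose $\bq\preceq\bu$ holds in $M_2$ and $D_2$. By Lemma~\ref{lem:small-tests}, $c(\bq)\subseteq c(\bu)$, and there is some $\bu_i$ with $c(\bu_i)\subseteq c(\bq)$. Using \eqref{eq:1} and \eqref{eq:2}, every word is provably equal to the product of the distinct variables in its content. Hence we may write
\[
\bq\approx \bu_i\, y_1y_2\cdots y_k,
\qquad \{y_1,\dots,y_k\}=c(\bq)\setminus c(\bu_i),
\]
where each $y_j$ occurs in some summand $\bw_j$ of $\bu$. We prove by induction on $j$ that $\bp_j:=\bu_i y_1\cdots y_j\preceq\bu$ is derivable. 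The base case $j=0$ is the trivial inequality $\bu_i\preceq\bu$.

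\emph{Completeness: inductive step.} Write $\bw_j\approx \bw' y_j$, where possibly $\bw'$ is empty. If $\bw'$ is nonempty, substitute $x\mapsto\bp_{j-1}$, $z\mapsto y_j$, $y\mapsto\bw'$ in \eqref{eq:4}. This gives
\[
\bp_j=\bp_{j-1}y_j\preceq \bp_{j-1}+\bw' y_j\approx \bp_{j-1}+\bw_j .
\]
If $\bw'$ is empty, so that $\bw_j=y_j$, instance \eqref{eq:3} gives $\bp_{j-1}y_j\preceq\bp_{j-1}+y_j$ instead. By the induction hypothesis $\bp_{j-1}\preceq\bu$, and $\bw_j\preceq\bu$ holds since $\bw_j$ is a summand of $\bu$. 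Because $\preceq$ is transitive and compatible with addition in ai-semiring equational logic, this yields $\bp_j\preceq\bu$. Taking $j=k$ gives $\bq\preceq\bu$.

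The only real point is this inductive step: one must see that \eqref{eq:4} is exactly the rule needed to adjoin one new variable at a time, borrowing it from a summand of $\bu$ while keeping the $D_2$-witness $\bu_i$ inside the left-hand word. Everything else is bookkeeping with \eqref{eq:1} and \eqref{eq:2}.
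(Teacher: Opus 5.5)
Your proof is correct, but it is organized differently from the paper's. The paper works top-down. It applies \eqref{eq:3} repeatedly to get $\bu_1\bu_2\cdots\bu_n\preceq\bu$, a single word whose content is all of $c(\bu)\supseteq c(\bq)$. Using \eqref{eq:1} and \eqref{eq:2} it rewrites this word as $p\bq$, where $p$ collects the surplus variables. A single instance of \eqref{eq:4}, namely $\bu_i\bq\preceq\bu_i+p\bq$, then discards the surplus while keeping the $D_2$-witness $\bu_i$, and $\bu_i\bq\approx\bq$ finishes the proof.

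You work bottom-up instead: you start from the witness $\bu_i$ and adjoin the missing variables of $\bq$ one at a time, borrowing each from one summand of $\bu$ via an instance of \eqref{eq:4}.

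What each buys:
\begin{itemize}
\item The paper's chain is shorter to write down; its only iteration is the routine one hidden in $\bu_1\cdots\bu_n\preceq\bu$.
\item Your induction uses only the summands that actually supply missing variables.
\item It handles the paper's separate case $\ell(\bq)=1$, and the case $c(\bq)=c(\bu)$ where $p$ would be empty, uniformly as $k=0$.
\item It shows that \eqref{eq:3} plays only a marginal role. In your degenerate case $\bw_j=y_j$ you could write $y_j\approx y_j^2$ and use \eqref{eq:4} again. More generally, substituting $y\mapsto z$ in \eqref{eq:4} and applying \eqref{eq:1} yields \eqref{eq:3}, so \eqref{eq:3} is redundant in the stated basis.
\end{itemize}
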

\begin{proof}
It is straightforward to verify that both $M_{2}$ and $D_{2}$ satisfy identities \eqref{eq:1}--\eqref{eq:4}.
To complete the proof, we need to show that every ai-semiring identity satisfied by both $M_{2}$ and $D_{2}$
can be derived from \eqref{eq:1}--\eqref{eq:4}.
Let $\bq \preceq \bu$ be such a nontrivial inequality,
where $\bu=\bu_1+\bu_2+\cdots+\bu_n$ with $\bu_i, \bq\in X^+$ for $1 \leq i \leq n$.
Then $c(\bq)\subseteq c(\bu)$; and there exists $\bu_i \in \bu$ such that $c(\bu_i)\subseteq c(\bq)$.

If $\ell(\bq)=1$, then $c(\bu_i)\subseteq c(\bq)$ and identity \eqref{eq:1} imply $\bu\succeq\bq$ is immediate.
Now suppose $\ell(\bq)\geq 2$. Let $m=\max\{m(x_s,\bq)\mid x_s\in X\}$. Then
\[
\bu \succeq \bu_i+m(\bu_1+\bu_2+\cdots+\bu_n)
\stackrel{\eqref{eq:3}}{\succeq} \bu_i+\bu_1^m \bu_2^m \cdots \bu_n^m
\stackrel{\eqref{eq:2}}{\approx} p\bq+\bu_i
\stackrel{\eqref{eq:4}}{\succeq} \bu_i\bq
\stackrel{\eqref{eq:1},\eqref{eq:2}}{\approx} \bq.
\]
Thus the inequality $\bu \succeq \bq$ is derived.
\end{proof}

\begin{pro}\label{promt}
$\mathsf{V}(M_{2}, T_{2})$ is the ai-semiring variety defined by identities \eqref{eq:2}, together with
\begin{align}
&x \preceq xy; \label{eq:5}\\
&xyz \preceq xy+z. \label{eq:6}
\end{align}
\end{pro}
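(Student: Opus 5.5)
We follow the pattern of the proof of Proposition~\ref{promd}. First we check soundness: $M_2$ and $T_2$ both satisfy \eqref{eq:2}, \eqref{eq:5} and \eqref{eq:6}. This is a direct check using Lemma~\ref{lem:small-tests}.
\begin{itemize}
\item For \eqref{eq:5}, $c(x)\subseteq c(xy)$ gives $M_2$, and $\ell(xy)=2$ gives $T_2$.
\item For \eqref{eq:6}, $c(xyz)=c(xy+z)$ gives $M_2$, and $\ell(xy)\ge 2$ gives $T_2$.
\end{itemize}
For completeness, let $\bq\preceq\bu$ be a nontrivial inequality holding in both $M_2$ and $T_2$, with $\bu=\bu_1+\cdots+\bu_n$. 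By Lemma~\ref{lem:small-tests}, $c(\bq)\subseteq c(\bu)$ and $\ell(\bu_i)\geq 2$ for some $\bu_i$. We must derive $\bq\preceq\bu$ from \eqref{eq:2}, \eqref{eq:5} and \eqref{eq:6}.

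The key observation is that \eqref{eq:5} lets us freely enlarge words downward: every word lies below any word containing it as a factor. Consequently \eqref{eq:5} and \eqref{eq:2} give $\bp\preceq\bp\bw$ for all words $\bp,\bw$. It is therefore enough to derive $\bu\succeq\bw$ for some word $\bw$ that has $\bq$ as a factor, say $\bw=\bq\bw'$; then $\bu\succeq\bw\succeq\bq$.

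To build a large word below $\bu$, we plan to use \eqref{eq:6} repeatedly in the form $xy+z\succeq xyz$. Write $\bu_i=\ba\bb$ with $\ba,\bb$ nonempty, which is possible since $\ell(\bu_i)\ge2$. Substituting $x\mapsto\ba$, $y\mapsto\bb$, $z\mapsto\bu_j$ gives $\bu_i+\bu_j\succeq\bu_i\bu_j$. The new word $\bu_i\bu_j$ again has length at least $2$, so we can iterate: absorbing all summands, any number of times each, yields $\bu\succeq\bu_i\bu_1^{m}\cdots\bu_n^{m}$ for any $m\ge1$. Choose $m\geq\max\{occ(x,\bq)\}$. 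Since $c(\bq)\subseteq c(\bu)$, every variable of $\bq$ occurs in the product at least as often as it does in $\bq$, so by \eqref{eq:2} this word has the form $\bq\bp$. Then \eqref{eq:5} gives $\bq\preceq\bq\bp\preceq\bu$.

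The main care point is the iteration step. We must check that each application of \eqref{eq:6} is a legitimate substitution instance, that is, that the accumulated word always splits as a product of two nonempty words. This holds because the accumulated word always contains $\bu_i$, of length at least $2$. We must also confirm that the ai-semiring axioms allow keeping $\bu$ on the left throughout, via $\bu\approx\bu+\bu_i\bu_j$ and its iterates. Both points are routine, so we do not expect a real obstacle. Finally, we note that the cases $\ell(\bq)=1$ and $\ell(\bq)\ge2$ need no separate treatment here, since \eqref{eq:5} handles both.
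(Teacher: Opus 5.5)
Your proposal is correct and is essentially the paper's proof: use \eqref{eq:6} repeatedly, starting from a summand of length at least $2$, to get $\bu\succeq\bu_i\bu_1^m\cdots\bu_n^m\approx\bq\bp$, and then apply \eqref{eq:5}. The paper treats $\ell(\bq)=1$ separately, using a single summand of length at least $2$ that contains $\bq$. Your uniform treatment is also valid, because $\bq$ already divides $\bu_1^m\cdots\bu_n^m$, so the extra factor $\bu_i$ makes $\bp$ nonempty.
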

\begin{proof}
It is straightforward to verify that both $M_{2}$ and $T_{2}$ satisfy identities \eqref{eq:2}, \eqref{eq:5}, and \eqref{eq:6}.
To complete the proof, we need to show that every ai-semiring identity satisfied by both $M_{2}$ and $T_{2}$
can be derived from \eqref{eq:2}, \eqref{eq:5}, and \eqref{eq:6}.
Let $\bq \preceq \bu$ be such a nontrivial inequality,
where $\bu=\bu_1+\bu_2+\cdots+\bu_n$ with $\bu_i, \bq\in X^+$ for $1 \leq i \leq n$.
Then $c(\bq)\subseteq c(\bu)$; and there exists $\bu_i \in \bu$ such that $\ell(\bu_i)\geq 2$.

\textbf{Case 1.} $\ell(\bq)=1$. Then there exists $\bu_j\in \bu$ such that $c(\bq)\subseteq c(\bu_j)$ and $\ell(\bu_j)\geq 2$.
Hence
\[
\bu \succeq \bu_j \stackrel{\eqref{eq:2}}{\approx} \bq p \stackrel{\eqref{eq:5}}{\succeq} \bq.
\]

\textbf{Case 2.} $\ell(\bq)\geq 2$. Take $m=\max\{m(x_s,\bq)\mid x_s\in X\}$. Then
\[
\bu \succeq m(\bu_i+\bu_1+\bu_2+\cdots+\bu_n)
\stackrel{\eqref{eq:6}}{\succeq} \bu_i^m \bu_1^m \bu_2^m \cdots \bu_n^m
\stackrel{\eqref{eq:2}}{\approx} \bq p
\stackrel{\eqref{eq:5}}{\succeq} \bq.
\]
This derives the inequality $\bu \succeq \bq$.
\end{proof}

\begin{pro}\label{promdt}
$\mathsf{V}(M_{2}, D_{2}, T_{2})$ is the ai-semiring variety defined by identities \eqref{eq:2}, \eqref{eq:4}, and \eqref{eq:6}, together with
\begin{align}
&x^{2}y \approx xy; \label{eq:7}\\
&x \preceq x^{2}. \label{eq:8}
\end{align}
\end{pro}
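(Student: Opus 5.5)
The plan is to check that $M_2$, $D_2$ and $T_2$ each satisfy \eqref{eq:2}, \eqref{eq:4}, \eqref{eq:6}, \eqref{eq:7} and \eqref{eq:8}, and then to derive every inequality valid in all three from these identities, in the same way as in Propositions~\ref{promd} and~\ref{promt}.

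\textbf{Soundness.} I would check each identity against the three criteria of Lemma~\ref{lem:small-tests}.
\begin{itemize}
\item For $x^2y\approx xy$, the two sides have the same content and both have length at least $2$, so all three tests are passed in both directions.
\item For $x\preceq x^2$, the content condition for $M_2$ holds, $c(x^2)\subseteq c(x)$ gives $D_2$, and $\ell(x^2)=2$ gives $T_2$.
\item For \eqref{eq:4}, the summand $x$ has content contained in $c(xz)$, and the summand $yz$ has length $2$.
\item For \eqref{eq:6}, the summand $xy$ has content in $c(xyz)$ and length $2$.
\item Commutativity is clear.
\end{itemize}

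\textbf{Completeness.} Let $\bq\preceq\bu$ be a nontrivial inequality satisfied by all three semirings, with $\bu=\bu_1+\cdots+\bu_n$. By Lemma~\ref{lem:small-tests} we get three facts:
\begin{itemize}
\item $c(\bq)\subseteq c(\bu)$;
\item some $\bu_i$ has $c(\bu_i)\subseteq c(\bq)$;
\item some $\bu_j$ has $\ell(\bu_j)\geq 2$.
\end{itemize}

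First I record a normal form consequence of \eqref{eq:2} and \eqref{eq:7}. Taking $y=x$ in \eqref{eq:7} gives $x^3\approx x^2$. Hence every word of length at least $2$ is equivalent to any other word of length at least $2$ with the same content.

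\textbf{Case $\ell(\bq)=1$.} Write $\bq=x$. Then $\bu_i=x^k$. If $k=1$ we are done. Otherwise $\bu_i\approx x^2\succeq x$ by the normal form and \eqref{eq:8}.

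\textbf{Case $\ell(\bq)\geq 2$.} Here I would first use \eqref{eq:6} repeatedly to absorb all summands into one long word. With $x,y$ standing for a split of $\bu_j$ and $z=\bu_k$, \eqref{eq:6} gives $\bu_j\bu_k\preceq\bu_j+\bu_k$. Iterating, with the accumulated product always of length at least $2$, yields
\[
\bu\succeq \bu_i+\bw,\qquad \bw=\bu_j\bu_1\cdots\bu_n .
\]
Here $c(\bw)=c(\bu)\supseteq c(\bq)$ and $\ell(\bw)\geq2$. By the normal form, $\bw\approx\bw\bq$. Applying \eqref{eq:4} with $x=\bu_i$, $y=\bw$ and $z=\bq$ gives
\[
\bu_i+\bw\approx\bu_i+\bw\bq\succeq\bu_i\bq .
\]
Finally, since $c(\bu_i)\subseteq c(\bq)$ and $\ell(\bq)\geq2$, the normal form gives $\bu_i\bq\approx\bq$. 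Hence $\bu\succeq\bq$.

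\textbf{Main obstacle.} The only delicate point is justifying the normal form carefully, that is, that \eqref{eq:7} lets us delete any repeated letter from a word of length at least $2$ without ever reaching a single letter. Beyond that, the single-letter case is exactly where \eqref{eq:8} is needed, because $x$ and $x^2$ are not identified.
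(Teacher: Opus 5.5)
Your proposal is correct and follows the paper's proof. Both arguments split on $\ell(\bq)$, use \eqref{eq:6} to fuse all summands into one long word, apply \eqref{eq:4} with $x=\bu_i$ to obtain $\bu_i\bq$, and collapse this to $\bq$ via \eqref{eq:7}. The only cosmetic difference is that you use the \eqref{eq:7}-normal form to get $\bw\approx\bw\bq$, whereas the paper raises every summand to the $m$-th power so that $\bq$ divides the product by commutativity alone.
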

\begin{proof}
It is straightforward to verify that $M_{2}$, $D_{2}$ and $T_{2}$ satisfy identities \eqref{eq:2}, \eqref{eq:4}, and \eqref{eq:6}--\eqref{eq:8}.
To complete the proof, we need to show that every ai-semiring identity satisfied by $M_{2}$, $D_{2}$ and $T_{2}$
can be derived from \eqref{eq:2}, \eqref{eq:4}, and \eqref{eq:6}--\eqref{eq:8}.
Let $\bq \preceq \bu$ be such a nontrivial inequality,
where $\bu=\bu_1+\bu_2+\cdots+\bu_n$ with $\bu_i, \bq\in X^+$ for $1 \leq i \leq n$.
Then $c(\bq)\subseteq c(\bu)$, and there exist $\bu_i, \bu_j \in \bu$ such that $c(\bu_i)\subseteq c(\bq)$ and $\ell(\bu_j)\geq 2$.

\textbf{Case 1.} $\ell(\bq)=1$. Then $c(\bq)=c(\bu_i)$; by identity \eqref{eq:7}, we may assume $\bu_i=\bq^2$.
Consequently,
\[
\bu \succeq \bu_i = \bq^2 \stackrel{\eqref{eq:8}}{\succeq} \bq.
\]

\textbf{Case 2.} $\ell(\bq)\geq 2$. Let $m=\max\{m(x_s,\bq)\mid x_s\in X\}$. Then
\begin{align*}
\bu
&\succeq m(\bu_j+\bu_1+\bu_2+\cdots+\bu_n)+\bu_i\\
&\succeq \bu_j^m\bu_1^m\bu_2^m\cdots\bu_n^m+\bu_i &&(\text{by}~\eqref{eq:6})\\
&\approx \bq p+\bu_i &&(\text{by}~\eqref{eq:2})\\
&\succeq \bu_i\bq &&(\text{by}~\eqref{eq:4})\\
&\approx \bq. &&(\text{by}~\eqref{eq:2},\eqref{eq:7})
\end{align*}
Thus the inequality $\bu \succeq \bq$ is derived.
\end{proof}

\begin{pro}\label{prodt}
$\mathsf{V}(D_{2}, T_{2})$ is the ai-semiring variety defined by identities \eqref{eq:2}, \eqref{eq:7}, and \eqref{eq:8}, together with
\begin{align}
&xt \preceq x+yz. \label{eq:9}
\end{align}
\end{pro}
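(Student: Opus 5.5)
Following the pattern of Propositions~\ref{promd}--\ref{promdt}, the plan has two parts: first check that $D_2$ and $T_2$ satisfy \eqref{eq:2}, \eqref{eq:7}, \eqref{eq:8} and \eqref{eq:9}, then show that every inequality holding in both can be derived from them.

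For soundness we use Lemma~\ref{lem:small-tests}. Identity \eqref{eq:2} is just commutativity. For \eqref{eq:7}, both sides have the same content and length at least $2$, so the inequalities in both directions hold in $D_2$ and $T_2$. For \eqref{eq:8}, the right side $x^2$ has content $\{x\}\subseteq c(x)$ and length $2$. For \eqref{eq:9}, the summand $x$ satisfies $c(x)\subseteq c(xt)$, which handles $D_2$, and the summand $yz$ has length $2$, which handles $T_2$.

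For completeness, let $\bq\preceq\bu$ be a nontrivial inequality satisfied by both $D_2$ and $T_2$. Then there is $\bu_i\in\bu$ with $c(\bu_i)\subseteq c(\bq)$, and there is $\bu_j\in\bu$ with $\ell(\bu_j)\ge 2$. There is no content condition, since $M_2$ is not a generator here; this is exactly why \eqref{eq:9} introduces the fresh variable $t$.

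\textbf{Case 1: $\ell(\bq)=1$.} Then $c(\bu_i)=c(\bq)=\{x\}$, so $\bu_i=x^k$. If $k\ge 2$, we rewrite it as $x^2$ by \eqref{eq:7} and conclude with \eqref{eq:8}. If $k=1$, then $\bu_i=\bq$ and the inequality is trivial.

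\textbf{Case 2: $\ell(\bq)\ge 2$.} Write $\bu_j=yz\bw$, with $\bw$ possibly empty. We treat $y$ and $z\bw$ as the two variables of \eqref{eq:9}; substitution is allowed because \eqref{eq:9} is an identity in variables. Substituting $x\mapsto\bu_i$, $y\mapsto y$, $z\mapsto z\bw$ and $t\mapsto \bp$ for an arbitrary word $\bp$ gives $\bu\succeq\bu_i+\bu_j\succeq\bu_i\bp$. It then remains to show $\bu_i\bp\approx\bq$ for a suitable $\bp$, using \eqref{eq:2} and \eqref{eq:7}. Since $c(\bu_i)\subseteq c(\bq)$, the natural choice is $\bp=\bq$: commutativity rearranges $\bu_i\bq$, and repeated use of \eqref{eq:7} removes the extra copies of variables of $\bu_i$, because each such variable already occurs in $\bq$.

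The main obstacle is justifying this collapse, since \eqref{eq:7} as written only removes a square in the form $x^2y\approx xy$ with $y$ a word. I would first derive $x^2\bw\approx x\bw$ for any nonempty word $\bw$ by substitution and commutativity. Then, for a variable $x$ occurring at least twice in $\bu_i\bq$, the rest of the word is nonempty because $\ell(\bq)\ge 2$, so exponents can be reduced step by step to the multiplicities in $\bq$. Some care is needed when $\bq$ itself contains a square, but since \eqref{eq:7} is an equality, exponents $\ge 1$ can be adjusted freely as long as the word keeps length at least $2$. This makes $\bu_i\bq\approx\bq$ derivable, which completes the proof.
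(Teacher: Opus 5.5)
Your proposal is correct and follows the paper's proof essentially verbatim. It uses the same two cases on $\ell(\bq)$: in the first, $\bu\succeq\bu_i\approx\bq^2\succeq\bq$ via \eqref{eq:7} and \eqref{eq:8}; in the second, $\bu\succeq\bu_i+\bu_j\succeq\bu_i\bq\approx\bq$ via \eqref{eq:9}, \eqref{eq:2} and \eqref{eq:7}. You also correctly supply details the paper leaves implicit: the degenerate case $\bu_i=\bq$, the substitution $z\mapsto z\bw$ into \eqref{eq:9}, and why the exponent reduction by \eqref{eq:7} always has a nonempty cofactor because $\ell(\bq)\geq 2$.
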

\begin{proof}
It is straightforward to verify that both $D_{2}$ and $T_{2}$ satisfy identities \eqref{eq:2} and \eqref{eq:7}--\eqref{eq:9}.
To complete the proof, we need to show that every ai-semiring identity satisfied by both $D_{2}$ and $T_{2}$
can be derived from \eqref{eq:2} and \eqref{eq:7}--\eqref{eq:9}.
Let $\bq \preceq \bu$ be such a nontrivial inequality,
where $\bu=\bu_1+\bu_2+\cdots+\bu_n$ with $\bu_i, \bq\in X^+$ for $1 \leq i \leq n$.
Then there exist $\bu_i, \bu_j \in \bu$ such that $c(\bu_i)\subseteq c(\bq)$ and $\ell(\bu_j)\geq 2$.

\textbf{Case 1.} $\ell(\bq)=1$. Since $c(\bu_i)\subseteq c(\bq)$, identity \eqref{eq:7} implies $\bu_i=\bq^2$.
Consequently,
\[
\bu \succeq \bu_i = \bq^2 \stackrel{\eqref{eq:8}}{\succeq} \bq.
\]

\textbf{Case 2.} $\ell(\bq)\geq 2$. Then
\[
\bu \succeq \bu_i+\bu_j \stackrel{\eqref{eq:9}}{\succeq} \bu_i\bq
\stackrel{\eqref{eq:2},\eqref{eq:7}}{\approx} \bq.
\]
Thus the inequality $\bu \succeq \bq$ is derived.
\end{proof}

The following description of
$\mathsf{V}(M_2^0,T_2^0)$ was obtained in
\cite[Proposition~7.4]{yrzs}.

\begin{lem}\label{m0t0}
The variety
$\mathsf{V}(M_2^0,T_2^0)=\mathsf{V}(S_{(4,431)})$
is defined by identities \eqref{eq:2} and
\eqref{eq:6}--\eqref{eq:8}.
\end{lem}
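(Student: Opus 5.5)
The plan is the same two-step scheme as in Propositions~\ref{promd}--\ref{prodt}: soundness, then completeness via an identity criterion for the generators.

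\textbf{Soundness.} I would check that $M_2^0$ and $T_2^0$ each satisfy \eqref{eq:2} and \eqref{eq:6}--\eqref{eq:8}. Both are small, so this is a finite check. Alternatively one can observe that adjoining a new zero (absorbing for both operations) preserves identities $\bu\approx\bv$ with $c(\bu)=c(\bv)$. Each of \eqref{eq:2}, \eqref{eq:6}, \eqref{eq:7} and \eqref{eq:8} is such a regular identity. Since $M_2$ and $T_2$ satisfy them by Lemma~\ref{lem:small-tests}, the lifted semirings satisfy them too.

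\textbf{Identity criterion.} Next I would describe when $\bq\preceq\bu$ holds in $M_2^0$ and in $T_2^0$. Consider an assignment sending some variable to the new $0$. It forces $\bq\mapsto 0$ precisely when that variable lies in $c(\bq)$, and then $\bu\mapsto0$ requires every summand to meet it. Hence the extra condition contributed by the zero is $c(\bu)\subseteq c(\bq)$; otherwise we may send a variable of $c(\bu)\setminus c(\bq)$ to $0$ and the rest suitably. Combining this with Lemma~\ref{lem:small-tests}, I expect:
\begin{itemize}
\item $\bq\preceq\bu$ holds in $M_2^0$ if and only if $c(\bq)=c(\bu)$;
\item $\bq\preceq\bu$ holds in $T_2^0$ if and only if $c(\bu)\subseteq c(\bq)$, and either some $\bu_i$ has length $\ge2$ or $\bu\approx\bq$ trivially holds in $T_2$ in the $0$-free part.
\end{itemize}
Making the $T_2^0$ case precise is the main obstacle. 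I would handle it by evaluating on the explicit elements of $T_2^0$ and using assignments to $0$ on variables outside $c(\bq)$. Together the two criteria give, for a nontrivial inequality valid in both generators, $c(\bq)=c(\bu)$ and some $\ell(\bu_j)\ge2$.

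\textbf{Derivation.} Given this, the derivation mirrors Case~2 of Proposition~\ref{promdt}, but without \eqref{eq:4}.

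If $\ell(\bq)=1$, then $c(\bu)=c(\bq)=\{x\}$. Some summand has length at least $2$, so by \eqref{eq:7} it equals $x^2$. Then \eqref{eq:8} gives $\bu\succeq x^2\succeq x=\bq$.

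If $\ell(\bq)\ge2$, iterate \eqref{eq:6}, as in Proposition~\ref{promt}, to get
\[
\bu\succeq \bu_j^m\bu_1^m\cdots\bu_n^m .
\]
This word has content $c(\bu)=c(\bq)$. By \eqref{eq:2} and \eqref{eq:7}, both it and $\bq$ (of length $\ge2$) reduce to the product of the variables of $c(\bq)$, each at exponent $1$ or $2$. Such multiplicities are interchangeable using \eqref{eq:7} once some variable appears with exponent $\ge2$ or the length is $\ge2$. Hence the two words are equal modulo \eqref{eq:2} and \eqref{eq:7}, and $\bu\succeq\bq$ is derived.

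The delicate point in this step is normalizing exponents via \eqref{eq:7} when $\bq$ is square-free.
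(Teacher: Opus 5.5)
\textbf{Gap: wrong identity criterion for the zero extensions.} Your derivation step is sound, but the identity criterion it rests on is wrong, so the completeness argument misses valid inequalities. (The paper gives no proof of this lemma; it quotes Proposition~7.4 of its reference on $S_{(4,431)}$.)

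In this paper the new element $0$ of $S^0$ is the additive identity (the bottom of the natural order) and a multiplicative zero. It is not an additively absorbing top. This is why $D_2\in\mathsf{V}(M_2^0)$ in Proposition~\ref{pro25032110}, and it is the ``equational characterization of the zero extensions'' used in the proof of Proposition~\ref{proab0m0}. The convention matters. With an absorbing top, $M_2^0$ would be a three-element chain with $\cdot=+$, hence in $\mathsf{V}(M_2)$. Both generators would then satisfy $x\preceq xy$, which fails in $D_2$ although $D_2$ satisfies \eqref{eq:2} and \eqref{eq:6}--\eqref{eq:8}. So under that reading the lemma would be false.

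With the correct zero, sending a variable of $c(\bq)$ to $0$ makes $\bq$ the bottom and imposes nothing. Sending a variable outside $c(\bq)$ to $0$ only deletes the summands of $\bu$ that contain it. Sending all variables outside $c(\bq)$ to $0$ gives
\[
S^0\models\bq\preceq\bu \iff S\models\bq\preceq D_{\bq}(\bu)\ \text{and}\ D_{\bq}(\bu)\neq\emptyset .
\]
Hence a nontrivial $\bq\preceq\bu$ holds in both $M_2^0$ and $T_2^0$ if and only if $c(D_{\bq}(\bu))=c(\bq)$ and some summand of $D_{\bq}(\bu)$ has length at least $2$. Your conditions $c(\bq)=c(\bu)$ and $c(\bu)\subseteq c(\bq)$ are not necessary. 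Your justification, sending a variable of $c(\bu)\setminus c(\bq)$ to $0$, produces no failing assignment under either reading of the zero. Under your absorbing reading the zero would instead contribute $c(\bq)\subseteq c(\bu)$.

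\textbf{Concrete failures and the repair.} The inequalities $x\preceq x^2+y$ and $xy\preceq x^2+y+z$ hold in both generators. Your Case~1 assumes $c(\bu)=\{x\}$, so it does not cover the first. In Case~2, iterating \eqref{eq:6} over all summands puts $z$ into the product word, and \eqref{eq:2}, \eqref{eq:7} cannot remove it.

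The repair is short. Since $\bu\succeq D_{\bq}(\bu)$ trivially, run your two cases on $D_{\bq}(\bu)$ instead of $\bu$. There $c(D_{\bq}(\bu))=c(\bq)$ and a summand of length at least $2$ exists, so your normalization by \eqref{eq:7} and \eqref{eq:8} goes through. This is the device used in the proof of Proposition~\ref{proab0m0}.

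\textbf{Soundness alternative.} Your alternative soundness argument is tied to the wrong convention too. Adjoining a bottom zero does not preserve regular identities: $xy\approx x+y$ holds in $M_2$ but fails in $M_2^0$. What it does preserve are inequalities $\bq\preceq\bu$ with $D_{\bq}(\bu)=\bu$, and these cover \eqref{eq:2} and \eqref{eq:6}--\eqref{eq:8}. So the direct finite check you list first is the safe route.
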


The next equational basis is due to \cite{rlyc}.

\begin{pro}\label{promt0}
The variety $\mathsf{V}(M_2,T_2^0)$ is defined by identities
\eqref{eq:2} and \eqref{eq:6}--\eqref{eq:8}, together with
\begin{align}
xyz \preceq xy+xyzt. \label{eq:10}
\end{align}
\end{pro}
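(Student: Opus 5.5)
We derive every inequality satisfied by both $M_2$ and $T_2^0$ from \eqref{eq:2}, \eqref{eq:6}--\eqref{eq:8} and \eqref{eq:10}, following the pattern of Propositions~\ref{promt} and \ref{promdt}.

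\textbf{Step 1: the basis holds.} First we check that $M_2$ and $T_2^0$ satisfy \eqref{eq:2}, \eqref{eq:6}--\eqref{eq:8} and \eqref{eq:10}. By Lemma~\ref{m0t0} the first four identities hold already in $\mathsf{V}(M_2^0,T_2^0)$, which contains both semirings. For \eqref{eq:10} we check directly: it holds in $M_2$ by Lemma~\ref{lem:small-tests}(1), since $c(xyz)\subseteq c(xy+xyzt)$. In $T_2^0$ the only new case is an assignment sending $t$ to the adjoined $0$, and then the summand $xy$ of length $2$ with content inside $c(xyz)$ takes care of it.

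\textbf{Step 2: an identity criterion for $T_2^0$.} Next we need such a criterion. Assuming, as in \cite{rlyc}, that the adjoined element is an additive identity and a multiplicative zero, we expect the following. A nontrivial inequality $\bq\preceq\bu$ holds in $T_2^0$ if and only if some $\bu_j\in D_{\bq}(\bu)$ has $\ell(\bu_j)\geq 2$.

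To prove this, sending the variables outside $c(\bq)$ to $0$ kills exactly the summands not in $D_{\bq}(\bu)$. What remains is an inequality evaluated in $T_2$, so Lemma~\ref{lem:small-tests}(3) applies, provided $D_{\bq}(\bu)\neq\emptyset$. If $D_{\bq}(\bu)=\emptyset$, the evaluation separates $\bq$ from $0$.

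Establishing this criterion carefully, including the degenerate cases, is the step I expect to be the main obstacle. If \cite{rlyc} already states it, I would simply quote it.

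\textbf{Step 3: the derivation.} Now let $\bq\preceq\bu$ be a nontrivial inequality holding in both semirings. Lemma~\ref{lem:small-tests}(1) gives $c(\bq)\subseteq c(\bu)$, and Step 2 gives some $\bu_j$ with $c(\bu_j)\subseteq c(\bq)$ and $\ell(\bu_j)\geq 2$.

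A useful consequence of \eqref{eq:2} and \eqref{eq:7} is that two words of length at least $2$ with the same content are equal. It follows that $\bu_j\bq\approx\bq$ whenever $\ell(\bq)\geq 2$.

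\textbf{Case 1: $\ell(\bq)=1$.} Then $c(\bu_j)=c(\bq)$, so $\bu_j\approx\bq^2$ by \eqref{eq:7}. Therefore $\bu\succeq\bq^2\succeq\bq$ by \eqref{eq:8}.

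\textbf{Case 2: $\ell(\bq)\geq 2$.} Let $m$ be the largest multiplicity of a variable in $\bq$. Exactly as in Proposition~\ref{promt}, repeated use of \eqref{eq:6} gives
\[
\bu\succeq \bu_j+\bu_j^m\bu_1^m\cdots\bu_n^m\approx \bu_j+\bu_j\bq\bp
\]
for a suitable word $\bp$, using \eqref{eq:2}, \eqref{eq:7} and $c(\bq)\subseteq c(\bu)$.

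Write $\bu_j=\bw_1\bw_2$ as a product of two nonempty words, which is possible since $\ell(\bu_j)\geq 2$. Substitute $x\mapsto\bw_1$, $y\mapsto\bw_2$, $z\mapsto\bq$, $t\mapsto\bp$ in \eqref{eq:10}. This yields
\[
\bu_j+\bu_j\bq\bp\succeq \bu_j\bq\approx\bq.
\]
Hence $\bu\succeq\bq$, which completes the derivation.
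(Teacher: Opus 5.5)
Your proof is correct. The paper itself gives no argument here; it quotes the basis from \cite{rlyc}, so your route is a self-contained completeness proof in the style of Propositions~\ref{promt} and \ref{promdt}.

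The chain is sound:
\begin{itemize}
\item the $M_2$ test gives $c(\bq)\subseteq c(\bu)$;
\item the zero-extension reduction gives a summand $\bu_j$ with $\ell(\bu_j)\geq 2$ and $c(\bu_j)\subseteq c(\bq)$;
\item \eqref{eq:6} builds the long product;
\item \eqref{eq:10}, with $xy\mapsto\bu_j$, $z\mapsto\bq$, $t\mapsto\bp$, strips off the surplus factor.
\end{itemize}
The last step is exactly the job \eqref{eq:10} has to do. Without it you are in $\mathsf{V}(M_2^0,T_2^0)$, and $M_2^0$ fails \eqref{eq:10}. Your fact that words of length at least $2$ with equal content coincide modulo \eqref{eq:2} and \eqref{eq:7} is right, and so is the separate treatment of $\ell(\bq)=1$.

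What your version buys is independence from the external reference and an explicit identity criterion for $T_2^0$. Step~2 is not really an obstacle: it is the same zero-extension reduction the paper invokes in Proposition~\ref{proab0m0}.

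One small repair is needed. The word $\bp$ can be empty. For example, take $\bu=xy+z$, $\bq=xyz$ and $m=1$. Then $\bu_j\bu_1\bu_2=xy\cdot xy\cdot z=\bu_j\bq$, so the substitution $t\mapsto\bp$ is not available. In that case, however, you already have $\bu\succeq\bu_j\bq\approx\bq$. Alternatively, use the exponent $m+1$ instead of $m$, which forces $\bp$ to be nonempty.
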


\begin{pro}\label{prom0t}
$\mathsf{V}(M^0_{2}, T_{2})$ is the ai-semiring variety defined by identities \eqref{eq:2}, \eqref{eq:7}, and \eqref{eq:8}, together with
\begin{align}
&zt \preceq xy+z+t. \label{eq:11}
\end{align}
\end{pro}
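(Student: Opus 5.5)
The plan is the same as for Propositions~\ref{promd}--\ref{prodt}: check soundness, then derive every inequality $\bq\preceq\bu$ holding in both $M_2^0$ and $T_2$. For this I need an identity criterion for $M_2^0$. Lemma~\ref{lem:small-tests} does not supply one, so I will take it from \cite{sr} if it is recorded there; otherwise I will prove it directly from the Cayley table of $M_2^0$. The criterion I expect is:
\begin{quote}
$\bq\preceq\bu$ holds in $M_2^0$ if and only if there are summands $\bu_{i_1},\ldots,\bu_{i_k}$ of $\bu$ with $c(\bq)=c(\bu_{i_1})\cup\cdots\cup c(\bu_{i_k})$.
\end{quote}
Proving this criterion is the step I expect to be the main obstacle; everything after it is routine.

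\textbf{Soundness.} Both algebras satisfy \eqref{eq:2}, \eqref{eq:7} and \eqref{eq:8}; for $M_2^0$ this follows from Lemma~\ref{m0t0}. For \eqref{eq:11}:
\begin{itemize}
\item in $T_2$ it holds by Lemma~\ref{lem:small-tests}(3), because the right-hand side contains the word $xy$ of length $2$;
\item in $M_2^0$ it holds by the criterion, because $c(zt)=c(z)\cup c(t)$.
\end{itemize}

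\textbf{Consequences of \eqref{eq:7}.} Putting $y=x$ in \eqref{eq:7} gives $x^3\approx x^2$. Together with \eqref{eq:2}, this means every word whose content has at least two variables is determined by its content. Every word in a single variable $x$ equals $x$ or $x^2$.

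\textbf{Completeness, Case~1: $\ell(\bq)=1$, say $\bq=x$.} The criterion gives a summand $\bu_i$ with $c(\bu_i)=\{x\}$, so $\bu_i\in\{x,x^2\}$. Then $\bu\succeq\bu_i\succeq x$, using \eqref{eq:8} when $\bu_i=x^2$.

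\textbf{Completeness, Case~2: $\ell(\bq)\ge 2$.} Let $\bu_{i_1},\ldots,\bu_{i_k}$ be the summands given by the criterion. By Lemma~\ref{lem:small-tests}(3) there is also a summand $\bu_j$ with $\ell(\bu_j)\ge 2$, so we can write $\bu_j=\bp_1\bp_2$ with $\bp_1,\bp_2$ nonempty words.
\begin{itemize}
\item Substitute $x\mapsto\bp_1$, $y\mapsto\bp_2$, $z\mapsto\bu_{i_1}$, $t\mapsto\bu_{i_2}$ in \eqref{eq:11}. This gives $\bu\succeq\bu+\bu_{i_1}\bu_{i_2}$.
\item Repeat with $z\mapsto\bu_{i_1}\bu_{i_2}$ and $t\mapsto\bu_{i_3}$, and so on. By induction we obtain $\bu\succeq\bp$, where $\bp=\bu_{i_1}\cdots\bu_{i_k}$.
\item If $k=1$, take $\bp=\bu_{i_1}$ directly.
\end{itemize}
In every case $c(\bp)=c(\bq)$.

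If $|c(\bq)|\ge 2$, then $\bp\approx\bq$ by \eqref{eq:2} and \eqref{eq:7}, and we are done.

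If $c(\bq)=\{x\}$, then $\bq\approx x^2$. Either some summand equals $x^2$ and we are done, or some summand equals $x$. In the latter case, apply \eqref{eq:11} with $z=t=x$ and $xy\mapsto\bu_j$ to get $x^2\preceq\bu$.

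This completes the derivation of $\bq\preceq\bu$ from \eqref{eq:2}, \eqref{eq:7}, \eqref{eq:8} and \eqref{eq:11}.
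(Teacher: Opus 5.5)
Your proof is correct and follows essentially the paper's route: the same covering criterion for $M_2^0$ (which the paper also uses without proof), and \eqref{eq:11} with $xy$ instantiated by a summand of length at least two, so that the covering summands can be multiplied together. The paper packages this more uniformly. One application of \eqref{eq:11} gives $\bu^2\preceq\bu$, whence $\bu\succeq\bu^{2k}\succeq(\bu_{i_1}\cdots\bu_{i_k})^2\approx\bq^2\succeq\bq$, and this squaring trick removes your case split on $\ell(\bq)$ and on $|c(\bq)|$.
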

\begin{proof}
It is straightforward to verify that both $M_2^0$ and $T_2$
satisfy \eqref{eq:2}, \eqref{eq:7}, \eqref{eq:8}, and \eqref{eq:11}.
Conversely, let $\bq\preceq\bu$ be a nontrivial inequality
satisfied by both algebras, where $\bu=\bu_1+\cdots+\bu_n$.
By their equational characterizations, there exist summands
$\bu_{i_1},\ldots,\bu_{i_k}$ and $\bu_j$ such that
\[
c(\bq)=\bigcup_{h=1}^k c(\bu_{i_h}),
\qquad \ell(\bu_j)\geq 2.
\]
Writing $\bu_j=\mathbf{r}\mathbf{s}$, we obtain from
\eqref{eq:11} that
\[
\bu^2\preceq\mathbf{r}\mathbf{s}+\bu+\bu\approx\bu,
\]
and hence $\bu^m\preceq\bu$ for every $m\geq1$.
Therefore,
\[
\bu
\succeq\bu^{2k}
\succeq(\bu_{i_1}\cdots\bu_{i_k})^2
\stackrel{\eqref{eq:2},\eqref{eq:7}}{\approx}\bq^2
\stackrel{\eqref{eq:8}}{\succeq}\bq.
\]
Thus $\bq\preceq\bu$ follows from
\eqref{eq:2}, \eqref{eq:7}, \eqref{eq:8}, and \eqref{eq:11}.
Thus the inequality $\bu \succeq \bq$ is derived.
\end{proof}

\subsection{The interval below $\mathsf{V}(M_2^0,T_2^0)$}

\begin{pro}\label{prosmallexclusions}
Let $\mathcal V\leq\mathsf{V}(M_2^0,T_2^0)$.
Then the following statements hold.
\begin{enumerate}
\item
$T_2^0\notin\mathcal V$ if and only if $\mathcal V$ satisfies
\begin{equation}\label{eq:omitT20}
x+y^2\succeq x^2.
\end{equation}
Equivalently, $\mathcal V\leq\mathsf{V}(M_2^0,T_2)$.

\item
$M_2^0\notin\mathcal V$ if and only if $\mathcal V$ satisfies
\begin{equation}\label{eq:omitM20}
x^2+yz\succeq xy.
\end{equation}
Equivalently, $\mathcal V\leq\mathsf{V}(M_2,T_2^0)$.

\item
$D_2\notin\mathcal V$ if and only if $\mathcal V$ satisfies
\begin{equation}\label{eq:omitD2-small}
xy\approx x^2+y^2.
\end{equation}
Equivalently, $\mathcal V\leq\mathsf{V}(M_2,T_2)$.
\end{enumerate}
\end{pro}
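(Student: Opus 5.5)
Each of the three parts has the same shape: a variety $\mathcal V\le\mathsf{V}(M_2^0,T_2^0)$ omits a given small semiring $A$ if and only if it satisfies an identity $\tau_A$, and this happens if and only if $\mathcal V$ lies below a specific join $\mathcal W_A$. I would prove the cycle of implications
\[
A\notin\mathcal V\ \Rightarrow\ \mathcal V\le\mathcal W_A\ \Rightarrow\ \mathcal V\models\tau_A\ \Rightarrow\ A\notin\mathcal V .
\]

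The last implication is a one-line countersubstitution.
\begin{itemize}
\item In $T_2^0$, send $x\mapsto a$ and $y\mapsto 0$, where $a$ is the element with $a^2\neq a$ (the non-identity element of $T_2$ with $a^2$ the absorbing element, and $0$ adjoined as a multiplicative identity). Then $x+y^2$ and $x^2$ take different values.
\item In $M_2^0$, take $x=1$ (the element with $x^2$ large) and $y,z$ suitable, so that \eqref{eq:omitM20} fails.
\item In $D_2$, \eqref{eq:omitD2-small} fails under $x\mapsto0$, $y\mapsto1$, since $D_2$ is a distributive lattice where $xy$ is the meet and $x+y$ the join.
\end{itemize}
The middle implication amounts to checking that $\tau_A$ holds in the generators of $\mathcal W_A$. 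This is routine via Lemma~\ref{lem:small-tests}, or directly on the two- and three-element tables. For example, $x+y^2\succeq x^2$ holds in $T_2$ because $y^2$ is the top element, and in $M_2^0$ by a case check.

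The real content is the first implication. Here I would use the known structure of $\mathcal L(\mathsf{V}(M_2^0,T_2^0))$, with finite basis given by Lemma~\ref{m0t0}. The cleanest route avoids describing the whole lattice and argues equationally instead. Write $\Sigma$ for the identities \eqref{eq:2} and \eqref{eq:6}--\eqref{eq:8}, and let $\mathcal V\models\Sigma$.

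I would show that if $\mathcal V$ fails $\tau_A$, then $A$ lies in $\mathcal V$. To do this, take a semiring $S\in\mathcal V$ and elements witnessing the failure, and generate a subsemiring. Then use \eqref{eq:7}, \eqref{eq:8} and \eqref{eq:6} to force the generated subalgebra, or a quotient of it by an appropriate ideal-type congruence (collapsing everything above a chosen element), to be isomorphic to $A$.

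For $T_2^0$, a failure of \eqref{eq:omitT20} gives $a,b$ with $a^2\not\le a+b^2$. Take $c=a+b^2$ and $d=a^2+c$ with $d\ne c$. The relations $x^2y=xy$ and $x\le x^2$ should make $\{c,d\}$ behave like $T_2$: since $c\le c^2$, the element $c^2$ lies above $d$, and collapsing the filter above $d$ realizes $T_2$. The adjoined $0$ must then be recovered from the elements $b$ and $b^2$. This identification is where I expect the main obstacle: finding the exact subalgebra or quotient isomorphic to $T_2^0$ (respectively $M_2^0$) requires a careful choice of elements.

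A safer fallback is to argue on identities. I would show that $\Sigma\cup\{\tau_A\}$ proves every identity holding in $\mathcal W_A$, using the explicit bases already established:
\begin{itemize}
\item Proposition~\ref{prom0t} for $\mathsf{V}(M_2^0,T_2)$, where \eqref{eq:omitT20} must derive \eqref{eq:11};
\item Proposition~\ref{promt0} for $\mathsf{V}(M_2,T_2^0)$, where \eqref{eq:omitM20} must derive \eqref{eq:10};
\item Proposition~\ref{promt} for $\mathsf{V}(M_2,T_2)$, where \eqref{eq:omitD2-small} must derive \eqref{eq:5}.
\end{itemize}
Each derivation is short. For instance, $zt\preceq z+t+xy$ follows by substituting into $x+y^2\succeq x^2$ and using \eqref{eq:6}. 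Similarly, $x\preceq xy$ follows from $xy\approx x^2+y^2\succeq x^2\succeq x$.

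This fallback yields $\mathcal V\models\tau_A\Rightarrow\mathcal V\le\mathcal W_A$. The remaining implication, $A\notin\mathcal V\Rightarrow\mathcal V\models\tau_A$, still needs the structural step above. Alternatively, it would follow from the fact that the subvarieties of $\mathsf{V}(M_2^0,T_2^0)$ are exactly the joins of the varieties generated by subsets of $\{M_2,M_2^0,T_2,T_2^0,D_2\}$. I would cite this from \cite{yrzs} if it is available there.
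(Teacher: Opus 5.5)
Your overall architecture is the paper's. You check that $A$ fails $\tau_A$ and that the generators of $\mathcal W_A$ satisfy it, you show $A\notin\mathcal V\Rightarrow\mathcal V\models\tau_A$, and you then derive the basis of $\mathcal W_A$ from $\tau_A$. Your fallback derivations of \eqref{eq:11}, \eqref{eq:10} and \eqref{eq:5} are the paper's second step and are fine.

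\textbf{The gap.} The implication that carries all the content, $A\notin\mathcal V\Rightarrow\mathcal V\models\tau_A$, is never proved. For $T_2^0$ you stop at an unfinished sketch, and for $M_2^0$ and $D_2$ you give nothing. Your alternative appeal to a description of $\mathcal L(\mathsf{V}(M_2^0,T_2^0))$ in \cite{yrzs} depends on something you have not checked. Inside this paper that description (Proposition~\ref{pro:five-varieties} together with the fibres) is deduced \emph{from} the present proposition, so it cannot be used as input. As written, you have only established $\mathcal V\models\tau_A\Leftrightarrow\mathcal V\le\mathcal W_A$ and $\mathcal V\models\tau_A\Rightarrow A\notin\mathcal V$, not the converse.

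\textbf{The missing idea.} No quotient or filter collapse is needed. A failure of $\tau_A$ produces a chain in $S$ that is already a subalgebra isomorphic to $A$. All verifications use only $x^3\approx x^2$, $(xy)^2\approx xy$, $(x+y)^2\approx x^2+y^2\succeq xy$ and $xyz\preceq xy+z$.
\begin{itemize}
\item For $T_2^0$: your $c=a+b^2$ and $d=a^2+b^2$ are the correct upper elements, with $c^2=cd=d^2=d$. The adjoined zero is $b^2c=b^2+ab$, and $b^2+ab\le a+b^2<a^2+b^2$. The first step is strict because squaring fixes $b^2+ab$ but sends $a+b^2$ to $a^2+b^2$. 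Moreover $b^2+ab$ is idempotent and absorbs $c$ and $d$.
\item For $M_2^0$: from $ab\not\le a^2+bc$, take $abc<a^2+bc<a^2+bc+ab$. Here $abc\le a^2+bc$ follows from $xyz\preceq xy+z$ and $x\preceq x^2$. This step is strict, since multiplying an equality $abc=a^2+bc$ by $a^2+bc+ab$ would give $abc=a^2+bc+ab$. The two upper elements are idempotent with product the larger one, and $abc$ is idempotent and absorbs both.
\item For $D_2$: since $xy\preceq x^2+y^2$ already holds, a failure gives $ab<a^2+b^2$. Then $\{ab,a^2+b^2\}$ is a copy of $D_2$, because both elements are idempotent and $ab(a^2+b^2)=ab$.
\end{itemize}

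A minor correction: in this paper the $0$ of $S^0$ is the additive identity and multiplicative zero, not a multiplicative identity. Your $T_2^0$ substitution still works. For $M_2^0=\{0<p<q\}$, a failing assignment for \eqref{eq:omitM20} is $x\mapsto p$, $y\mapsto q$, $z\mapsto 0$; your description of $x$ as ``the element with $x^2$ large'' does not fit, since every element of $M_2^0$ is idempotent.
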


\begin{proof}
By \eqref{eq:2} and \eqref{eq:6}--\eqref{eq:8},
every member of $\mathsf{V}(M_2^0,T_2^0)$ satisfies
\[
(xy)^2\approx xy,\qquad
(x+y)^2\approx x^2+y^2\succeq xy.
\]
It is straightforward to verify that the three varieties
in (1)--(3) satisfy the corresponding identities, whereas
$T_2^0$, $M_2^0$, and $D_2$, respectively, fail them.
We prove the remaining implications.

$(1)$ Suppose that $T_2^0\notin\mathcal V$.
If $\mathcal V$ fails \eqref{eq:omitT20}, there exist
$S\in\mathcal V$ and $a,b\in S$ such that
$a^2\not\leq a+b^2$.
Using \eqref{eq:2} and \eqref{eq:6}--\eqref{eq:8}, we obtain
\[
b^2+ab\leq a+b^2< a^2+b^2.
\]
The first inequality is also strict, since
\[
(b^2+ab)^2=b^2+ab,\qquad
(a+b^2)^2=a^2+b^2.
\]
A direct verification shows that
\[
\{b^2+ab,\ a+b^2,\ a^2+b^2\}
\]
is a subalgebra of $S$ isomorphic to $T_2^0$, a contradiction.
Thus $\mathcal V$ satisfies \eqref{eq:omitT20}.

Moreover,
\[
xy+z+t
\approx (xy)^2+z+t
\stackrel{\eqref{eq:omitT20}}{\succeq}
(z+t)^2
\succeq zt.
\]
Hence $\mathcal V$ satisfies \eqref{eq:11}.
By Proposition~\ref{prom0t},
$\mathcal V\leq\mathsf{V}(M_2^0,T_2)$.

$(2)$ Suppose that $M_2^0\notin\mathcal V$.
If $\mathcal V$ fails \eqref{eq:omitM20}, there exist
$S\in\mathcal V$ and $a,b,c\in S$ such that
$ab\not\leq a^2+bc$.
Then
\[
abc\leq a^2+bc< a^2+bc+ab.
\]
The first inequality is strict: if $abc=a^2+bc$,
multiplication by $a^2+bc+ab$ gives
$abc=a^2+bc+ab$, a contradiction.
Using \eqref{eq:2} and \eqref{eq:6}--\eqref{eq:8},
one verifies that
\[
\{abc,\ a^2+bc,\ a^2+bc+ab\}
\]
is a subalgebra of $S$ isomorphic to $M_2^0$.
This is again a contradiction.
Thus $\mathcal V$ satisfies \eqref{eq:omitM20}.

Consequently,
\[
xy+xyzt
\approx (xy)^2+z(xyt)
\stackrel{\eqref{eq:omitM20}}{\succeq}
(xy)z
\approx xyz.
\]
Hence $\mathcal V$ satisfies \eqref{eq:10}.
By Proposition~\ref{promt0},
$\mathcal V\leq\mathsf{V}(M_2,T_2^0)$.

$(3)$ Suppose that $D_2\notin\mathcal V$.
If $\mathcal V$ fails \eqref{eq:omitD2-small}, there exist
$S\in\mathcal V$ and $a,b\in S$ such that
\[
ab< a^2+b^2.
\]
Both elements are multiplicatively idempotent, and
\[
ab(a^2+b^2)=ab.
\]
Thus $\{ab,a^2+b^2\}$ is a subalgebra of $S$
isomorphic to $D_2$, a contradiction.
Therefore $\mathcal V$ satisfies \eqref{eq:omitD2-small}.
Now
\[
xy
\stackrel{\eqref{eq:omitD2-small}}{\approx}
x^2+y^2
\stackrel{\eqref{eq:8}}{\succeq}
x.
\]
By Proposition~\ref{promt},
$\mathcal V\leq\mathsf{V}(M_2,T_2)$.
\end{proof}

\begin{pro}\label{pro:five-varieties}
The interval
\[
[\mathsf{V}(M_2,T_2),\mathsf{V}(M_2^0,T_2^0)]
\]
consists precisely of the following five varieties:
\[
\begin{gathered}
\mathsf{V}(M_2,T_2),\qquad
\mathsf{V}(M_2,T_2,D_2),\\
\mathsf{V}(M_2^0,T_2),\qquad
\mathsf{V}(M_2,T_2^0),\qquad
\mathsf{V}(M_2^0,T_2^0).
\end{gathered}
\]
All five varieties are finitely based.
\end{pro}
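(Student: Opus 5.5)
The plan is to classify each $\mathcal V$ in the interval by which of $T_2^0$, $M_2^0$, $D_2$ it contains, using Proposition~\ref{prosmallexclusions}. Throughout, fix $\mathcal V$ with $\mathsf{V}(M_2,T_2)\leq\mathcal V\leq\mathsf{V}(M_2^0,T_2^0)$.

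First, the five listed varieties are finitely based. $\mathsf{V}(M_2,T_2)$ has a finite basis by Proposition~\ref{promt}. $\mathsf{V}(M_2^0,T_2)$ is handled by Proposition~\ref{prom0t}, $\mathsf{V}(M_2,T_2^0)$ by Proposition~\ref{promt0}, and $\mathsf{V}(M_2^0,T_2^0)$ by Lemma~\ref{m0t0}. For $\mathsf{V}(M_2,T_2,D_2)$ we use Proposition~\ref{promdt}, since $\mathsf{V}(M_2,D_2,T_2)=\mathsf{V}(M_2,T_2,D_2)$. We also need these five to lie in the interval. This is clear once we check that $D_2\in\mathsf{V}(M_2^0,T_2^0)$, for instance by verifying that $D_2$ satisfies \eqref{eq:2} and \eqref{eq:6}--\eqref{eq:8}. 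We must also check that the five varieties are pairwise distinct. For this, the identities \eqref{eq:omitT20}, \eqref{eq:omitM20} and \eqref{eq:omitD2-small} separate them, since each fails in exactly the generators claimed by Proposition~\ref{prosmallexclusions}.

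Second, we show that every $\mathcal V$ in the interval is one of the five. We split according to the containments.
\begin{itemize}
\item If $T_2^0\notin\mathcal V$ and $M_2^0\notin\mathcal V$, then parts (1) and (2) of Proposition~\ref{prosmallexclusions} give $\mathcal V\leq\mathsf{V}(M_2^0,T_2)\cap\mathsf{V}(M_2,T_2^0)$. We claim this intersection is $\mathsf{V}(M_2,T_2,D_2)$. To see this, we derive from \eqref{eq:omitT20}, \eqref{eq:omitM20}, \eqref{eq:2}, \eqref{eq:6}--\eqref{eq:8} the remaining basis identity \eqref{eq:4} of Proposition~\ref{promdt}, namely $xz\preceq x+yz$. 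A candidate derivation is $x+yz\approx x+(yz)^2\succeq x^2+\cdots$, using \eqref{eq:omitT20} to pass from $x$ to $x^2$ and then \eqref{eq:omitM20} to get $xz$ from $x^2+zy$. Then if $D_2\notin\mathcal V$, part (3) gives $\mathcal V=\mathsf{V}(M_2,T_2)$. Otherwise $\mathcal V=\mathsf{V}(M_2,T_2,D_2)$.
\item If $T_2^0\in\mathcal V$ and $M_2^0\notin\mathcal V$, then $\mathsf{V}(M_2,T_2^0)\leq\mathcal V\leq\mathsf{V}(M_2,T_2^0)$ by part (2), so equality holds.
\item If $M_2^0\in\mathcal V$ and $T_2^0\notin\mathcal V$, then part (1) gives $\mathcal V=\mathsf{V}(M_2^0,T_2)$.
\item If both $M_2^0$ and $T_2^0$ lie in $\mathcal V$, then $\mathcal V=\mathsf{V}(M_2^0,T_2^0)$.
\end{itemize}
These cases presuppose that $D_2$ belongs to $\mathsf{V}(M_2^0,T_2)$ and to $\mathsf{V}(M_2,T_2^0)$. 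Otherwise $\mathsf{V}(M_2^0,T_2)$ would not contain $D_2$, while some $\mathcal V$ generated together with $D_2$ could fall in a sixth class. So I will check directly that $D_2$ satisfies \eqref{eq:11} and \eqref{eq:10}. For $D_2=\{0<1\}$ with $0\cdot0=0$, this means verifying $zt\preceq xy+z+t$: the right side is $\geq z$ or $t$, so any zero value on the right side forces a zero value on the left. The identity \eqref{eq:10} is checked similarly.

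The main obstacle I expect is the identification $\mathsf{V}(M_2^0,T_2)\cap\mathsf{V}(M_2,T_2^0)=\mathsf{V}(M_2,T_2,D_2)$, namely the syntactic derivation of \eqref{eq:4}. If the derivation does not come out directly, I would instead argue semantically. An identity $\bq\preceq\bu$ of $\mathsf{V}(M_2,D_2,T_2)$ satisfies the three conditions of Lemma~\ref{lem:small-tests}. From these I would derive it inside the intersection by mimicking Case~2 of the proof of Proposition~\ref{promdt}, replacing the use of \eqref{eq:4} by \eqref{eq:omitM20} applied to $\bu_i^2+\bq p$.
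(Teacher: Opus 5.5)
Your proposal is correct and follows the paper's proof. It uses the same four-way split on membership of $M_2^0$ and $T_2^0$, and in the last case the same derivation $x+yz\succeq x^2+yz\succeq xz$ of \eqref{eq:4} from \eqref{eq:omitT20}, \eqref{eq:7} and \eqref{eq:omitM20}, followed by Proposition~\ref{promdt} and part~(3) of Proposition~\ref{prosmallexclusions}. Your extra check that $D_2$ lies in $\mathsf{V}(M_2^0,T_2)$ and $\mathsf{V}(M_2,T_2^0)$ is harmless but unnecessary: in the two mixed cases the sandwich $\mathsf{V}(M_2^0,T_2)\leq\mathcal V\leq\mathsf{V}(M_2^0,T_2)$ (resp.\ its analogue) already forces equality, whatever else $\mathcal V$ contains.
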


\begin{proof}
Let
\[
\mathsf{V}(M_2,T_2)\leq\mathcal V
\leq\mathsf{V}(M_2^0,T_2^0).
\]
We distinguish four cases.

\textbf{Case 1.} $M_2^0,T_2^0\in\mathcal V$.
Then $\mathcal V=\mathsf{V}(M_2^0,T_2^0)$.

\textbf{Case 2.} $M_2^0\in\mathcal V$ and
$T_2^0\notin\mathcal V$.
By Proposition~\ref{prosmallexclusions},
\[
\mathsf{V}(M_2^0,T_2)\leq\mathcal V
\leq\mathsf{V}(M_2^0,T_2).
\]
Hence $\mathcal V=\mathsf{V}(M_2^0,T_2)$.

\textbf{Case 3.} $M_2^0\notin\mathcal V$ and
$T_2^0\in\mathcal V$.
Similarly, $\mathcal V=\mathsf{V}(M_2,T_2^0)$.

\textbf{Case 4.} $M_2^0,T_2^0\notin\mathcal V$.
By Proposition~\ref{prosmallexclusions},
$\mathcal V$ satisfies \eqref{eq:omitT20}
and \eqref{eq:omitM20}. Therefore
\[
x+yz
\stackrel{\eqref{eq:omitT20},\eqref{eq:7}}{\succeq}
x^2+yz
\stackrel{\eqref{eq:omitM20}}{\succeq}
xz.
\]
By Proposition~\ref{promdt},
\[
\mathcal V\leq\mathsf{V}(M_2,T_2,D_2).
\]
If $D_2\in\mathcal V$, then
$\mathcal V=\mathsf{V}(M_2,T_2,D_2)$.
Otherwise, Proposition~\ref{prosmallexclusions} gives
$\mathcal V\leq\mathsf{V}(M_2,T_2)$, and hence
$\mathcal V=\mathsf{V}(M_2,T_2)$.

All five displayed varieties belong to the interval,
and their distinctness follows from
Proposition~\ref{prosmallexclusions}.
Their finite bases were given in
Propositions~\ref{promt}, \ref{promdt}, \ref{prom0t},
and \ref{promt0}, and Lemma~\ref{m0t0}.
\end{proof}

\begin{pro}\label{pro:three-varieties}
The interval
\[
[\mathsf{V}(T_2),\mathsf{V}(T_2^0)]
\]
consists precisely of
\[
\mathsf{V}(T_2),\qquad
\mathsf{V}(T_2,D_2),\qquad
\mathsf{V}(T_2^0).
\]
In particular, all three varieties are finitely based.
\end{pro}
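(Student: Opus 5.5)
The plan is to mirror the proof of Proposition~\ref{pro:five-varieties}, with $\mathsf{V}(T_2^0)$ in place of $\mathsf{V}(M_2^0,T_2^0)$. Let $\mathsf{V}(T_2)\leq\mathcal V\leq\mathsf{V}(T_2^0)$. We first record that $\mathsf{V}(T_2^0)\leq\mathsf{V}(M_2^0,T_2^0)$, so Lemma~\ref{m0t0} gives \eqref{eq:2} and \eqref{eq:6}--\eqref{eq:8}, and Proposition~\ref{prosmallexclusions} applies to $\mathcal V$. We also need that $M_2\notin\mathsf{V}(T_2^0)$, so that $M_2,M_2^0\notin\mathcal V$. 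For this we exhibit an identity of $T_2^0$ that fails in $M_2$. A natural candidate is $x\preceq xy+y$, or $xy\preceq x+yz$ up to \eqref{eq:7}, checked on the three-element chain $T_2^0$. Alternatively, we can cite the known basis of $\mathsf{V}(T_2^0)$ from \cite{sr} or \cite{yrzs} if it is available.

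\textbf{Case 1.} $T_2^0\in\mathcal V$. Then $\mathcal V=\mathsf{V}(T_2^0)$ directly.

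\textbf{Case 2.} $T_2^0\notin\mathcal V$. Since also $M_2^0\notin\mathcal V$, Proposition~\ref{prosmallexclusions}(1),(2) yields \eqref{eq:omitT20} and \eqref{eq:omitM20}. Exactly as in Case~4 of Proposition~\ref{pro:five-varieties}, this gives \eqref{eq:4}, so $\mathcal V\leq\mathsf{V}(M_2,D_2,T_2)$ by Proposition~\ref{promdt}. Next we combine this with the identity separating $M_2$ from $T_2^0$ found above, and aim to derive \eqref{eq:9}, i.e.\ $xt\preceq x+yz$. One attempt is
\[
x+yz\succeq x^2+yz\succeq xz\ \text{(by \eqref{eq:4})},
\]
after which the $M_2$-failing identity should let us replace the letter $z$ coming from $yz$ by an arbitrary $t$. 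Once \eqref{eq:9} is available, Proposition~\ref{prodt} gives $\mathcal V\leq\mathsf{V}(D_2,T_2)$. Finally, if $D_2\in\mathcal V$ then $\mathcal V=\mathsf{V}(D_2,T_2)$. Otherwise Proposition~\ref{prosmallexclusions}(3) gives \eqref{eq:omitD2-small}, hence $xy\approx x^2+y^2\succeq x$, so \eqref{eq:5} holds and $\mathcal V\leq\mathsf{V}(T_2)$. The missing step here is that $\mathcal V$ satisfies $xyz\preceq xy+z$; since \eqref{eq:6} holds in $\mathcal V$, Proposition~\ref{promt} then gives $\mathcal V\leq\mathsf{V}(M_2,T_2)$. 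Because $M_2\notin\mathcal V$ and $\mathcal V$ has the idempotent-square form, one concludes $\mathcal V=\mathsf{V}(T_2)$. Alternatively, argue that $\mathsf{V}(M_2,T_2)\cap\mathsf{V}(T_2^0)=\mathsf{V}(T_2)$ by Lemma~\ref{lem:small-tests}: an identity of $T_2$ fails in $M_2$ only by content, and $T_2^0$ already controls content.

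For distinctness: $D_2\notin\mathsf{V}(T_2)$ because $T_2$ satisfies \eqref{eq:omitD2-small}, and $T_2^0$ fails \eqref{eq:omitT20} while $\mathsf{V}(D_2,T_2)$ satisfies it. Finite bases are given by Proposition~\ref{prodt}, by \eqref{eq:2} together with \eqref{eq:5} and $xy\approx x^2+y^2$ for $\mathsf{V}(T_2)$ (or cite \cite{sr}), and by the known basis for $T_2^0$ (cite \cite{yrzs}). The main obstacle is identifying the right identity of $T_2^0$ that excludes $M_2$ and using it to upgrade \eqref{eq:4} to \eqref{eq:9}. I would first try $x\preceq x^2y$-type identities, verifying them against the Cayley table of $T_2^0$.
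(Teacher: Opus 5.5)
There is a genuine gap. Your overall structure matches the paper's: split on whether $T_2^0\in\mathcal V$, push $\mathcal V$ into $\mathsf{V}(T_2,D_2)$ via \eqref{eq:9} and Proposition~\ref{prodt}, then split on $D_2$. But the central step of Case~2 is never carried out, and the ingredients you propose for it are wrong.

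In $T_2^0$ the adjoined $0$ is the bottom and a multiplicative zero. So your candidates $x\preceq xy+y$, $xy\preceq x+yz$ and $x\preceq x^2y$ all fail in $T_2^0$: put $0$ on the variable occurring only on the right, and the non-top element of $T_2$ elsewhere. All three also hold in $M_2$. By Lemma~\ref{lem:small-tests}(1), an inequality can fail in $M_2$ only if some variable on the left is absent on the right. The separating identity that works, and the one the paper uses, is $xyz\preceq xy$.

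Your continuation via \eqref{eq:omitM20} and \eqref{eq:4} also stalls. From $x+yz\succeq xz$, the identity $xy\succeq xyz$ only produces $xzt$. Getting back to $xt$ would need $xt\preceq xzt$, which fails in $D_2$, and $D_2$ may lie in $\mathcal V$.

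The missing idea is to stop one step earlier. Apply \eqref{eq:omitT20} with $y\mapsto yz$; since $(yz)^2\approx yz$ by \eqref{eq:2} and \eqref{eq:7}, this gives $x+yz\succeq x^2$. Then $x^2\succeq x^2t\approx xt$ by $xyz\preceq xy$ and \eqref{eq:7}. That is \eqref{eq:9}. With this route you need neither \eqref{eq:omitM20}, nor $M_2\notin\mathcal V$, nor Proposition~\ref{promdt}.

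The final step is not yet an argument either. After \eqref{eq:omitD2-small} gives \eqref{eq:5}, Proposition~\ref{promt} only yields $\mathcal V\leq\mathsf{V}(M_2,T_2)$. Neither ``$M_2\notin\mathcal V$'' nor ``$T_2^0$ controls content'' closes the gap on its own. A one-line fix:
\begin{itemize}
\item $x\preceq x(yz)\preceq yz$, using \eqref{eq:5} and then \eqref{eq:9} with $x\mapsto yz$, $t\mapsto x$.
\item Substituting $(x,y,z)\mapsto(xy,z,t)$ gives $xy\preceq zt$; by symmetry $xy\approx zt$.
\item By Lemma~\ref{lem:small-tests}(3), the two identities $xy\approx zt$ and $x\preceq yz$ define $\mathsf{V}(T_2)$.
\end{itemize}
The paper's own last line likewise cites Proposition~\ref{promt}, which is a basis of $\mathsf{V}(M_2,T_2)$, not of $\mathsf{V}(T_2)$, so it needs the same patch.

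Finally, your proposed basis for $\mathsf{V}(T_2)$, namely \eqref{eq:2}, \eqref{eq:5} and $xy\approx x^2+y^2$, is satisfied by $M_2$. It therefore defines a strictly larger variety. Use $\{xy\approx zt,\ x\preceq yz\}$ instead, or cite the known result.
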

\begin{proof}
Let
$\mathsf{V}(T_2)\leq\mathcal V\leq\mathsf{V}(T_2^0)$.
If $T_2^0\in\mathcal V$, then
$\mathcal V=\mathsf{V}(T_2^0)$. Suppose that
$T_2^0\notin\mathcal V$. By
Proposition~\ref{prosmallexclusions}, $\mathcal V$ satisfies
\eqref{eq:omitT20}. Moreover, $T_2^0$ satisfies
$xy\succeq xyz$. Hence
\[
x+yz
\stackrel{\eqref{eq:omitT20}}{\succeq}x^2
\succeq x^2t
\stackrel{\eqref{eq:7}}{\approx}xt.
\]
Thus $\mathcal V$ satisfies \eqref{eq:9}, and
Proposition~\ref{prodt} gives
$\mathcal V\leq\mathsf{V}(T_2,D_2)$.
If $D_2\in\mathcal V$, equality follows. Otherwise,
Proposition~\ref{pro25032103} gives \eqref{eq:25}, and
\[
xy\approx x^2y^2
\stackrel{\eqref{eq:25}}{\succeq}x^2
\stackrel{\eqref{eq:8}}{\succeq}x.
\]
Together with \eqref{eq:2} and \eqref{eq:6}, this is the basis
of $\mathsf{V}(T_2)$ in Proposition~\ref{promt}; hence
$\mathcal V=\mathsf{V}(T_2)$.
\end{proof}

\subsection{The varieties associated with $S_c(ab)$}

\begin{pro}\label{proabm}
$\mathsf{V}(S_c(ab), M_{2})$ is the ai-semiring variety defined by identities \eqref{eq:2} and \eqref{eq:8}, together with
\begin{align}
&x^2 \approx x^3; \label{eq:12}\\
&xyzt \preceq xyz+t; \label{eq:13}\\
&t \preceq xyz+ts; \label{eq:14}\\
&xyz \preceq x+xy+z; \label{eq:15}\\
&xt \preceq xy+yz+zt; \label{eq:16}\\
&a \preceq x+xy+ab; \label{eq:17}\\
&y \preceq x^2+yz; \label{eq:18}\\
&xy \preceq x^2+y. \label{eq:19}
\end{align}
\end{pro}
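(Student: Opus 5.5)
Following the pattern of Propositions~\ref{promd}--\ref{prom0t}, the proof has two halves. First we check soundness: $M_2$ and $S_c(ab)$ satisfy \eqref{eq:2}, \eqref{eq:8} and \eqref{eq:12}--\eqref{eq:19}. For $M_2$ this is immediate from Lemma~\ref{lem:small-tests}(1), since in each inequality the content of the left side lies in the content of the right side. For $S_c(ab)$ we apply Lemma~\ref{lem01} to each inequality in turn. Inequalities \eqref{eq:13}--\eqref{eq:15} have a summand of length $\geq3$, so condition (i) applies. Inequalities \eqref{eq:17}--\eqref{eq:19} have a variable in $c(L_1)\cap c(L_2)$ (e.g.\ $x$ in $x+xy$), or a loop $x^2$, which is an odd cycle; so (ii) or (iii) applies. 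For \eqref{eq:16}, $xt$ is joined by the path $x\!-\!y\!-\!z\!-\!t$ of length $3$, so (iv) applies. The identities \eqref{eq:8} and \eqref{eq:12} are checked directly.

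For completeness, we take a nontrivial inequality $\bq\preceq\bu$, $\bu=\bu_1+\cdots+\bu_n$, holding in both algebras. Then $c(\bq)\subseteq c(\bu)$, and one of (i)--(iv) of Lemma~\ref{lem01} holds. We first record some derived consequences of the basis:
\begin{itemize}
\item From \eqref{eq:12}, \eqref{eq:13} and \eqref{eq:15}: any word of length $\geq 3$ absorbs, and a long summand lets us add arbitrary products, as in $\bw\preceq \bw+\bv$ for words $\bw$ with $\ell(\bw)\geq3$ and $c(\bv)$ covered.
\item From \eqref{eq:14}: once $\bu$ contains a summand of length $\geq3$, every variable of $c(\bu)$ that appears in a summand lies below $\bu$.
\end{itemize}
The idea is that each of conditions (i)--(iii) puts $\bu$ into a ``saturated'' state in which every word over $c(\bu)$ lies below $\bu$. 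Condition (iv) is then handled separately by walking along an odd path using \eqref{eq:16}.

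The cases run as follows.
\begin{itemize}
\item \textbf{Case (i).} Some $\ell(\bu_i)\geq3$. Use \eqref{eq:14} to get every variable $x\in c(\bu)$ below $\bu$, then \eqref{eq:13} repeatedly to multiply $\bu_i$ by variables. This gives $\bu\succeq \bu_i\bq$. To strip $\bu_i$ off, note that $\bq$ is built from variables below $\bu$, and \eqref{eq:15}/\eqref{eq:19} produce products of two or three summands. We then reduce long words modulo \eqref{eq:12} and use \eqref{eq:8} when $\ell(\bq)=1$.
\item \textbf{Case (ii).} Some $x\in c(L_1(\bu))\cap c(L_2(\bu))$, say $x+xy\preceq\bu$. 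Apply \eqref{eq:15} to get $xyz$-type words of length $3$, reducing to Case (i); \eqref{eq:17} lets us pick up any other variable.
\item \textbf{Case (iii).} There is an odd cycle. A loop $x^2$ gives, via \eqref{eq:18}, \eqref{eq:19} and \eqref{eq:12}, words of length $\geq3$, i.e.\ Case (i). A genuine odd cycle $x_1x_2+\cdots+x_{2k+1}x_1$ is shortened using \eqref{eq:16}: replace a path of length $3$ by one edge, repeatedly, until it becomes a triangle or a loop. A triangle $xy+yz+zx$ yields, by \eqref{eq:16} with $t=x$, the loop $x^2$.
\item \textbf{Case (iv).} Here $\bq$ is a single edge joined by an odd path. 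Repeated use of \eqref{eq:16} collapses the path to the edge $\bq$ itself. If $\ell(\bq)=1$ in this or any earlier case, we conclude using \eqref{eq:8}, \eqref{eq:18} and \eqref{eq:19}.
\end{itemize}

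The main obstacle is Case (i) in full generality. We must show that a single long summand, together with the content condition from $M_2$, yields an arbitrary word $\bq$ (of arbitrary length and multiplicities) using only the finitely many inequalities \eqref{eq:12}--\eqref{eq:15}. The plan there is to prove an auxiliary claim first: if $\ell(\bu_i)\geq3$ and $x\in c(\bu)$, then $\bu\succeq \bu+\bu_ix$. Iterating this claim gives $\bu\succeq\bu_i\bq$. We would then use \eqref{eq:15} with $x,z$ chosen as suitable variables or words below $\bu$ to remove the factor $\bu_i$, reducing exponents by \eqref{eq:12}.
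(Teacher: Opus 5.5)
\textbf{The gap is the final step of Case (i).} Your case structure is the paper's: the four conditions of Lemma~\ref{lem01}, with (ii) and (iii) reduced to (i) via \eqref{eq:15}, \eqref{eq:16} and \eqref{eq:12}, and \eqref{eq:16} applied along the odd path in (iv). In Case (i) you also correctly obtain $\bu\succeq\bu_i\bq$ from \eqref{eq:14} followed by repeated use of \eqref{eq:13}.

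What remains is removing the factor $\bu_i$, and you leave this as a plan that does not work as stated:
\begin{itemize}
\item \eqref{eq:15} ($xyz\preceq x+xy+z$) assembles products out of smaller pieces; it cannot delete a factor.
\item Building $\bq$ upward from variables via \eqref{eq:15} and \eqref{eq:19} ($xy\preceq x^2+y$) needs squares $x^2\preceq\bu$ as a base case, and you never produce them.
\item \eqref{eq:8} ($x\preceq x^2$) points the wrong way for obtaining squares.
\end{itemize}
Cases (ii) and (iii) are reduced to Case (i), so the completeness half is unproved at exactly the step everything depends on.

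\textbf{The missing idea} is that \eqref{eq:14} itself does the stripping. Since $\ell(\bu_i)\geq 3$, write $\bu_i$ as a product of three nonempty words and substitute $xyz\mapsto\bu_i$, $t\mapsto\bq$, $s\mapsto\bu_i$. This gives $\bq\preceq\bu_i+\bq\bu_i$. Hence
\[
\bu\succeq\bu_i+\sum_{x\in c(\bq)}x\succeq\bu_i+\bu_i\bq\succeq\bq,
\]
uniformly for every $\bq$ of any length and multiplicities. This is the paper's Case~1, and no appeal to \eqref{eq:15}, \eqref{eq:19} or \eqref{eq:8} is needed there.

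Your upward route can be repaired. First derive $\bu_i\approx\bu_i^2$: put $t\mapsto\bu_i$ in \eqref{eq:13}, and combine with \eqref{eq:8}. Then \eqref{eq:18} with $x\mapsto\bu_i$, $z\mapsto\bu_i$ gives $w^2\preceq\bu_i+w^2\bu_i$. But that is again a factor-stripping identity, so it is the same mechanism by a longer path.

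\textbf{A minor slip in the soundness half:} the right side $x+xy+z$ of \eqref{eq:15} has no summand of length $\geq 3$. It is condition (ii) of Lemma~\ref{lem01}, not (i), that applies there.
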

\begin{proof}
It is straightforward to verify that both $S_c(ab)$ and $M_2$
satisfy \eqref{eq:2}, \eqref{eq:8}, and \eqref{eq:12}--\eqref{eq:19}.
Conversely, let $\bu\succeq\bq$ be a nontrivial inequality
satisfied by both algebras, where $\bu=\bu_1+\cdots+\bu_n$
and $\bu_i,\bq\in X_c^+$.
Since it holds in $M_2$, we have $c(\bq)\subseteq c(\bu)$.
By Lemma~\ref{lem01}, it suffices to consider four cases.

\textbf{Case 1.} $\ell(\bu_j)\geq3$ for some $\bu_j\in\bu$.
By \eqref{eq:14}, $\bu\succeq x$ for every $x\in c(\bu)$.
Hence
\[
\bu
\succeq \bu_j+\sum_{x\in c(\bq)}x
\stackrel{\eqref{eq:13}}{\succeq}
\bu_j+\bu_j\bq
\stackrel{\eqref{eq:14}}{\succeq}
\bq.
\]

\textbf{Case 2.}
$c(L_1(\bu))\cap c(L_2(\bu))\neq\emptyset$.
Then $x,xy\in\bu$ for some variables $x,y$, and
\[
\bu\succeq x+xy
\stackrel{\eqref{eq:15}}{\succeq}x^2y.
\]
Adjoining $x^2y$ reduces the proof to Case~1.

\textbf{Case 3.}
The graph $\mathbb G_{\bu}$ contains an odd cycle.
Repeated applications of \eqref{eq:16} yield
\[
\bu\succeq x^2
\stackrel{\eqref{eq:12}}{\approx}x^3.
\]
Adjoining $x^3$ reduces the proof to Case~1.

\textbf{Case 4.} $\bq\in\mathbb G_{\bu}^{odd}$.
Then $\bq=xy$, where $x,y$ are joined by an odd path
in $\mathbb G_{\bu}$.
Repeated applications of \eqref{eq:16} along this path
give $\bu\succeq xy=\bq$.

Thus $\bu\succeq\bq$ follows from the proposed identities.
\end{proof}

\begin{pro}\label{proab0m0}
$\mathsf{V}(S_c(ab)^0, M^0_{2})$ is the commutative ai-semiring variety defined by identities \eqref{eq:2}, \eqref{eq:8}, \eqref{eq:12}, and \eqref{eq:13},
 $\sigma_n$ for all $n \geq 1$, and
 \begin{align}
&xy^2 \preceq x+xy. \label{eq:20}
\end{align}
\end{pro}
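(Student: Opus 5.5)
We plan to prove the proposition in two parts. Soundness means checking that $S_c(ab)^0$ and $M_2^0$ satisfy \eqref{eq:2}, \eqref{eq:8}, \eqref{eq:12}, \eqref{eq:13}, \eqref{eq:20} and every $\sigma_n$. Completeness means showing that every inequality $\bq\preceq\bu$ holding in both algebras follows from these identities.

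The tool for both parts is a description of the identities of an algebra $S^0$, that is, $S$ with a new element adjoined that is an additive identity and a multiplicative zero. Assigning this new element to all variables outside $c(\bq)$ shows the following: $\bq\preceq\bu$ holds in $S^0$ if and only if
\[
c(D_{\bq}(\bu))=c(\bq)\quad\text{and}\quad \bq\preceq D_{\bq}(\bu)\ \text{holds in } S.
\]
For $M_2^0$ this is the union condition used in the proof of Proposition~\ref{prom0t}. Combined with Lemma~\ref{lem01}, this criterion makes soundness routine. For example, $\sigma_n$ holds in $S_c(ab)^0$ because $D_{\bq^{(n)}}(\bu^{(n)})=\bu^{(n)}$ has the same content as $\bq^{(n)}$, and its graph is an odd cycle, so condition (iii) of Lemma~\ref{lem01} applies.

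For completeness, let $\bq\preceq\bu$ hold in both algebras, and put $\bv=D_{\bq}(\bu)$. Since $\bu\succeq\bv$ trivially, it suffices to derive $\bq\preceq\bv$. By the criterion, $c(\bv)=c(\bq)$ and $\bq\preceq\bv$ holds in $S_c(ab)$. We then run through the four cases of Lemma~\ref{lem01} applied to $\bv$.

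\textbf{Case (i)}: some $\bw\in\bv$ has $\ell(\bw)\geq3$. This is the key reduction. First, \eqref{eq:13}, with $xyz$ replaced by $\bw$ and $t$ by a summand $\bv_j$, gives $\bw\bv_j\preceq\bw+\bv_j$. Iterating this, and multiplying in each summand as often as needed, we obtain
\[
\bv\succeq\bw\,\bv_1^m\cdots\bv_n^m .
\]
The right-hand side is a word whose content is exactly $c(\bq)$ and in which every variable has exponent at least $2$. Using \eqref{eq:12}, that word is equivalent to $\prod_{x\in c(\bq)}x^2$. On the other hand, \eqref{eq:8} together with monotonicity gives
\[
\bq\preceq\bq^2\stackrel{\eqref{eq:12}}{\approx}\prod_{x\in c(\bq)}x^2 .
\]
Hence $\bq\preceq\bv$.

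\textbf{Case (ii)}: $x,xy\in\bv$ for some variables $x,y$. By \eqref{eq:20}, $\bv\succeq xy^2$. Adjoining this word of length $3$ does not change the content, so we are back in Case (i).

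\textbf{Case (iii)}: $\mathbb G_{\bv}$ contains an odd cycle. Substituting the cycle's vertices into $\sigma_n$ gives $\bv\succeq$ the product of those vertices, a word of length at least $3$. Again this reduces to Case (i).

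\textbf{Case (iv)}: $\bq=xy\in\mathbb G_{\bv}^{odd}$. Here $c(\bv)=\{x,y\}$, so the only possible edges of $\mathbb G_{\bv}$ are the loops $x^2,y^2$ and the edge $xy$. An odd path from $x$ to $y$ must therefore use the edge $xy$, so $xy\in\bv$ directly. (If $x=y$, the odd path forces an odd cycle, and we are in Case (iii).)

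The main obstacle is Case (i). There, \eqref{eq:14} is unavailable, since it would enlarge the content, so one must check carefully that the iterated use of \eqref{eq:13}, followed by the normalization with \eqref{eq:12}, really produces $\prod_{x\in c(\bq)}x^2$ exactly, with every exponent at least $2$. A second point needing care is the precise form of the adjoined element in $S^0$, on which the criterion above depends.
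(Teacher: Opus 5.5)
Your proof is correct and is essentially the paper's argument. You pass to $D_{\bq}(\bu)$ and run through the four cases of Lemma~\ref{lem01}. The long-summand case is handled by iterating \eqref{eq:13} and normalizing with \eqref{eq:12} and \eqref{eq:8}. Cases (ii) and (iii) reduce to it via \eqref{eq:20} and $\sigma_m$; for a loop $x^2$, use \eqref{eq:12}, or send all variables of $\sigma_1$ to $x$.

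One caveat: your zero-extension criterion overstates the ``only if'' direction for a single algebra. For example, $S_c(ab)^0$ satisfies $xt\preceq x^3$ even though $c(x^3)\neq c(xt)$. This does not break the proof, since soundness uses only the ``if'' direction, and in the completeness part the equality $c(D_{\bq}(\bu))=c(\bq)$ is supplied by $M_2^0$.
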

\begin{proof}
It is straightforward to verify that both $S_c(ab)^0$ and
$M_2^0$ satisfy \eqref{eq:2}, \eqref{eq:8}, \eqref{eq:12}, \eqref{eq:13}, and \eqref{eq:20} and $\sigma_n$
for all $n\geq1$.
Conversely, let $\bq\preceq\bu$ be a nontrivial inequality
satisfied by both algebras, where
$\bu=\bu_1+\cdots+\bu_n$ and $\bu_i,\bq\in X_c^+$.
After reindexing, write
\[
D_{\bq}(\bu)=\bu_1+\cdots+\bu_k,
\]
where these are precisely the summands whose contents
are contained in $c(\bq)$.
By the equational characterizations of the zero extensions,
\[
S_c(ab)\models\bq\preceq D_{\bq}(\bu),
\qquad
c(\bq)=\bigcup_{i=1}^{k}c(\bu_i).
\]
By Lemma~\ref{lem01}, it suffices to consider four cases.

\textbf{Case 1.}
$\ell(\bu_j)\geq3$ for some $\bu_j\in D_{\bq}(\bu)$.
Keeping one occurrence of $\bu_j$ and repeatedly applying
\eqref{eq:13}, we obtain
\[
\bu
\succeq D_{\bq}(\bu)
\stackrel{\eqref{eq:13}}{\succeq}
(\bu_1\cdots\bu_k)^2
\stackrel{\eqref{eq:2},\eqref{eq:12}}{\approx}
\bq^2
\stackrel{\eqref{eq:8}}{\succeq}
\bq.
\]

\textbf{Case 2.}
$c(L_1(D_{\bq}(\bu)))\cap c(L_2(D_{\bq}(\bu)))
\neq\emptyset$.
Then $x$ and $xy$ are summands of $D_{\bq}(\bu)$
for some variables $x,y$.
By \eqref{eq:20},
\[
xy^2\preceq x+xy\preceq D_{\bq}(\bu).
\]
Adjoining the summand $xy^2$, we obtain the result
as in Case~1.

\textbf{Case 3.}
The graph $\mathbb G_{D_{\bq}(\bu)}$ contains an odd cycle.
If the cycle is a loop at $x$, then
$x^3\approx x^2\preceq D_{\bq}(\bu)$ by \eqref{eq:12}.
Otherwise, the corresponding identity $\sigma_m$ gives
\[
x_1x_2\cdots x_{2m+1}
\preceq D_{\bq}(\bu).
\]
In either case, adjoining this long summand reduces
the proof to Case~1.

\textbf{Case 4.}
$\bq\in\mathbb G_{D_{\bq}(\bu)}^{odd}$.
Since Case~3 is already excluded, $\bq=xy$ for distinct
variables $x,y$, joined by an odd path in
$\mathbb G_{D_{\bq}(\bu)}$.
Every vertex of this graph belongs to
$c(\bq)=\{x,y\}$, so $xy$ must be a summand of
$D_{\bq}(\bu)$.
Thus $\bq\preceq\bu$ is trivial.

Hence every such inequality follows from the proposed
identities.
\end{proof}

\begin{lem}\label{lem:S70-identities}
The semiring $S_7^0$ satisfies identities \eqref{eq:2},
\eqref{eq:8}, and \eqref{eq:12}, the inequalities $\sigma_n$
for all $n\geq1$, and the following identities:
\begin{align}
x^2y^2&\approx x^2y+xy^2; \label{eq:21}\\
x+xy&\approx x+xy^2; \label{eq:22}\\
x^2+y&\succeq x^2y^2. \label{eq:23}
\end{align}
\end{lem}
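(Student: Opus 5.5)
The plan is to verify every listed identity by direct evaluation in $S_7^0$, using the structure of $S_7$ and of the zero extension. I take $S_7=\{1,a,\infty\}$ to be the flat semiring obtained from $S_c(a)$ by adjoining a multiplicative identity $1$: here $a^2=\infty$, $\infty$ is the greatest element and is multiplicatively absorbing, and the sum of two distinct elements is $\infty$. I take $S_7^0=S_7\cup\{0\}$, where $0$ is the additive identity (the least element) and a multiplicative zero. This identification is not given in the text above and should be checked against the definition of $S_7$ in \cite{gmrz}. Since $S_7^0$ is finite and commutative, \eqref{eq:2} is immediate. Every other item is an inequality $\bq\preceq\bu$, or a pair of such inequalities, so it suffices to check $\varphi(\bq)\le\varphi(\bu)$ for every assignment $\varphi$ of the variables into $S_7^0$.

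\textbf{Reduction to $S_7$.} First I dispose of assignments that send some variable to $0$. In that case every word containing the variable evaluates to $0$. For an inequality $\bq\preceq\bu$ with $c(\bq)\supseteq c(\bu_i)$ for some summand $\bu_i$, such a variable either kills $\bq$, so the left side is $0\le$ anything, or it lies outside $c(\bq)$. In the latter case we are reduced to checking $\bq\preceq D_{\bq}(\bu)$, and I will check this in $S_7$ after deleting the summands that vanish. I go through \eqref{eq:8}, \eqref{eq:12}, \eqref{eq:21}--\eqref{eq:23} and $\sigma_n$ in turn.
\begin{itemize}
\item For $\sigma_n$, \eqref{eq:8} and \eqref{eq:12}, any $0$ in the left-hand side makes it $0$.
\item For \eqref{eq:21}, both sides contain both variables, so both sides vanish together.
\item For \eqref{eq:22}, $x\mapsto 0$ makes both sides $0$, and $y\mapsto 0$ turns both sides into $x$.
\item For \eqref{eq:23}, a $0$ value kills the right-hand side $x^2y^2$.
\end{itemize}
So the whole verification reduces to assignments into $S_7$.

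\textbf{The identities in $S_7$.} For \eqref{eq:8}, \eqref{eq:12}, \eqref{eq:21}--\eqref{eq:23} I check the finitely many values in $\{1,a,\infty\}$, using that every element other than $1$ has square $\infty$. For instance, in \eqref{eq:21} both sides equal $1$ when $x=y=1$, and both equal $\infty$ otherwise. In \eqref{eq:23}, $x^2+y\ge x^2y^2$ holds because the right-hand side is $1$ only when $x=y=1$, and otherwise the left-hand side is already $\infty$ or contains the summand $\infty$. For \eqref{eq:22}, $xy$ and $xy^2$ differ only when $y=a$, and then $x+xa$ and $x+x\infty$ both equal $\infty$.

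\textbf{The main step: $\sigma_n$.} The only genuinely infinite family is $\sigma_n$, and this is where the real work lies. If some $x_i=\infty$, the edge term $x_ix_{i+1}$ equals $\infty$, so the right-hand side is the top. Assume then that all $x_i\in\{1,a\}$.
\begin{itemize}
\item If two cyclically adjacent variables equal $a$, their edge is $\infty$.
\item If all $x_i=1$, both sides equal $1$.
\item Otherwise the values $a$ are pairwise nonadjacent and at least one occurs. Every edge then evaluates to $a$ or $1$. Because the cycle has odd length $2n+1$, the values cannot alternate around the whole cycle, so some edge joins two $1$'s and evaluates to $1$. Some other edge contains an $a$ and evaluates to $a$. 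The sum is therefore $1+a=\infty$.
\end{itemize}
In every case the right-hand side is $\infty$ unless all $x_i=1$, and this proves $\sigma_n$. This parity argument is the expected obstacle, and it is exactly where the oddness of $2n+1$ is used.
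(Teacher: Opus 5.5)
Your proposal is correct and follows essentially the paper's route: you dispose of assignments that send a variable to the adjoined $0$ (rightly taken as additive identity and multiplicative zero), then verify everything by direct computation in the subalgebra $S_7=\{1,a,\infty\}$, whose description matches Case~2 of the proof of Proposition~\ref{proab}. The only difference is that the paper simply asserts that $S_7$ satisfies every $\sigma_n$, whereas you supply the odd-cycle parity argument yourself, which makes the verification self-contained.
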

\begin{proof}
The identities \eqref{eq:2}, \eqref{eq:8},
\eqref{eq:12}, and \eqref{eq:21}--\eqref{eq:23} follow by a
direct verification in $S_7^0$. To verify $\sigma_n$, let
$\varphi:P_f(X_c^+)\to S_7^0$ be a semiring homomorphism. If
$\varphi(x_i)=0$ for some $i$, then
$\varphi(\bq^{(n)})=0\leq\varphi(\bu^{(n)})$. Otherwise the
calculation takes place in the subalgebra $S_7$, where
$\sigma_n$ holds. Hence $S_7^0$ satisfies $\sigma_n$ for every
$n\geq1$.
\end{proof}

\begin{pro}\label{proabbasis}
$\mathsf{V}(S_c(ab))$ is the commutative ai-semiring
variety defined by identities \eqref{eq:12},
\eqref{eq:16}, and
\begin{align}
x^2 &\approx x+xy; \label{eq:28}\\
t &\preceq xyz. \label{eq:29}
\end{align}
\end{pro}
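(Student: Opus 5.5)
The plan is to check that $S_c(ab)$ satisfies the four displayed identities, and then to derive every inequality that holds in $S_c(ab)$ from them, following the four cases of Lemma~\ref{lem01}. This mirrors the proof of Proposition~\ref{proabm}.

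\textbf{Soundness.} Here $S_c(ab)=\{a,b,ab,0\}$, and $0$ is the top element and the multiplicative zero.
\begin{itemize}
\item Every square is $0$: $a^2$ and $b^2$ do not divide $ab$, and products with $ab$ are $0$. So \eqref{eq:12} holds.
\item Every product of three elements is $0$, which gives \eqref{eq:29}.
\item For \eqref{eq:28}, the left side is $0$. Also $s+st=0$ for all $s,t$, since $s\neq st$ whenever $st\neq0$; so the right side is $0$ as well.
\item Identity \eqref{eq:16} holds by Lemma~\ref{lem01}(iv). Indeed, $\mathbb G_{xy+yz+zt}$ is the path $x-y-z-t$ of odd length $3$, so $xt\in\mathbb G^{odd}_{xy+yz+zt}$. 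Alternatively one can check it directly.
\end{itemize}

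\textbf{Completeness.} Let $\bq\preceq\bu$ be a nontrivial inequality satisfied by $S_c(ab)$, with $\bu=\bu_1+\cdots+\bu_n$. The first observation is that any word $\bw$ with $\ell(\bw)\geq3$ can be written as a product $\bw=\bp_1\bp_2\bp_3$ of three nonempty words. Substituting into \eqref{eq:29} then gives $\bw\succeq\bq$ for an arbitrary word $\bq$. Now go through the conditions of Lemma~\ref{lem01}.

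\emph{(i)} Some $\ell(\bu_j)\geq3$. Then $\bu\succeq\bu_j\succeq\bq$ immediately.

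\emph{(ii)} Some $x$ and $xy$ are summands of $\bu$. Then $\bu\succeq x+xy\approx x^2\approx x^3$ by \eqref{eq:28} and \eqref{eq:12}. Since $\ell(x^3)=3$, case (i) gives $\bu\succeq\bq$.

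\emph{(iii)} The graph $\mathbb G_\bu$ contains an odd cycle.
\begin{itemize}
\item If the cycle is a loop, then $x^2$ is a summand of $\bu$, and $x^2\approx x^3$ reduces to case (i).
\item Otherwise the cycle is $x_1,\dots,x_{2m+1}$ with $m\geq1$. Applying \eqref{eq:16} to three consecutive edges $x_1x_2,\,x_2x_3,\,x_3x_4$ gives $\bu\succeq x_1x_4$. This shortens the cycle by two. Iterating leaves a triangle $x_1x_2+x_2x_3+x_3x_1\preceq\bu$.
\item Now apply \eqref{eq:16} with $x=x_1$, $y=x_2$, $z=x_3$, $t=x_1$. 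This gives $\bu\succeq x_1^2\approx x_1^3$, and case (i) finishes.
\end{itemize}

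\emph{(iv)} $\bq=xy$ with an odd path from $x$ to $y$ in $\mathbb G_\bu$. Repeated applications of \eqref{eq:16} shorten the path by two at a time, down to a single edge. That edge is $xy$ itself, so $\bu\succeq xy$.

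The only delicate point is the bookkeeping in case (iii): the cycle must be reduced to the triangle, and the triangle then turned into a square by taking $t=x$ in \eqref{eq:16}. This identification of variables is legitimate, because \eqref{eq:16} may be applied under any substitution. Everything else is routine substitution. Since every inequality of $S_c(ab)$ is derived, and identities reduce to pairs of inequalities with a single word on the left, the four identities form a basis.
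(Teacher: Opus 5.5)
Your proposal is correct: the soundness checks in $S_c(ab)$ and the four-case derivation via Lemma~\ref{lem01} are all valid, including reducing an odd cycle to a triangle and then taking $t=x$ in \eqref{eq:16} to get $x_1^2\approx x_1^3$. The paper states this proposition without proof, but your argument is the same case analysis it uses for Proposition~\ref{proabm} and for conditions (i)--(iii) in Proposition~\ref{prop:strictcontains}, so it is essentially the paper's approach.
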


\begin{pro}\label{proab0basis}
$\mathsf{V}(S_c(ab)^0)$ is the commutative ai-semiring
variety defined by identities \eqref{eq:8},
\eqref{eq:12}, $\sigma_n$ for all $n\geq1$, and
\begin{align}
xyzt &\preceq xyz; \label{eq:30}\\
(xy)^2 &\preceq x+xy. \label{eq:31}
\end{align}
\end{pro}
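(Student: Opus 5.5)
The plan is to check first that $S_c(ab)^0$ satisfies the listed identities, and then to prove completeness by reducing, through the zero extension, to the criterion of Lemma~\ref{lem01} applied to $D_{\bq}(\bu)$. This follows the pattern of Proposition~\ref{proab0m0}.

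\textbf{Soundness.} The identities \eqref{eq:8}, \eqref{eq:12}, \eqref{eq:30} and \eqref{eq:31} are checked directly in $S_c(ab)^0$. In $S_c(ab)$ every word of length at least $3$ evaluates to the top element $0$; this handles \eqref{eq:30} and \eqref{eq:31}, and \eqref{eq:12} holds because squares are either $0$ or not in $S_c(ab)$ at all. For $\sigma_n$ I argue as in Lemma~\ref{lem:S70-identities}. If some $x_i$ is sent to the adjoined element, then both sides are sent to it. Otherwise the evaluation takes place in $S_c(ab)$, where $\sigma_n$ holds because $\mathbb G_{\bu^{(n)}}$ is an odd cycle (Lemma~\ref{lem01}(iii)).

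\textbf{Completeness: reduction.} Let $\bq\preceq\bu$ be a nontrivial inequality holding in $S_c(ab)^0$. I use the standard description of identities of a zero extension (the adjoined element is an additive identity and a multiplicative zero). Sending every variable outside $c(\bq)$ to the adjoined element shows two things: $D_{\bq}(\bu)$ is nonempty, and $S_c(ab)\models \bq\preceq D_{\bq}(\bu)$. Since $\bu\succeq D_{\bq}(\bu)$, it suffices to derive $\bq\preceq D_{\bq}(\bu)$. By Lemma~\ref{lem01}, one of conditions (i)--(iv) holds for $D_{\bq}(\bu)$.

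\textbf{Key step: Case (i).} Suppose some summand $\bu_j$ of $D_{\bq}(\bu)$ has $\ell(\bu_j)\ge 3$ and $c(\bu_j)\subseteq c(\bq)$. By \eqref{eq:12}, every exponent in $\bq^2$ is at least $2$ and can be raised freely. Because $c(\bu_j)\subseteq c(\bq)$, this gives $\bq^2\approx\bq^2\bu_j$. Applying \eqref{eq:30} with $xyz$ replaced by a length-$3$ divisor of $\bu_j$ (multiplied by the cofactor) gives $\bu_j\succeq \bu_j\bq^2$. Combining these with \eqref{eq:8},
\[
\bq\preceq\bq^2\approx \bu_j\bq^2\preceq \bu_j\preceq D_{\bq}(\bu).
\]
I expect this case to be the main technical step, since all the other cases are reduced to it.

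\textbf{Remaining cases.}

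In case (ii), $x$ and $xy$ are summands of $D_{\bq}(\bu)$. Then \eqref{eq:31} gives $(xy)^2\preceq D_{\bq}(\bu)$. This word has length $4$ and content inside $c(\bq)$, so adjoining it reduces to case (i).

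In case (iii), suppose first the odd cycle is a loop at $x$. Then $x^3\approx x^2$ by \eqref{eq:12}, and $x^3$ is a long summand. Otherwise I pass to a simple odd cycle $x_1,\dots,x_{2m+1}$. After renaming variables, $\sigma_m$ gives $x_1\cdots x_{2m+1}\preceq D_{\bq}(\bu)$. All its variables lie in $c(\bq)$, so again case (i) applies.

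In case (iv), with case (iii) excluded, $\bq=xy$ for distinct $x,y$ joined by an odd path in $\mathbb G_{D_{\bq}(\bu)}$. All vertices lie in $\{x,y\}$ and there are no loops, so the path is the single edge $xy$. Hence $xy$ is already a summand and the inequality is trivial.

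Together, these cases show that every identity of $S_c(ab)^0$ follows from the proposed identities.
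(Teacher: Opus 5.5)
The paper states Proposition~\ref{proab0basis} without proof, so the natural comparison is with its proof of Proposition~\ref{proab0m0}. You follow that proof's four-case template through Lemma~\ref{lem01} applied to $D_{\bq}(\bu)$, and your argument is correct.

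The one substantive difference is in Case~(i). For $\mathsf{V}(S_c(ab)^0,M_2^0)$ the paper uses \eqref{eq:13} together with the $M_2^0$-condition $c(\bq)=\bigcup_{i\le k}c(\bu_i)$ to rebuild $\bq^2$ from all summands of $D_{\bq}(\bu)$. For $S_c(ab)^0$ alone that content condition is unavailable. You correctly replace it by the stronger identity \eqref{eq:30}, which absorbs $\bq^2$ into a single long summand: $\bq\preceq\bq^2\approx\bu_j\bq^2\preceq\bu_j$. Cases (ii) and (iii) then reduce to this step exactly as in the paper.

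There are two small slips in your soundness paragraph.
\begin{itemize}
\item For $\sigma_n$: when some $x_i$ is sent to the adjoined element, only the left side $\bq^{(n)}$ is forced to equal it. The summands of $\bu^{(n)}$ not involving $x_i$ need not be. This still suffices, because the adjoined element is the least element.
\item For \eqref{eq:31}: you also need that $x+xy$ always evaluates to the top of $S_c(ab)$, which is the content of Lemma~\ref{lem01}(ii). It is not enough that $(xy)^2$ does.
\end{itemize}
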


\begin{pro}\label{proabrelative}
The variety $\mathsf{V}(S_c(ab))$ is the subvariety of
$\mathsf{V}(S_c(ab)^0)$ defined by \eqref{eq:16}.
\end{pro}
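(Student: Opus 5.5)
The claim is that, inside $\mathsf{V}(S_c(ab)^0)$, identity \eqref{eq:16} cuts out exactly $\mathsf{V}(S_c(ab))$. Since $S_c(ab)$ satisfies \eqref{eq:16} by Proposition~\ref{proabbasis}, one inclusion is immediate. For the converse, take the basis of $\mathsf{V}(S_c(ab)^0)$ from Proposition~\ref{proab0basis}, add \eqref{eq:16}, and derive the basis of $\mathsf{V}(S_c(ab))$ from Proposition~\ref{proabbasis}, namely \eqref{eq:12}, \eqref{eq:16}, \eqref{eq:28} and \eqref{eq:29}. Identities \eqref{eq:12} and \eqref{eq:16} are already present, so the work is to derive \eqref{eq:28} and \eqref{eq:29}.

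\textbf{Deriving \eqref{eq:29}, $t\preceq xyz$.} I would first extract a loop from \eqref{eq:16}. Substituting $x\mapsto t$, $y\mapsto y$, $z\mapsto t$ gives $t^2\preceq ty+yt+t^2$, which is trivial, so a genuine loop needs a sharper choice. The substitution I would try first is $z\mapsto x$ together with $t\mapsto x$, giving $x^2\preceq xy+yx+x^2$; this is also trivial. Instead I would aim directly for $xyz\succeq$ everything, using \eqref{eq:30} to obtain $xyz\succeq$ all longer words in $c(xyz)$. Then apply \eqref{eq:16} with $x\mapsto t$, $y\mapsto xyz$, $z\mapsto xyz\cdot s$, $t\mapsto\ldots$ so that the right-hand side collapses by \eqref{eq:30} to $xyz$ plus terms containing $t$. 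This exchange lemma, "$\bw$ of length $\ge3$ dominates words involving new variables", is the step I expect to be the main obstacle. The cleanest route is probably to prove $x^2\preceq xy$-type consequences first.

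\textbf{Deriving \eqref{eq:28}, $x^2\approx x+xy$.} The direction $x+xy\preceq x^2$ would follow from \eqref{eq:29}-style domination once $x^2$ is shown to dominate arbitrary words. For the other direction, \eqref{eq:8} gives $x\preceq x^2$. Identity \eqref{eq:31} gives $x^2y^2\preceq x+xy$, and \eqref{eq:16} with $x\mapsto x$, $y\mapsto y$, $z\mapsto xy$, $t\mapsto x$ yields $x^2\preceq xy+xy^2+x^2y$. Together with \eqref{eq:12} and \eqref{eq:30}, this should reduce $x^2$ to a sum bounded by $x+xy$.

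\textbf{Fallback.} If the direct derivations stall, I would argue semantically. Let $\bq\preceq\bu$ hold in $S_c(ab)$. By Lemma~\ref{lem01} one of (i)--(iv) holds, and I would mimic the case analysis of Proposition~\ref{proabm}, using \eqref{eq:16} for the odd-path case (iv) and for the odd-cycle case (iii). The substantive point is case (i) and the content-expanding cases, where $\bq$ may contain variables not appearing in $D_{\bq}(\bu)$. There I would use \eqref{eq:16} with repeated variables to obtain an inequality of the form $t\preceq \bw+\ldots$ for long words $\bw$. I expect this to be the crux: \eqref{eq:16} is the only available axiom that lets a summand of $\bu$ dominate a word in variables outside its content.
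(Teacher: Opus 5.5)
Your overall plan is the paper's: add \eqref{eq:16} to the basis of $\mathsf{V}(S_c(ab)^0)$ from Proposition~\ref{proab0basis} and derive \eqref{eq:28} and \eqref{eq:29}. As written, however, there is a genuine gap, because the proposal never supplies the one idea that makes this work.

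Both \eqref{eq:29} and the half $x+xy\preceq x^2$ of \eqref{eq:28} need a square to dominate arbitrary words, and you leave this open. The two substitutions you try in \eqref{eq:16} are trivial, and the ``exchange lemma'' leaves the value of $t$ unspecified. The suggested intermediate goal $x^2\preceq xy$ is not even a consequence: it fails in $S_c(ab)$, since for $x=a$, $y=b$ the element $a^2=0$ is the top while $ab\neq 0$. The fallback via Lemma~\ref{lem01} hits exactly the same missing lemma in its ``content-expanding'' cases, so it does not get around the difficulty.

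The missing step is the substitution $z\mapsto y$, $t\mapsto x$ in \eqref{eq:16}, which gives $x^2\preceq xy+y^2$. By \eqref{eq:8}, \eqref{eq:12} and \eqref{eq:30},
\[
xy\preceq xy^2\approx xy^3\preceq y^3\approx y^2 ,
\]
so $x^2\preceq y^2$, and by symmetry $x^2\approx y^2$: all squares coincide. Everything then follows at once:
\begin{itemize}
\item $t\preceq t^2\approx (xyz)^2\preceq xyz$ by \eqref{eq:8} and \eqref{eq:30}, which is \eqref{eq:29};
\item $x+xy\preceq x^2$, because $xy\preceq (xy)^2\approx x^2$;
\item $x^2\approx (xy)^2\preceq x+xy$ by \eqref{eq:31}, which with the previous item gives \eqref{eq:28}.
\end{itemize}
Your own derivation of $x^2\preceq x+xy$ from the substitution $(x,y,xy,x)$ is in fact correct. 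It closes via $xy^2,\,x^2y\preceq x^2y^2$ (by \eqref{eq:8}) and \eqref{eq:31}, not via \eqref{eq:12} and \eqref{eq:30}. It only yields one half of \eqref{eq:28} and does not lead to the equality of squares.
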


\begin{proof}
It is straightforward to verify that $S_c(ab)$ satisfies
\eqref{eq:16}.
Conversely, suppose that $S\in\mathsf{V}(S_c(ab)^0)$
satisfies \eqref{eq:16}.
Then
\[
y^2
\stackrel{\eqref{eq:12},\eqref{eq:30}}{\approx}
y^2+xy^2
\stackrel{\eqref{eq:8}}{\succeq}
y^2+xy
\stackrel{\eqref{eq:16}}{\succeq}
x^2.
\]
By symmetry, $S$ satisfies $x^2\approx y^2$.
Consequently,
\[
x+xy
\stackrel{\eqref{eq:31}}{\succeq}
(xy)^2
\approx x^2
\succeq x+xy,
\]
where the last inequality follows from \eqref{eq:8}
and $x^2\approx y^2$.
Moreover,
\[
xyz
\stackrel{\eqref{eq:30}}{\succeq}
(xyz)^2
\approx t^2
\stackrel{\eqref{eq:8}}{\succeq}
t.
\]
Thus $S$ satisfies \eqref{eq:28} and \eqref{eq:29}.
By Proposition~\ref{proabbasis},
$S\in\mathsf{V}(S_c(ab))$, as required.
\end{proof}

\begin{pro}\label{proabrelative2}
The variety $\mathsf{V}(S_c(ab), M_2)$ is the subvariety of
$\mathsf{V}(S_c(ab)^0, M^0_2)$ defined by \eqref{eq:16}.
\end{pro}
\begin{proof}
It is straightforward to verify that both $S_c(ab)$
and $M_2$ satisfy \eqref{eq:16}.
Conversely, suppose that
$S\in\mathsf{V}(S_c(ab)^0,M_2^0)$ satisfies
\eqref{eq:16}.
By \eqref{eq:8}, \eqref{eq:12} and \eqref{eq:13},
\[
(xyz)^2\approx xyz,
\qquad
xy^2\approx x^2y^2.
\]
Substituting
$(x,y,z,t)=(x,x^2y^2,x^2y^2,x)$
in \eqref{eq:16} gives $x^2y^2\succeq x^2$.
By symmetry and \eqref{eq:13},
\[
x^2+y^2\succeq x^2y^2\succeq x^2+y^2,
\]
and hence $x^2y^2\approx x^2+y^2$.
Moreover,
\[
x+y^2
\stackrel{\eqref{eq:12},\eqref{eq:13}}{\succeq}
xy^2
\approx x^2y^2
\succeq x^2.
\]
Consequently,
\[
\begin{aligned}
x^2+yz
&\succeq (yz)^2
 \approx y^2+z^2
 \succeq y,\\
x^2+y
&\succeq x^2+y^2
 \approx (xy)^2
 \succeq xy.
\end{aligned}
\]
These are \eqref{eq:18} and \eqref{eq:19}.
Using them together with \eqref{eq:20}, we obtain
\[
\begin{aligned}
xyz+ts
&\approx (xyz)^2+ts
 \succeq t,\\
x+xy+z
&\succeq (xy)^2+z
 \succeq xyz,\\
x+xy+ab
&\succeq (xy)^2+ab
 \succeq a.
\end{aligned}
\]
Thus $S$ also satisfies \eqref{eq:14},
\eqref{eq:15} and \eqref{eq:17}.
By Proposition~\ref{proabm},
$S\in\mathsf{V}(S_c(ab),M_2)$, as required.
\end{proof}

\begin{pro}\label{prop:strictcontains}
Let $\mathcal V$ belong to one of the intervals
\[
[\mathsf{V}(S_c(ab)),\mathsf{V}(S_c(ab)^0)],
\qquad
[\mathsf{V}(S_c(ab),M_2),
 \mathsf{V}(S_c(ab)^0,M_2^0)].
\]
If $\mathcal V$ is not the least member of that interval,
then $SR_6\in\mathcal V$.
\end{pro}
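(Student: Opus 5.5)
The plan is to reduce the statement to a single identity. By Propositions~\ref{proabrelative} and \ref{proabrelative2}, the least member of each interval is exactly the subvariety of the top member defined by \eqref{eq:16}. So if $\mathcal V$ is not the least member, then $\mathcal V$ fails \eqref{eq:16}. Hence there are $S\in\mathcal V$ and $a,b,c,d\in S$ with
\[
ad\not\leq ab+bc+cd .
\]
It therefore suffices to prove the following: whenever $S\in\mathsf{V}(S_c(ab)^0,M_2^0)$ fails \eqref{eq:16}, the variety $\mathsf V(S)$ contains $SR_6$. Since $\mathsf{V}(S_c(ab)^0)\leq\mathsf{V}(S_c(ab)^0,M_2^0)$, one argument covers both intervals.

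First I would replace the witness by one in the free object. The inequality $xt\preceq xy+yz+zt$ fails in $\mathcal V$, so it fails in the relatively free algebra $F_{\mathcal V}(x,y,z,t)$. This algebra is a quotient of $P_f(X_c^+)$ on four generators. By Proposition~\ref{proab0m0}, every word of length $\geq3$ collapses via \eqref{eq:13} and \eqref{eq:12}, so $F_{\mathcal V}$ is finite and sits in $\mathcal V$.

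Inside it I would build a six-element subset matching the table of $SR_6$. Reading the table, $SR_6$ has a top $1$, an element $3$ with $3\cdot$anything $=1$, and two length-one chains $2<5$ and $4<6$ whose sums with anything else give $1$. The products are $2\cdot6=3$, $4\cdot5=3$, $5\cdot6=3$, and all other products equal $1$. This pattern suggests the candidates
\[
5\mapsto x,\quad 6\mapsto t,\quad 2\mapsto x+xy\ \text{(or a similar lower bound of $x$)},\quad 4\mapsto t+tz,\quad 3\mapsto xt,
\]
with $1\mapsto$ the sum of everything. I would first compute the values in $S_c(ab)^0$ for the substitution $x=a$, $y=b$, $z=a$, $t=b$; there $xt=ab$ survives while $xy+yz+zt$ becomes $ab+ba+\dots$. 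That calculation shows which small terms stay distinct. More precisely, I would take $S_c(ab)^0$, whose elements are $a,b,ab,a^2,b^2,0,0'$ with $0'$ the adjoined bottom/zero pattern, and look for a subalgebra or quotient of $S_c(ab)^0\times S_c(ab)^0$ (or of $S_c(ab)\times M_2^0$) isomorphic to $SR_6$. In a direct product, pairs $(a,b)$, $(b,a)$ and the like could realize $5,6$ with product $(ab,ab)=3$ and squares collapsing to $1$.

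Concretely, the key steps are as follows. (i) Exhibit an explicit embedding of $SR_6$ into a product of copies of $S_c(ab)^0$ and $M_2^0$; this gives $SR_6\in\mathsf V(S_c(ab)^0)$. (ii) Show, for $\mathcal V$ failing \eqref{eq:16}, that the elements of $F_{\mathcal V}(x,y,z,t)$ given by the terms above are pairwise distinct. Distinctness only requires that the $\mathcal V$-inequalities among them fail, and all of these reduce to the single failure of $xt\preceq xy+yz+zt$ after substitutions and \eqref{eq:8}, \eqref{eq:12}, \eqref{eq:13}, \eqref{eq:20}. (iii) Verify closure and the Cayley tables using the identities of Proposition~\ref{proab0m0}; for example, squares and length-three products collapse into the top element by \eqref{eq:13} and \eqref{eq:12}.

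I expect step (ii) to be the main obstacle. The difficulty is choosing the terms representing $2$ and $4$ so that every required equality follows from the basis, while every required inequality of the form ``$u\not\leq v$'' can be traced back to the failure of \eqref{eq:16}, say by showing that $u\leq v$ together with the basis would imply \eqref{eq:16}, as in the proof of Proposition~\ref{proabrelative2}. My first attempt would use $2=x+xy$ and $4=t+tz$, so that the ``path'' $x\!-\!y\!-\!z\!-\!t$ is encoded. If that fails, I would instead take the top element $1$ to be $xy+yz+zt$ itself and all other elements as its sums with $x$, $t$, $xt$.
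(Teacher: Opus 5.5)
There is a genuine gap after your first step. That step agrees with the paper: by Propositions~\ref{proabrelative} and~\ref{proabrelative2}, a non-least $\mathcal V$ fails \eqref{eq:16}. The reduction you then state is false. You claim that every $S\in\mathsf{V}(S_c(ab)^0,M_2^0)$ failing \eqref{eq:16} generates a variety containing $SR_6$. But $M_2^0$ fails \eqref{eq:16}: take $x,t\in M_2$ and $y,z$ equal to the adjoined zero. Yet $M_2^0\models x^2\approx x$, whereas $5\cdot5=1\neq5$ in $SR_6$. So the lower bound $S_c(ab)\in\mathcal V$ is indispensable, and your plan never uses it. In particular, your step (ii) claim that every needed non-inequality ``reduces to the single failure of \eqref{eq:16}'' cannot be right. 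Already $3\neq1$ in $SR_6$ requires $\mathcal V\not\models(xy)^2\approx xy$, which by Proposition~\ref{proab} is exactly $S_c(ab)\in\mathcal V$.

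The construction is not viable as stated either. Every homomorphism $h$ from $SR_6$ to $M_2^0$ or $S_c(ab)^0$ identifies $3$ with $1$.
\begin{itemize}
\item For $M_2^0$, which satisfies $x^2\approx x$, we get $h(5)=h(5^2)=h(1)=h(6)$, hence $h(3)=h(1)^2=h(1)$.
\item For $S_c(ab)^0$: if $h(1)$ is the adjoined zero, then $h$ is constant, since $1+s=1$. Otherwise $h(1)h(s)=h(1)$ keeps the image inside $S_c(ab)$, where $h(1)=h(5)^2=0$ is the top and $h(3)=h(5)h(6)\in\{ab,0\}$.
\item Suppose $h(3)=ab$; say $h(5)=a$ and $h(6)=b$. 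In the natural order $5<2$ and $6<4$, since $2+5=2$ and $4+6=4$; this is the reverse of what you wrote. So $h(2)\in\{a,0\}$ and $h(4)\in\{b,0\}$. Now $h(2)h(6)=h(4)h(5)=ab$ forces $h(2)=a$ and $h(4)=b$, so $h(1)=h(2)h(4)=ab$, a contradiction.
\end{itemize}

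Hence $SR_6$ embeds in no product of copies of $S_c(ab)^0$ and $M_2^0$, so your step (i) is impossible. Moreover, for $\mathcal V=\mathsf{V}(S_c(ab)^0)$ the free algebra $F_{\mathcal V}(x,y,z,t)$ has no subalgebra isomorphic to $SR_6$. Concretely, there $(x+xy)t=xt+xyt\neq xt$ and $x^2\neq t^2$. You would need a subalgebra together with a congruence, as in Propositions~\ref{proab} and~\ref{proabc}. Proving such a congruence is compatible is the whole difficulty, and you do not supply one. Note also that \eqref{eq:16} fails in $SR_6$ at $(x,y,z,t)=(2,6,5,4)$, where $xy+yz+zt=3$ and $xt=1$. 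So $3$ corresponds to the path products, not to $xt$.

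The paper avoids any construction. Assume $SR_6\notin\mathcal V$ and take an inequality $\bu\succeq\bq$ of $\mathcal V$ that fails in $SR_6$. It holds in $S_c(ab)$, so Lemma~\ref{lem01} applies. Conditions (i)--(iii) are excluded, because the basis \eqref{eq:12}, \eqref{eq:28}, \eqref{eq:29}, $\sigma_n$ of $SR_6$ would then derive $\bu\succeq\bq$. Hence $\mathbb G_{\bu}$ is bipartite, and $\bq=xy$ with $x,y$ joined by an odd path of length at least three. Now substitute: $x\mapsto y_1$, $y\mapsto y_4$, the other vertices in the classes of $x$ and $y$ to $y_3$ and $y_2$ respectively, and the remaining variables to $y_2y_3$. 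This turns $\bu\succeq\bq$ into \eqref{eq:16}, a contradiction. The argument uses both hypotheses exactly where they are needed.
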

\begin{proof}
By the relative descriptions of the least members of the
two intervals, $\mathcal V$ does not satisfy \eqref{eq:16}.
Suppose, for a contradiction, that $SR_6\notin\mathcal V$.
Then there exists a nontrivial inequality
$\bu\succeq\bq$ that holds in $\mathcal V$ but fails in $SR_6$,
where $\bu=\bu_1+\cdots+\bu_n$ and $\bu_i,\bq\in X_c^+$.
Since $S_c(ab)\in\mathcal V$, this inequality holds in
$S_c(ab)$. Thus one of the four conditions in
Lemma~\ref{lem01} holds.

Recall that $SR_6$ satisfies \eqref{eq:12},
\eqref{eq:28}, \eqref{eq:29}, and $\sigma_n$ for all $n\geq1$.
If condition~$(i)$ holds, then, in $SR_6$,
\[
\bu\succeq\bu_i
\stackrel{\eqref{eq:29}}{\succeq}\bq
\]
for a summand $\bu_i$ of length at least three.
If condition~$(ii)$ holds, then
\[
\bu\succeq x+xy
\stackrel{\eqref{eq:28}}{\approx}x^2
\stackrel{\eqref{eq:12}}{\approx}x^3
\stackrel{\eqref{eq:29}}{\succeq}\bq.
\]
If condition~$(iii)$ holds, the corresponding $\sigma_m$
gives
\[
\bu\succeq x_1x_2\cdots x_{2m+1}
\stackrel{\eqref{eq:29}}{\succeq}\bq.
\]
For a loop, use $x^2\approx x^3$ instead.
Each possibility contradicts the choice of
$\bu\succeq\bq$.

Therefore, condition~$(iv)$ holds and conditions
$(i)$--$(iii)$ fail.
In particular,
\[
\bu=L_1(\bu)+L_2(\bu),\qquad
c(L_1(\bu))\cap c(L_2(\bu))=\emptyset,
\]
and $\mathbb G_{\bu}$ is bipartite.
Moreover, $\bq=xy$ for distinct variables $x,y$
joined by an odd path in $\mathbb G_{\bu}$.
Let $(A,B)$ be a bipartition with $x\in A$ and $y\in B$.
Since the inequality is nontrivial, $xy$ is not a summand
of $\bu$, so this path has length at least three.

Choose fresh variables $y_1,y_2,y_3,y_4$ and define
a substitution $\psi$ by
\[
\psi(z)=
\begin{cases}
y_1, & z=x,\\
y_4, & z=y,\\
y_3, & z\in A\setminus\{x\},\\
y_2, & z\in B\setminus\{y\},\\
y_2y_3, & \text{otherwise}.
\end{cases}
\]
Then
\[
\psi(\bu)\approx y_1y_2+y_2y_3+y_3y_4,
\qquad
\psi(\bq)=y_1y_4.
\]
Since $\mathcal V$ satisfies $\bu\succeq\bq$,
it also satisfies
\[
y_1y_2+y_2y_3+y_3y_4\succeq y_1y_4.
\]
This is \eqref{eq:16}, contradicting the choice of
$\mathcal V$. Hence $SR_6\in\mathcal V$.
\end{proof}

\subsection{Nonfinitely based intervals}

\begin{thm}\label{thmrelativeNFB}
Let $\mathcal A\leq\mathcal B$ be varieties of the same type.
If $\mathcal A$ is nonfinitely based but finitely based
within $\mathcal B$, then every variety in the interval
$[\mathcal A,\mathcal B]$ is nonfinitely based.
\end{thm}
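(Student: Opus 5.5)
The argument is a direct proof by contradiction, using only the definition of relative finite basedness and the compactness of equational logic. First fix what the hypothesis means. Saying that $\mathcal A$ is finitely based within $\mathcal B$ means there is a finite set $\Phi$ of identities such that $\mathcal A$ is exactly the class of members of $\mathcal B$ that satisfy $\Phi$; equivalently, $\mathcal A=\mathcal B\cap\mathsf{Mod}(\Phi)$. Now let $\mathcal V$ be a variety with $\mathcal A\leq\mathcal V\leq\mathcal B$, and suppose, aiming for a contradiction, that $\mathcal V$ is finitely based, say by a finite set $\Psi$. The goal is to show that $\Psi\cup\Phi$ is then a finite basis of $\mathcal A$, which contradicts the assumption that $\mathcal A$ is nonfinitely based.

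The key step is to show that $\mathcal A=\mathcal V\cap\mathsf{Mod}(\Phi)$. One inclusion is immediate: since $\mathcal A\leq\mathcal V$ and $\mathcal A$ satisfies $\Phi$, every member of $\mathcal A$ lies in $\mathcal V$ and satisfies $\Phi$. For the other inclusion, take an algebra $S\in\mathcal V$ satisfying $\Phi$. Because $\mathcal V\leq\mathcal B$, we have $S\in\mathcal B$, and so $S\in\mathcal B\cap\mathsf{Mod}(\Phi)=\mathcal A$. Since $\mathcal V=\mathsf{Mod}(\Psi)$, this gives
\[
\mathcal A=\mathsf{Mod}(\Psi)\cap\mathsf{Mod}(\Phi)=\mathsf{Mod}(\Psi\cup\Phi),
\]
and $\Psi\cup\Phi$ is finite. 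This is the required contradiction, so every $\mathcal V$ in the interval is nonfinitely based. Note that the case $\mathcal V=\mathcal A$ needs no separate treatment: it is the hypothesis itself, and it is also covered by the argument above.

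There is no real obstacle here; the only point needing care is the definition of relative finite basedness. If that notion is instead phrased via a possibly infinite basis $\Sigma$ of $\mathcal B$, that is, $\mathcal A$ is defined by $\Sigma\cup\Phi$ with $\Phi$ finite, then the same containment argument applies without change, because only the inclusion $\mathcal V\leq\mathcal B$ is used. No compactness argument or choice of a finite subset of $\Sigma$ is needed. In the intended applications, $\mathcal A$ will be $\mathsf{V}(S_c(ab))$ or $\mathsf{V}(S_c(ab),M_2)$, and Propositions~\ref{proabrelative} and \ref{proabrelative2} supply the single relative identity \eqref{eq:16}. The required nonfinite basis property of $\mathcal A$ would have to come from elsewhere in the paper.
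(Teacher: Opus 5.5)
Your proof is correct and follows the paper's own argument: the finite relative basis $\Phi$ of $\mathcal A$ in $\mathcal B$ also defines $\mathcal A$ inside any $\mathcal V\in[\mathcal A,\mathcal B]$, so a finite basis $\Psi$ of $\mathcal V$ would make $\Psi\cup\Phi$ a finite basis of $\mathcal A$. One correction to your closing remark: $\mathsf{V}(S_c(ab))$ and $\mathsf{V}(S_c(ab),M_2)$ are finitely based (Propositions~\ref{proabbasis} and~\ref{proabm}), so they cannot serve as $\mathcal A$; the paper actually applies the theorem with $\mathcal A=\mathsf{V}(SR_6)$, $\mathcal B=\mathsf{V}(S_7^0)$, and relative identity \eqref{eq:29} (Proposition~\ref{proSRrelative}).
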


\begin{proof}
Let $\Sigma$ be a finite set of identities defining
$\mathcal A$ within $\mathcal B$, and let
$\mathcal V\in[\mathcal A,\mathcal B]$.
Since $\mathcal A\leq\mathcal V\leq\mathcal B$,
the same set $\Sigma$ defines $\mathcal A$ within
$\mathcal V$.
If $\mathcal V$ had a finite equational basis $\Delta$,
then $\Delta\cup\Sigma$ would be a finite equational
basis of $\mathcal A$, a contradiction.
Hence $\mathcal V$ is nonfinitely based.
\end{proof}

\begin{pro}\label{proSR}
$\mathsf{V}(SR_6)$ is the commutative ai-semiring variety defined by the identities
\eqref{eq:12}, \eqref{eq:28}, \eqref{eq:29} and $\sigma_n$ for all $n \geq 1$.
\end{pro}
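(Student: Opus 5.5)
We must show that $\mathsf{V}(SR_6)$ is defined by \eqref{eq:12}, \eqref{eq:28}, \eqref{eq:29} and $\sigma_n$ ($n\geq1$). We argue in two halves: soundness in $SR_6$, then completeness via an explicit identity criterion for $SR_6$.

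\textbf{Soundness.} First we check that $SR_6$ satisfies the listed identities.

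\begin{itemize}
\item For \eqref{eq:12}, \eqref{eq:28} and \eqref{eq:29} we read the Cayley tables. Every product of three elements equals $1$, since each product lies in $\{1,3\}$ and $3$ annihilates everything. Also $x^2\in\{1,3\}$, with $x^2=1$ except that $1\cdot1=1$. Element $1$ is the top of the order, and $x^2$ and $x+xy$ agree by a finite check.
\item For $\sigma_n$, the left side $\bq^{(n)}$ has length $2n+1\geq3$, so its value is $1$. Hence we must show that $\bu^{(n)}$ evaluates to $1$ for every assignment. The nonzero-like products $xy=3$ occur only for the pairs $\{2,6\}$, $\{4,5\}$, $\{5,6\}$. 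The graph on $\{2,4,5,6\}$ with these edges is bipartite, with parts $\{2,5\}$ and $\{4,6\}$. An odd cycle of variables therefore cannot have every consecutive product equal to $3$; some summand equals $1$, and the sum is $1$.
\end{itemize}

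\textbf{Completeness.} Let $\bq\preceq\bu$ be a nontrivial inequality holding in $SR_6$, with $\bu=\bu_1+\cdots+\bu_n$. Since $S_c(ab)\in\mathsf{V}(SR_6)$ (we must confirm this from the tables, e.g.\ via the subalgebra $\{1,3,\dots\}$, or note that $SR_6$ satisfies everything \eqref{eq:12}, \eqref{eq:28}, \eqref{eq:29} imply, which $S_c(ab)$ also satisfies), the inequality holds in $S_c(ab)$. So one of the conditions of Lemma~\ref{lem01} holds. Conditions $(i)$--$(iii)$ are handled exactly as in the proof of Proposition~\ref{prop:strictcontains}. We obtain a summand of length $\geq3$, using \eqref{eq:28} with \eqref{eq:12}, or $\sigma_m$, or a loop with \eqref{eq:12}, and then \eqref{eq:29} yields $\bq$.

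The remaining case is condition $(iv)$ with $(i)$--$(iii)$ failing: $\bq=xy$ joined by an odd path of length $\geq3$ in a bipartite $\mathbb G_{\bu}$, with $L_1$ and $L_2$ disjoint in content. Here we show that the inequality actually fails in $SR_6$, so this case cannot occur. We take the bipartition $(A,B)$ with $x\in A$, $y\in B$ and assign values in the pattern of the path $2$--$6$--$5$--$4$. Concretely:
\begin{itemize}
\item $x\mapsto2$ and $y\mapsto4$;
\item the other vertices of $A$ go to $5$ and the other vertices of $B$ go to $6$;
\item variables occurring only in $L_1(\bu)$, or not in the graph at all, go to suitable values chosen so that each letter summand equals $1$ would be bad. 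Since the sum must avoid $1$, we instead need every summand of $\bu$ to avoid the value $1$ and the sum to not dominate $\bq$.
\end{itemize}
Under this assignment every edge of $\mathbb G_{\bu}$ evaluates to $3$: we have $2\cdot6=3$, $6\cdot5=3$, $5\cdot4=3$, and $xy$ is not an edge. Also $2\cdot4=1$ and $5\cdot 5=1$, but $\{5,5\}$ is not an edge by bipartiteness, so $\bu\mapsto 3+(\text{letters})$ and $\bq\mapsto 2\cdot4=1$. Letters from $L_1$ are sent to $3$. Then $\bu\mapsto3$, while $3+1=1\neq3$, so the inequality fails.

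\textbf{Main obstacle.} The main difficulty lies in this final assignment. One must check that all edges within $A\times B$, including those between $x$ and vertices of $B$ and between $y$ and vertices of $A$, land on the $3$-producing pairs. By the table, $2\cdot6=3$ and $4\cdot5=3$, so they do. This is precisely the reason $SR_6$ witnesses the failure of \eqref{eq:16}.
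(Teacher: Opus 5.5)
Your proof is correct. The paper itself gives no proof of this proposition; it is imported from the earlier work on $SR_6$. So there is nothing in the text to match line by line.

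What you have done is run the same scheme the paper uses for Proposition~\ref{prop:strictcontains}:
\begin{enumerate}
\item pass to $S_c(ab)$ and invoke Lemma~\ref{lem01};
\item discharge conditions $(i)$--$(iii)$ with \eqref{eq:28}, \eqref{eq:12}, $\sigma_m$ and \eqref{eq:29};
\item rule out the residual case $(iv)$.
\end{enumerate}
The one difference is how you close case $(iv)$. The paper substitutes to obtain \eqref{eq:16} in an abstract variety. You instead evaluate directly into $SR_6$ along the path $2$--$6$--$5$--$4$, which are the only pairs with product $3$. Your edge check is right: $2\cdot 6=5\cdot 6=5\cdot 4=3$, and $xy$ is not an edge. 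This is exactly the observation that $SR_6$ fails \eqref{eq:16}, and it makes your argument self-contained. Your soundness checks are also correct, in particular the bipartiteness argument for $\sigma_n$ (no odd closed walk in the path $2$--$6$--$5$--$4$).

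A few points in the write-up should be tidied.
\begin{itemize}
\item The membership $S_c(ab)\in\mathsf{V}(SR_6)$ must be justified by an actual subalgebra. For example, $\{1,3,4,5\}$ works, with $4,5,3,1$ playing the roles of $a,b,ab,0$ (or use $\{1,2,3,6\}$). Your fallback reason, that $S_c(ab)$ satisfies consequences of \eqref{eq:12}, \eqref{eq:28}, \eqref{eq:29}, says nothing about membership.
\item Your bullet on the variables of $L_1(\bu)$ is confused. Simply send them all to $3$. This is allowed because $c(L_1(\bu))\cap c(L_2(\bu))=\emptyset$. Then every summand of $\bu$ takes the value $3$, while $\bq=xy\mapsto 2\cdot 4=1$.
\item Every square in $SR_6$ equals $1$. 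Your sentence about $x^2\in\{1,3\}$ is garbled, though harmless.
\end{itemize}
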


\begin{pro}\label{proSRrelative}
The variety $\mathsf{V}(SR_6)$ is the subvariety of
$\mathsf{V}(S_7^0)$ defined by \eqref{eq:29}.
Consequently, every variety in the interval
\[
[\mathsf{V}(SR_6),\mathsf{V}(S_7^0)]
\]
is nonfinitely based.
\end{pro}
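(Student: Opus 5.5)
The plan has two parts. First we show that $\mathsf{V}(S_7^0)\cap[\eqref{eq:29}]=\mathsf{V}(SR_6)$. Then the nonfinite basis claim follows from Theorem~\ref{thmrelativeNFB}, applied with $\mathcal A=\mathsf{V}(SR_6)$ and $\mathcal B=\mathsf{V}(S_7^0)$. The hypotheses there are met because $\mathsf{V}(SR_6)$ is nonfinitely based (a limit variety by~\cite{lry}) and \eqref{eq:29} is a single identity.

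For the inclusion $\mathsf{V}(SR_6)\subseteq\mathsf{V}(S_7^0)$, I would check that $SR_6\in\mathsf{V}(S_7^0)$, exhibiting $SR_6$ as a subalgebra or homomorphic image of a small power of $S_7^0$. This is presumably known from~\cite{lry}, or is a finite check. By Proposition~\ref{proSR}, $SR_6$ satisfies \eqref{eq:29}, so $\mathsf{V}(SR_6)$ lies in the relative subvariety.

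For the converse, take $S\in\mathsf{V}(S_7^0)$ satisfying $t\preceq xyz$. By Proposition~\ref{proSR} it suffices to derive \eqref{eq:12}, \eqref{eq:28} and all $\sigma_n$ in $S$. Lemma~\ref{lem:S70-identities} already supplies \eqref{eq:12} and every $\sigma_n$, so only $x^2\approx x+xy$ remains, and I would get it in two steps.

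\textbf{Showing $x^2\approx y^2$.} By \eqref{eq:12}, $x^2\approx x^3\succeq y$ via \eqref{eq:29}. Hence $x^2\succeq y^2$, since $y^2=yyy\cdot$\ldots; more carefully, $x^3\succeq y^2$ follows by substituting $t\mapsto y^2$ in \eqref{eq:29}. By symmetry $x^2\approx y^2$, and this common value is an absorbing top element $\top$ with respect to $\preceq$ among the elements below it. Substituting $t\mapsto x^2y^2$ also gives $x^2y^2\preceq x^2$.

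\textbf{Showing $x+xy\approx x^2$.} The inequality $x+xy\preceq x^2$ holds because $x^2\approx x^3\succeq x$ and $x^3\succeq xy$, both from \eqref{eq:29}. For the reverse direction, use \eqref{eq:22}: $x+xy\approx x+xy^2$. Also $xy^2\approx x^2y^2$ should follow from \eqref{eq:21} together with $x^2y\approx x^2y^2\approx xy^2$. Indeed, \eqref{eq:29} gives $x^2y\succeq x^2y^2$ (as $x^2y$ has length three) and $x^2y^2\succeq x^2y$, so $x^2y\approx x^2y^2$, and symmetrically $xy^2\approx x^2y^2$. Then $x+xy\succeq xy^2\approx x^2y^2$, and $x^2y^2$ has length four, so $x^2y^2\succeq x$; we need $x^2y^2\succeq x^2$. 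Substituting $t\mapsto x^2$ in \eqref{eq:29} with the length-three word $x\cdot y\cdot y$ gives $xy^2\succeq x^2$. Hence $x+xy\succeq x^2$, which yields \eqref{eq:28}.

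The main obstacle I expect is this last derivation, which must produce $x^2$ from the summands $x+xy$ using only \eqref{eq:22}, \eqref{eq:29} and \eqref{eq:12}. The route above through \eqref{eq:22} seems to work, but it needs a careful check that \eqref{eq:22} is a genuine identity of $S_7^0$ as stated in Lemma~\ref{lem:S70-identities}. The other possible difficulty is confirming the membership $SR_6\in\mathsf{V}(S_7^0)$, which I would settle by citing~\cite{lry} or by a direct embedding.
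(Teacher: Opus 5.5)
Your proposal is correct and follows the paper's proof: \eqref{eq:12} and \eqref{eq:29} make every square the greatest element, then \eqref{eq:22} gives $x+xy\approx x+xy^2\succeq x^2$ and hence \eqref{eq:28}, and the result follows from the basis in Proposition~\ref{proSR} and Theorem~\ref{thmrelativeNFB}. Your detour through $x^2y\approx x^2y^2$ is unnecessary; for the membership $SR_6\in\mathsf{V}(S_7^0)$, which you left to a citation or finite check, the paper likewise only asserts $SR_6\in\mathsf{V}(S_c(ab)^0)\leq\mathsf{V}(S_7^0)$ without proof.
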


\begin{proof}
We know that
$SR_6\in\mathsf{V}(S_c(ab)^0)\leq\mathsf{V}(S_7^0)$
and that $SR_6$ satisfies \eqref{eq:29}.
Conversely, suppose that $S\in\mathsf{V}(S_7^0)$
satisfies \eqref{eq:29}.
Then
\[
x^2
\stackrel{\eqref{eq:12}}{\approx}
x^3
\stackrel{\eqref{eq:29}}{\succeq}
t.
\]
Thus every square is the greatest element of $S$.
Using \eqref{eq:22}, we obtain
\[
x+xy
\stackrel{\eqref{eq:22}}{\approx}
x+xy^2
\stackrel{\eqref{eq:29}}{\succeq}
x^2
\succeq x+xy.
\]
Hence $S$ satisfies \eqref{eq:28}.
Since \eqref{eq:2}, \eqref{eq:12},
\eqref{eq:28}, \eqref{eq:29}, and $\sigma_n$
for all $n\geq1$ form an equational basis of $SR_6$,
we obtain $S\in\mathsf{V}(SR_6)$.

Therefore, $\mathsf{V}(SR_6)$ is finitely based within
$\mathsf{V}(S_7^0)$.
Since $\mathsf{V}(SR_6)$ is nonfinitely based,
Theorem~\ref{thmrelativeNFB} completes the proof.
\end{proof}

\begin{cor}\label{cor:twointervals}
Each of the intervals
\[
[\mathsf{V}(S_c(ab)),\mathsf{V}(S_c(ab)^0)],
\qquad
[\mathsf{V}(M_2,S_c(ab)),
 \mathsf{V}(M_2^0,S_c(ab)^0)]
\]
contains exactly one finitely based variety, namely its
least member.
\end{cor}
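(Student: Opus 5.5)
The corollary splits into two claims for each interval. First, the least member is finitely based. Second, every nonleast member is nonfinitely based.

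For the least members, finite basedness is already on record. $\mathsf{V}(S_c(ab))$ has the finite basis \eqref{eq:12}, \eqref{eq:16}, \eqref{eq:28}, \eqref{eq:29} given in Proposition~\ref{proabbasis}, and $\mathsf{V}(S_c(ab),M_2)$ has the finite basis of Proposition~\ref{proabm}. Since $\mathsf{V}(M_2,S_c(ab))=\mathsf{V}(S_c(ab),M_2)$, nothing further is needed for this half.

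For a nonleast member $\mathcal V$ of either interval, the plan is to place $\mathcal V$ inside the interval $[\mathsf{V}(SR_6),\mathsf{V}(S_7^0)]$ and then apply Proposition~\ref{proSRrelative}.
\begin{enumerate}
\item Proposition~\ref{prop:strictcontains} gives $SR_6\in\mathcal V$, so $\mathsf{V}(SR_6)\leq\mathcal V$.
\item For the upper bound, $\mathcal V\leq\mathsf{V}(S_c(ab)^0)$ or $\mathcal V\leq\mathsf{V}(M_2^0,S_c(ab)^0)$, so it suffices to show $\mathsf{V}(M_2^0,S_c(ab)^0)\leq\mathsf{V}(S_7^0)$.
\item Proposition~\ref{proSRrelative} then says every member of $[\mathsf{V}(SR_6),\mathsf{V}(S_7^0)]$ is nonfinitely based, and this finishes the argument.
\end{enumerate}

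The only point needing care is step~2. It reduces to showing $M_2,S_c(ab)\in\mathsf{V}(S_7)$. This is part of the known lattice $\mathcal L(\mathsf{V}(S_7))$ from \cite{gmrz}, and is in any case implicit in the fibre setup of the introduction, where these intervals arise as fibres of $\varphi$ over subvarieties of $\mathsf{V}(S_7)$. I would also check it directly: $S_7$ should contain copies of $M_2$ and $S_c(ab)$, or at least these should be homomorphic images of subalgebras of $S_7$.

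Given those memberships, take zero extensions. If $A$ is a subalgebra of $S_7$, then $A\cup\{0\}$ is a subalgebra of $S_7^0$, and $0$ stays the adjoined absorbing top element. If instead $A$ is a homomorphic image of a subalgebra $B\leq S_7$, then $A^0$ is a homomorphic image of $B^0$: extend the homomorphism by sending $0$ to $0$, which is compatible with both operations because $0$ is absorbing for multiplication and top for addition. Either way $M_2^0,S_c(ab)^0\in\mathsf{V}(S_7^0)$. I expect this verification of membership inside $\mathsf{V}(S_7^0)$ to be the main, though routine, obstacle.
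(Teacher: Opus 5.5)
Your proof is the same as the paper's. The least members are finitely based by Propositions~\ref{proabbasis} and~\ref{proabm}, and every other member contains $SR_6$ by Proposition~\ref{prop:strictcontains}, so it lies in $[\mathsf{V}(SR_6),\mathsf{V}(S_7^0)]$ and is nonfinitely based by Proposition~\ref{proSRrelative}. The paper simply takes $\mathcal V\leq\mathsf{V}(S_7^0)$ as part of its setting. If you do verify it directly, the four-element $S_c(ab)$ cannot be a homomorphic image of a subalgebra of the three-element $S_7$, so you also need that $(\prod_i A_i)^0$ embeds in $\prod_i A_i^0$. Separately, in the paper's convention the adjoined $0$ is the least element, not the top; this does not affect your argument.
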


\begin{proof}
The varieties $\mathsf{V}(S_c(ab))$ and
$\mathsf{V}(M_2,S_c(ab))$ are finitely based by the
equational bases established above.
Let $\mathcal V$ be any other member of either interval.
By Proposition~\ref{prop:strictcontains},
$SR_6\in\mathcal V$. Hence
\[
\mathsf{V}(SR_6)\leq\mathcal V
\leq\mathsf{V}(S_7^0).
\]
Proposition~\ref{proSRrelative} now implies that
$\mathcal V$ is nonfinitely based.
\end{proof}

\subsection{Omission criteria in $\mathsf{V}(S_7^0)$}

\begin{pro}\label{pro25032101}
Let $\mathcal{V}$ be a subvariety of $\mathsf{V}(S_7^0)$.
Then $\mathcal{V}$ does not contain $S_c(a)$
if and only if $\mathcal{V}$ satisfies the identity
\eqref{eq:1}.
\end{pro}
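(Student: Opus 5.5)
The claim is that, inside $\mathsf{V}(S_7^0)$, omitting $S_c(a)$ is equivalent to $x^2\approx x$. I would prove the two implications separately.

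\emph{Easy direction.} Here $S_c(a)=\{a,0\}$ with $a^2=0$, and $a+0=0$. Then $a^2=0\neq a$, so $S_c(a)$ fails \eqref{eq:1}. Hence any $\mathcal V$ satisfying \eqref{eq:1} cannot contain $S_c(a)$.

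\emph{Hard direction: setup.} Suppose $S_c(a)\notin\mathcal V$ but $\mathcal V$ fails \eqref{eq:1}. Then some $S\in\mathcal V$ has an element $b$ with $b^2\neq b$. The goal is to find, inside a member of $\mathcal V$, a two-element subalgebra isomorphic to $S_c(a)$. The useful identities of $S_7^0$ from Lemma~\ref{lem:S70-identities} are:
\begin{itemize}
\item \eqref{eq:8}, $x\preceq x^2$, so $b<b^2$ strictly;
\item \eqref{eq:12}, $x^2\approx x^3$, so $b^2b=b^3=b^2$ and $(b^2)^2=b^4=b^2$;
\item \eqref{eq:2}, commutativity.
\end{itemize}

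\emph{Candidate subalgebra.} Take the pair $\{b,b^2\}$. From the relations above:
\begin{itemize}
\item addition: $b+b=b$, $b^2+b^2=b^2$, and $b+b^2=b^2$, the last because $b\le b^2$;
\item multiplication: $bb=b^2$, $bb^2=b^2b=b^2$, and $b^2b^2=b^2$.
\end{itemize}
So $\{b,b^2\}$ is closed under both operations. The map $a\mapsto b$, $0\mapsto b^2$ is an isomorphism onto $S_c(a)$: $0$ is the top element and absorbing, and $a\cdot a=0$. This contradicts $S_c(a)\notin\mathcal V$, because varieties are closed under subalgebras. Therefore $\mathcal V$ satisfies \eqref{eq:1}.

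\emph{Main obstacle.} The only step that needs care is checking that \eqref{eq:8} and \eqref{eq:12} really hold in $S_7^0$ and so in every member of $\mathcal V$. This is supplied by Lemma~\ref{lem:S70-identities}, so I expect no real difficulty. I would also note explicitly that the two elements $b$ and $b^2$ are distinct, which is exactly the assumption $b^2\neq b$.
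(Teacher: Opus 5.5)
Your proof is correct and follows the paper's argument: the forward direction because $S_c(a)$ fails \eqref{eq:1}, and the converse by using \eqref{eq:8} and \eqref{eq:12} to show that $\{b,b^2\}$ is a subalgebra isomorphic to $S_c(a)$. Your explicit checks of closure and of the isomorphism fill in the details that the paper's one-line proof leaves to the reader.
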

\begin{proof}
Suppose that $\mathcal{V}$ satisfies~\eqref{eq:1}.
It is easy to see that $S_c(a)$ does not satisfy~\eqref{eq:1}
and so $S_c(a)$ is not contained in $\mathcal{V}$.
Conversely, assume that $\mathcal{V}$ does not satisfy~\eqref{eq:1}.
Then there exists a semiring $S$ in $\mathcal{V}$ such that $a^2\neq a$ for some $a\in S$.
Since the identities~\eqref{eq:12} and~\eqref{eq:8} are satisfied by $S_7^0$,
it follows that $\{a^2, a\}$ is isomorphic to $S_c(a)$.
Thus $\mathcal{V}$ contains $S_c(a)$ as required.
\end{proof}

We shall use $\mathbf{I}$ to denote the subvariety of $\mathsf{V}(S_7^0)$ defined by the identity~\eqref{eq:1}.
\begin{pro}\label{pro25032110}
The variety $\mathbf{I}$ has $5$ subvarieties\up:
$\mathsf{V}(M_2^0)$, $\mathsf{V}(M_2, D_2)$, $\mathsf{V}(M_2)$, $\mathsf{V}(D_2)$ and the trivial variety $\mathbf{T}$.
\end{pro}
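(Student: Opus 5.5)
The plan is to first identify $\mathbf I$ with a known variety and then read off its subvarieties from the omission criteria. Since $\mathbf I$ is the subvariety of $\mathsf{V}(S_7^0)$ defined by \eqref{eq:1}, I would show $\mathbf I=\mathsf{V}(M_2^0)$. The inclusion $\mathsf{V}(M_2^0)\leq\mathbf I$ holds because $M_2^0\in\mathsf{V}(S_7^0)$ (it is a subsemiring or quotient arising in $S_7^0$, and in any case it sits in $\mathsf{V}(S_c(ab)^0,M_2^0)\leq\mathsf{V}(S_7^0)$ by the earlier discussion) and $M_2^0$ is multiplicatively idempotent. For the reverse inclusion, take a nontrivial inequality $\bq\preceq\bu$ valid in $M_2^0$. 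By the zero-extension characterization already used in Proposition~\ref{proab0m0}, this means $c(\bq)=\bigcup_{i\le k}c(\bu_i)$ for the summands $\bu_1,\dots,\bu_k$ of $D_{\bq}(\bu)$. In $\mathbf I$ we have \eqref{eq:1}, \eqref{eq:2}, \eqref{eq:8} and \eqref{eq:23}. Combining \eqref{eq:1} with \eqref{eq:23} gives $x+y\succeq xy$. Iterating, $\bu\succeq D_{\bq}(\bu)\succeq\bu_1\cdots\bu_k\approx\bq$, using idempotency and commutativity to collapse the product to $\bq$. So $\mathbf I\models\bq\preceq\bu$, and $\mathbf I=\mathsf{V}(M_2^0)$.

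Next I would classify the subvarieties of $\mathsf{V}(M_2^0)$. Let $\mathcal V\leq\mathbf I$. If $M_2^0\in\mathcal V$, then $\mathcal V=\mathsf{V}(M_2^0)$. Otherwise Proposition~\ref{prosmallexclusions}(2) applies, since $\mathbf I\leq\mathsf{V}(M_2^0,T_2^0)$: indeed \eqref{eq:2}, \eqref{eq:6}--\eqref{eq:8} follow from $x+y\succeq xy$ and \eqref{eq:1}. It gives \eqref{eq:omitM20}, which under \eqref{eq:1} reads $x+yz\succeq xy$, i.e.\ \eqref{eq:4}. Together with \eqref{eq:1}--\eqref{eq:3}, Proposition~\ref{promd} yields $\mathcal V\leq\mathsf{V}(M_2,D_2)$.

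Finally I would determine the subvarieties of $\mathsf{V}(M_2,D_2)$, treating $M_2$ and $D_2$ separately.
\begin{itemize}
\item If $D_2\notin\mathcal V$, Proposition~\ref{prosmallexclusions}(3) gives $xy\approx x^2+y^2\approx x+y$, and this together with \eqref{eq:1} defines $\mathsf{V}(M_2)$ (by Lemma~\ref{lem:small-tests}(1), by the same collapse argument).
\item If $M_2\notin\mathcal V$, I need an omission identity for $M_2$. The natural candidate is $x\preceq xy$. When it fails, some $a\not\leq ab$, and then $\{ab,a+ab\}$ should form a copy of $M_2$. Once $x\preceq xy$ holds, Lemma~\ref{lem:small-tests}(2) together with \eqref{eq:4} gives $\mathcal V\leq\mathsf{V}(D_2)$.
\item Since $M_2$ and $D_2$ are two-element and hence generate atoms, the cases with neither algebra present give $\mathbf T$.
\end{itemize}
This yields exactly the five listed varieties, which are distinct by Lemma~\ref{lem:small-tests}.

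I expect the main obstacle to be the $M_2$-omission step: checking that $\{ab,a+ab\}$ really is a subalgebra isomorphic to $M_2$ using only the identities of $\mathbf I$. The same care is needed in confirming that $M_2^0\in\mathsf{V}(S_7^0)$.
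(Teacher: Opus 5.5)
\textbf{The gap: the $M_2$-omission step has the direction reversed.} The rest of your route is sound. The identification $\mathbf I=\mathsf V(M_2^0)$ via the zero-extension characterization works, as does the reduction to $\mathsf V(M_2,D_2)$ via Proposition~\ref{prosmallexclusions}(2) and Proposition~\ref{promd}, and so does the $D_2$-omission bullet. Your own first bullet shows $M_2\models xy\approx x+y$. Hence $x\preceq xy$ \emph{holds} in $M_2$ and cannot detect its absence. It is $D_2$ that fails it; under \eqref{eq:1} it is exactly the $D_2$-omission identity \eqref{eq:25}.

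Your witness confirms this. If $a\not\leq ab$, then $ab<a+ab$ and $ab(a+ab)=a^2b+a^2b^2=ab$. So the product of the two elements is the smaller one, and $\{ab,a+ab\}\cong D_2$, not $M_2$. Likewise, adding $x\preceq xy$ to the basis \eqref{eq:1}--\eqref{eq:4} gives $x+y\preceq xy\preceq x+y$, which defines $\mathsf V(M_2)$, not $\mathsf V(D_2)$. As written, the case $M_2\notin\mathcal V$, $D_2\in\mathcal V$ is not handled. Nothing excludes a variety strictly between $\mathsf V(D_2)$ and $\mathsf V(M_2,D_2)$.

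\textbf{The repair.} Use the dual inequality $xy\preceq x$, which is \eqref{eq:24} read under \eqref{eq:1} (Proposition~\ref{pro25032102}). If $ab\not\leq a$, then $a<a+ab$ and $a(a+ab)=a+ab=(a+ab)^2$, so $\{a,a+ab\}\cong M_2$. Once $xy\preceq x$ holds, any $D_2$-inequality with $c(\bu_i)\subseteq c(\bq)$ follows from $\bu\succeq\bu_i\succeq\bu_i\bq\approx\bq$. So $\mathcal V\leq\mathsf V(D_2)$, and you do not even need \eqref{eq:4}.

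\textbf{A smaller point.} ``Two-element, hence an atom'' is not a valid general principle. The two-element unary algebra with $f(x)=1-x$ generates a variety containing the proper nontrivial subvariety defined by $f(x)\approx x$. Here the conclusion is true for a concrete reason: any $a<b$ in a nontrivial member of $\mathsf V(M_2)$, resp.\ $\mathsf V(D_2)$, spans a two-element subalgebra isomorphic to $M_2$, resp.\ $D_2$.

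Once repaired, your argument is more self-contained than the paper's. The paper simply cites that \eqref{eq:1}, \eqref{eq:2} axiomatize $\mathsf V(M_2^0)$ and relies on its known five-element subvariety lattice.
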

\begin{proof}
One can show that $\mathbf{I}$ coincides with the ai-semiring variety defined by identities \eqref{eq:1} and \eqref{eq:2},
which is an equational basis of $\mathsf{V}(M_2^0)$. Indeed, we have
\[
\mathsf{V}(S_7^0) \wedge [\eqref{eq:1}]
=\mathsf{V}(S_7^0) \wedge [\eqref{eq:1}, \eqref{eq:2}]=\mathsf{V}(S_7^0) \wedge \mathsf{V}(M_2^0)=\mathsf{V}(M_2^0).
\]
So the required result is true.
\end{proof}

\begin{remark}
$\mathsf{V}(M_2^0)$ is the subvariety of $\mathsf{V}(S_7^0)$ defined by the identity \eqref{eq:1}.
\end{remark}

\begin{pro}\label{pro25032102}
Let $\mathcal{V}$ be a subvariety of $\mathsf{V}(S_7^0)$.
Then $\mathcal{V}$ does not contain $M_2$
if and only if $\mathcal{V}$ satisfies the identity
\begin{equation}\label{eq:24}
x^2\approx x^2+x^2y^2.
\end{equation}
\end{pro}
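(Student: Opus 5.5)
Both implications are handled separately, in the same style as Proposition~\ref{pro25032101}. For the easy direction, we note that $M_2$ fails \eqref{eq:24}. Here $M_2=\{0,1\}$ with $+$ the join ($0\leq 1$) and $\cdot$ the meet, and it is multiplicatively idempotent. Taking $x=1$ and $y=0$ gives $x^2=1$, while $x^2y^2=0$. Since $0\leq 1$, the identity reads $1\approx 1+0=1$, so it actually holds in this orientation. We must therefore be careful with the order convention. Testing $x=0$, $y=1$ gives $0$ versus $0+1=1$, which fails. So $M_2\not\models$ \eqref{eq:24}, and any $\mathcal V$ satisfying \eqref{eq:24} omits $M_2$. (If the paper's $M_2$ has the opposite order, the symmetric assignment works; in either case one assignment shows the failure, which I would check against the table of $M_2$.)

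For the converse, suppose that $\mathcal V$ fails \eqref{eq:24}. Then there are $S\in\mathcal V$ and $a,b\in S$ with $a^2b^2\not\leq a^2$. I would pass to squares, setting $e=a^2$ and $f=a^2b^2$. By \eqref{eq:12} and \eqref{eq:2} we have $e^2=a^4=a^2=e$, $f^2=f$ and $ef=a^4b^2=a^2b^2=f$. I then plan to show that $\{f,\ e+f\}$ is a two-element subalgebra isomorphic to $M_2$. First, $f\neq e+f$, since otherwise $e\leq f$, and $f\not\leq e$ by hypothesis; to get two distinct comparable elements I use $f<e+f$, because $e\leq f$ would give $e+f=f$. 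Next, products: $f^2=f$, $f(e+f)=f+f=f$, and $(e+f)^2=e+2f+f=e+f$. Sums are immediate. Hence $\{f,e+f\}$ is closed, it is a chain $f<e+f$ under the natural order, and multiplication acts as meet. This is exactly $M_2$, with the order matching the $M_2$ convention (bottom $f$ absorbing under product, top $e+f$ multiplicatively neutral on the pair).

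The main obstacle is not the subalgebra closure, which is routine from \eqref{eq:12}, but pinning down the correct form of the witness. Replacing $a,b$ by their squares is essential: without idempotence the two-element set need not close, and \eqref{eq:12} is exactly what makes squares idempotent in $\mathsf{V}(S_7^0)$. I would also double-check that $f<e+f$ genuinely holds, rather than the degenerate case $e+f=f$, which is excluded precisely because the failure of \eqref{eq:24} means $e\neq e+f$. Combining both directions gives $M_2\notin\mathcal V$ if and only if $\mathcal V\models$ \eqref{eq:24}.
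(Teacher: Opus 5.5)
There is a genuine gap, and it comes from misidentifying $M_2$. In this paper $M_2$ is the two-element ai-semiring in which multiplication coincides with addition (both are the join). This is forced by Lemma~\ref{lem:small-tests}(1): a word evaluates to the join of its variables. The structure you describe, with join and meet, is the distributive lattice $D_2$, and it \emph{satisfies} \eqref{eq:24}, because $x^2y^2=x\wedge y\leq x=x^2$. So your test $x=0$, $y=1$, giving $0$ versus $0+1=1$, is an arithmetic slip under your own definition, and the appeal to the opposite order changes nothing, since that gives an isomorphic lattice. The computation is correct only for the true $M_2$, where $x^2y^2=x\vee y$.

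The converse has a real failure. The failure of \eqref{eq:24} gives $e\neq e+f$, that is, $f\not\leq e$. It does not exclude $e\leq f$, and in that case $e+f=f$, so your set $\{f,e+f\}$ is a singleton. Your argument that $f\neq e+f$ (otherwise $e\leq f$, while $f\not\leq e$) produces no contradiction. Your closing remark conflates $e\neq e+f$ with $f\neq e+f$. This degenerate case is not exotic: in $M_2$ itself, with $a=0$ and $b=1$, one has $e=0<f=1$. Moreover, when $\{f,e+f\}$ does have two elements, your own computation shows that the bottom element $f$ is multiplicatively absorbing. The pair is then a copy of $D_2$; it is exactly the witness the paper uses for $D_2$ in Proposition~\ref{pro25032103}, under the different hypothesis $e\not\leq f$.

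The correct witness, which is the pair the paper uses, is $\{e,e+f\}=\{a^2,a^2+a^2b^2\}$. Its two elements are distinct precisely because \eqref{eq:24} fails. Moreover
\[
e^2=e,\qquad e(e+f)=e+ef=e+f,\qquad (e+f)^2=e+f,
\]
so multiplication equals addition on this two-element chain, and it is isomorphic to $M_2$.
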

\begin{proof}
Suppose that $\mathcal{V}$ satisfies the identity (\ref{eq:24}). Since this identity
does not hold in $M_2$, it follows immediately that $\mathcal{V}$ does not contain $M_2$.
Conversely, assume that~$\mathcal{V}$ does not satisfy the identity (\ref{eq:24}).
Then there exists $S$ in $\mathcal{V}$ such that $a^2\neq a^2+a^2b^2$ for some $a, b \in S$.
By using the identities~\eqref{eq:12} and\eqref{eq:2},
it is easy to verify that $\{a^2, a^2+a^2b^2\}$ is isomorphic to $M_2$.
Hence $\mathcal{V}$ contains $M_2$ as required.
\end{proof}

\begin{pro}\label{pro25032103}
Let $\mathcal{V}$ be a subvariety of $\mathsf{V}(S_7^0)$.
Then $\mathcal{V}$ does not contain $D_2$ if and only if
$\mathcal{V}$ satisfies the identity
\begin{equation}\label{eq:25}
x^2y^2 \approx x^2y^2+x^2.
\end{equation}
\end{pro}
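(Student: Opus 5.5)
The proof follows the pattern of Propositions~\ref{pro25032101} and \ref{pro25032102}. One direction is immediate: in $D_2=\{0,1\}$ (with $0<1$ in the natural order and multiplication a semilattice operation), substitute $x\mapsto 1$ and $y\mapsto 0$. The left side of \eqref{eq:25} becomes $x^2y^2=0$, while the right side becomes $0+1=1$. So $D_2$ fails \eqref{eq:25}, and any $\mathcal V$ satisfying \eqref{eq:25} omits $D_2$.

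For the converse, assume $\mathcal V$ fails \eqref{eq:25}. Then there are $S\in\mathcal V$ and $a,b\in S$ with $a^2b^2\neq a^2b^2+a^2$. Set
\[
e=a^2b^2,\qquad f=a^2b^2+a^2 .
\]
Then $e<f$ in the natural order. The aim is to show that $\{e,f\}$ is a subalgebra isomorphic to $D_2$. That requires three facts: $e^2=e$, $f^2=f$, and $ef=e$. Closure under addition is automatic, since $e+f=f$.

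The computations use the identities of $S_7^0$ from Lemma~\ref{lem:S70-identities}, namely \eqref{eq:2}, \eqref{eq:8}, \eqref{eq:12} and \eqref{eq:21}--\eqref{eq:23}.

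First, \eqref{eq:12} gives $x^4\approx x^2$. Hence $e^2=a^4b^4=a^2b^2=e$.

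Next, expand $f^2=(a^2b^2+a^2)^2$ to get $a^2b^2+a^4b^2+a^4$. By \eqref{eq:12} this collapses to $a^2b^2+a^2b^2+a^2=f$.

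Finally, expand $ef=a^2b^2(a^2b^2+a^2)$ to get $a^4b^4+a^4b^2$. By \eqref{eq:12} this equals $a^2b^2+a^2b^2=e$.

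So $\{e,f\}$ is closed under both operations. Addition is the join with $e<f$, multiplication is the meet, and $e\ne f$. This is exactly $D_2$, so $D_2\in\mathcal V$.

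I expect no real obstacle. The one check needed is that $D_2$ is the two-element semiring whose addition is the join and whose multiplication is the meet of the same order. That is consistent with Lemma~\ref{lem:small-tests}(2) and with the proof of Proposition~\ref{prosmallexclusions}(3), where $\{ab,a^2+b^2\}$ with $ab<a^2+b^2$ and $ab(a^2+b^2)=ab$ was identified with $D_2$. The only identity actually used is $x^2\approx x^3$ (giving $x^4\approx x^2$), together with commutativity.
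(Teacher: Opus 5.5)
Your proposal is correct and matches the paper's proof. One direction is the failure of \eqref{eq:25} in $D_2$; for the converse you show that $\{a^2b^2,\ a^2b^2+a^2\}$ is a subalgebra isomorphic to $D_2$ using only \eqref{eq:12} and commutativity, which is exactly the verification the paper calls ``easy'' and you have simply written out.
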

\begin{proof}
Assume that $\mathcal{V}$ satisfies the identity (\ref{eq:25}). Since this identity
does not hold in $D_2$, it follows immediately that $\mathcal{V}$ does not contain $D_2$.
Conversely, assume that~$\mathcal{V}$ does not satisfy the identity (\ref{eq:25}).
Then there exists $S$ in $\mathcal{V}$ such that $a^2b^2 \neq a^2b^2+a^2$ for some $a, b \in S$.
By using the identities~\eqref{eq:12} and~\eqref{eq:2},
it is easy to verify that $\{a^2b^2, a^2b^2+a^2\}$ is isomorphic to $D_2$.
So $\mathcal{V}$ contains $D_2$.
\end{proof}

\begin{pro}\label{proab}
Let $\mathcal V\leq\mathsf{V}(S_7^0)$.
Then $S_c(ab)\notin\mathcal V$ if and only if
$\mathcal V$ satisfies
\begin{equation}\label{eq:26}
(xy)^2\approx xy.
\end{equation}
\end{pro}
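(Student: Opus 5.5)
The plan follows the pattern of Propositions~\ref{pro25032101}--\ref{pro25032103}. For the easy direction, take the assignment $x\mapsto a$, $y\mapsto b$ in $S_c(ab)$. Then $(xy)^2\mapsto (ab)^2=0$, while $xy\mapsto ab\neq 0$. So \eqref{eq:26} fails in $S_c(ab)$, and any $\mathcal V$ satisfying \eqref{eq:26} omits $S_c(ab)$.

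For the converse, suppose $\mathcal V$ fails \eqref{eq:26}. Pick $S\in\mathcal V$ and $a,b\in S$ with $ab\neq (ab)^2$, and build a copy of $S_c(ab)$ inside $S$. Every element of $S$ satisfies the identities of $S_7^0$ from Lemma~\ref{lem:S70-identities}, namely \eqref{eq:2}, \eqref{eq:8}, \eqref{eq:12} and \eqref{eq:21}--\eqref{eq:23}. The natural candidate set is
\[
\{a,\ b,\ ab,\ (ab)^2\},
\]
with $(ab)^2$ playing the role of $0$, or some modification of it such as $a+(ab)^2$, $b+(ab)^2$, $ab+(ab)^2$ together with $(ab)^2$.

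First, I would show that $(ab)^2$ is absorbing and maximal. From \eqref{eq:12} we get $a^2\cdot ab=a^3b$. From \eqref{eq:21} and \eqref{eq:22} one should derive $a^2b^2\geq a^2$ type relations, and $(ab)^2\geq a, b, ab$, presumably via \eqref{eq:23} with $x=ab$: this gives $(ab)^2+y\succeq (ab)^2y^2$. If $a$ and $b$ themselves are not below $(ab)^2$, I would replace them by $a'=a+(ab)^2$ and $b'=b+(ab)^2$, which does not change the relevant products modulo the top element.

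Second, I would check the multiplication table. We need $a'^2=b'^2=(ab)^2$, $a'b'=ab+(ab)^2$, and every product of three or more generators equal to $(ab)^2$. The additions must be flat: $a'+b'=(ab)^2$, and so on. Here \eqref{eq:8} and \eqref{eq:12} give $x\leq x^2=x^3$, and \eqref{eq:21} gives $a^2b^2=a^2b+ab^2$.

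Third, I would verify that the four elements are distinct. The key point is that $ab+(ab)^2\neq (ab)^2$. If instead $ab\leq (ab)^2$ held in general, then together with $(ab)^2\leq ab$ this would force \eqref{eq:26}. So I expect to argue directly: $(xy)^2\preceq xy$ is derivable from the $S_7^0$ identities, since \eqref{eq:8} gives $xy\preceq (xy)^2$ and the other direction should follow similarly. This means failure of \eqref{eq:26} is exactly $ab\not\geq (ab)^2$, and the subalgebra built from it is $S_c(ab)$.

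The main obstacle is producing $a^2\neq (ab)^2$-type separations and the flat sums. Possibly a cleaner route avoids constructing the subalgebra explicitly. In that route, I would show that the subvariety of $\mathsf V(S_7^0)$ defined by \eqref{eq:26} is generated by the known semirings not involving $S_c(ab)$, using the known lattice $\mathcal L(\mathsf V(S_7))$ from \cite{gmrz} and the fibre analysis. I would try the direct subalgebra construction first.
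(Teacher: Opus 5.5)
Your easy direction is fine, but the converse has a genuine gap. You try to realize $S_c(ab)$ as a subalgebra of $S$ built from $a$, $b$ and $(ab)^2$, and this cannot work in general. Take $S=S_7=\{1,s,\infty\}$, where $1$ is the identity, $s^2=\infty$, and $\infty$ is absorbing and greatest, with $a=s$ and $b=1$. Then $ab=s<\infty=(ab)^2$, so \eqref{eq:26} fails. Yet $S_7$ has only three elements and cannot contain the four-element $S_c(ab)$.

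The individual steps also fail:
\begin{itemize}
\item $(ab)^2\geq a$ does not follow. In $S_7^0\times S_7^0$ with $a=(s,1)$ and $b=(1,0)$, where $0$ is the adjoined element, one has $(ab)^2=(\infty,0)\not\geq a$. Moreover $b+(ab)^2=(ab)^2$, so your modified generators collapse.
\item Your ``key point'' $ab+(ab)^2\neq(ab)^2$ is false outright, because \eqref{eq:8} gives $ab\leq(ab)^2$. So the candidate $ab+(ab)^2$ always coincides with the candidate zero.
\item $(xy)^2\preceq xy$ is not derivable from the identities of $S_7^0$; otherwise \eqref{eq:26} would hold in $S_7^0$, contradicting the example above.
\end{itemize}
Your fallback does not help either. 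Knowing which variety \eqref{eq:26} defines says nothing about why a variety failing \eqref{eq:26} must contain $S_c(ab)$. The fibre description in Proposition~\ref{profibres} itself relies on this proposition, so using it here is circular.

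The missing idea is to take a homomorphic image of $B=\langle a,b\rangle$ rather than a subalgebra, and to split according to whether $ab$ equals $a^2b$ or $ab^2$. Every element of $B$ is a sum of $a,b,ab,a^2,b^2,a^2b,ab^2,a^2b^2$.
\begin{itemize}
\item If $ab\neq a^2b$ and $ab\neq ab^2$, one checks from \eqref{eq:2}, \eqref{eq:8}, \eqref{eq:12} and \eqref{eq:21}--\eqref{eq:23} that such a sum equals $a$, $b$ or $ab$ only when all its summands are $a$, $b$ or $ab$, respectively. Hence $B\setminus\{a,b,ab\}$ absorbs both operations. Together with the singletons $\{a\},\{b\},\{ab\}$, it forms the classes of a congruence whose quotient is $S_c(ab)$.
\item If, say, $ab=a^2b$, the natural quotient is instead $S_7$, with classes $\{a,a^2\}$, $\{b,ab,b+ab\}$ and the rest. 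One then concludes using $S_c(ab)\in\mathsf{V}(S_7)$.
\end{itemize}
This degenerate case is exactly what the $S_7$ example above exhibits, and it is why no subalgebra construction inside $\langle a,b\rangle$ can suffice.
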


\begin{proof}
Suppose that $\mathcal V$ satisfies \eqref{eq:26}.
Since $S_c(ab)$ fails this identity,
$S_c(ab)\notin\mathcal V$.
Conversely, suppose that $\mathcal V$ fails
\eqref{eq:26}.
By \eqref{eq:8}, there exist $S\in\mathcal V$
and $a,b\in S$ such that $ab<a^2b^2$.
Let $B=\langle a,b\rangle$.
By \eqref{eq:12}, \eqref{eq:2} and distributivity,
every element of $B$ is a nonempty sum of expressions from
\[
\{a,b,ab,a^2,b^2,a^2b,ab^2,a^2b^2\}.
\]

\textbf{Case 1.}
$ab\neq a^2b$ and $ab\neq ab^2$.
Using \eqref{eq:2}, \eqref{eq:8}, \eqref{eq:12}, and \eqref{eq:21}--\eqref{eq:23}, one checks that a sum
of the listed expressions can equal $a$, $b$ or $ab$
only when all its summands are, respectively,
$a$, $b$ or $ab$.
Consider the subsets
\[
R_1=\{a\},\qquad
R_2=\{b\},\qquad
R_3=\{ab\},\qquad
R_4=B\setminus\{a,b,ab\}.
\]
These sets are pairwise disjoint and nonempty;
in particular, $a^2b^2\in R_4$.
The preceding observation gives
\[
R_4+B\subseteq R_4,\qquad
R_4B\subseteq R_4.
\]
Moreover, sums of distinct singleton classes lie in $R_4$,
and their products satisfy
\[
R_1R_2=R_2R_1=R_3,
\]
whereas
\[
R_iR_j\subseteq R_4
\quad\text{for }
1\leq i,j\leq4,\quad
(i,j)\notin\{(1,2),(2,1)\}.
\]
Thus the equivalence relation $\rho$ with classes
$R_1,R_2,R_3,R_4$ is compatible with both operations.
Hence $\rho$ is a semiring congruence and
$B/\rho\cong S_c(ab)$.
Consequently, $S_c(ab)\in\mathcal V$.

\textbf{Case 2.}
$ab=a^2b$ or $ab=ab^2$.
By symmetry, assume that $ab=a^2b$.
Then $ab^2=a^2b^2$.
Set
\[
R_1=\{a,a^2\},\qquad
R_2=\{b,ab,b+ab\},\qquad
R_3=B\setminus(R_1\cup R_2).
\]
A direct verification using \eqref{eq:2}, \eqref{eq:8}, \eqref{eq:12}, and \eqref{eq:21}--\eqref{eq:23}
and $ab<a^2b^2$ shows that these sets are pairwise
disjoint and nonempty, and that
\[
\begin{aligned}
R_i+R_i&\subseteq R_i
&& (1\leq i\leq3),\\
R_i+R_j&\subseteq R_3
&& (1\leq i,j\leq3,\ i\neq j).
\end{aligned}
\]
Their products satisfy
\[
\begin{gathered}
R_1R_1\subseteq R_1,\qquad
R_1R_2\subseteq R_2,\\
R_2R_2\subseteq R_3,\qquad
BR_3\subseteq R_3.
\end{gathered}
\]
Consequently, the equivalence relation $\rho$
with classes $R_1,R_2,R_3$ is a semiring congruence.
Its quotient is isomorphic to $S_7$, with
$R_1,R_2,R_3$ corresponding to $1,a,\infty$,
respectively.
Since $S_c(ab)\in\mathsf{V}(S_7)$,
we again obtain $S_c(ab)\in\mathcal V$.
\end{proof}

\begin{pro}\label{proabc}
Let $\mathcal V\leq\mathsf{V}(S_7^0)$.
Then $S_c(abc)\notin\mathcal V$ if and only if
$\mathcal V$ satisfies
\begin{equation}\label{eq:27}
(xyz)^2\approx xyz.
\end{equation}
\end{pro}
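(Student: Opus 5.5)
We follow the pattern of Proposition~\ref{proab}. The easy direction is immediate. In $S_c(abc)$ we have $abc\neq 0=(abc)^2$, so $S_c(abc)$ fails \eqref{eq:27}. Hence any $\mathcal V$ satisfying \eqref{eq:27} omits $S_c(abc)$.

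For the converse, suppose $\mathcal V$ fails \eqref{eq:27}. By \eqref{eq:8} there exist $S\in\mathcal V$ and $a,b,c\in S$ with $abc<(abc)^2$. Let $B=\langle a,b,c\rangle$. By \eqref{eq:2} and \eqref{eq:12}, every element of $B$ is a nonempty finite sum of monomials $a^{i}b^{j}c^{k}$ with $i,j,k\in\{0,1,2\}$, not all zero. So $B$ is finite and explicitly describable.

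We then split into cases, as in Proposition~\ref{proab}, according to whether $abc$ collapses with one of the monomials $a^2bc$, $ab^2c$, $abc^2$.

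\textbf{Case 1.} $abc$ differs from all three of these monomials. We aim to show that a sum of the listed monomials can equal a divisor $\bw$ of $abc$ (that is, one of $a,b,c,ab,ac,bc,abc$) only when every summand equals $\bw$. The tools are \eqref{eq:21}--\eqref{eq:23} together with the fact that squares absorb, e.g.\ $x^2+y\succeq x^2y^2$. Granting this, we take as classes the seven singletons $\{\bw\}$ for the divisors $\bw$ of $abc$, plus the remainder $R=B\setminus\{a,b,c,ab,ac,bc,abc\}$. We check that $R$ is an ideal for both operations. We also check that sums of distinct singletons lie in $R$, and that products of singletons behave as in $S_c(abc)$: a product lands in $R$ exactly when the corresponding word does not divide $abc$. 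This gives a congruence with quotient $S_c(abc)$.

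A subtle point is that the singletons must be pairwise distinct and different from $R$. For example, we need $ab\neq a$. This should follow from $abc<(abc)^2$: an equality like $ab=a$ would give $abc=a^2bc$ after multiplying by $ac$, or a similar collapse, contradicting the case hypothesis.

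\textbf{Case 2.} Some collapse occurs, say $abc=a^2bc$. Then $abc=a^2bc=(a^2)(bc)$, and setting $a'=a^2$ we get $a'bc=abc$. The idea is to reduce to a two-variable situation. Consider $u=a^2$, which is multiplicatively idempotent-like since $u^2=u$ by \eqref{eq:12}, together with $b$ and $c$. Alternatively, replace the triple by $(b,c)$ with an extra unit-like element: then $bc\cdot a^2=abc<(abc)^2$ forces $bc<(bc)^2$. By Proposition~\ref{proab} this yields only $S_c(ab)$, which is not enough.

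So instead we mimic Case~2 of Proposition~\ref{proab}. We build a quotient isomorphic to a semiring known to generate $S_c(abc)$, or directly a quotient in which $a^2$ plays the role of $1$. We try classes $\{a,a^2\}$ (acting as identity-like), then the classes of $b$, $c$, $bc$ (merged with $ab$, $ac$, $abc$ respectively), and a top class. If all three monomials $a^2bc$, $ab^2c$, $abc^2$ collapse, a similar quotient should be $S_c(abc)$ adjoined with identity-type elements, from which $S_c(abc)$ is recovered as a subsemiring-quotient.

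The main obstacle is the case analysis establishing that the designated classes are well defined and that the remainder is absorbing. This requires a careful normal-form calculation for sums of monomials in $B$ using \eqref{eq:21}--\eqref{eq:23}. Above all, it requires correctly handling the degenerate Case~2, where a collapse creates identity-like elements. We expect to resolve that case by showing that the quotient obtained has $S_c(abc)$ as a subquotient, via the substructure generated by $a^2b$, $c$ and so on, rather than insisting on a direct isomorphism.
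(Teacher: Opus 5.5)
Your Case~1 matches the paper's argument. The key claim you only grant follows from \eqref{eq:8} and \eqref{eq:22}: for instance, $a\leq abc$ together with $a+abc\approx a+ab^2c^2$ gives $ab^2c^2\leq abc$, hence $abc=ab^2c$ by \eqref{eq:8}. Also, to get a collapse from $ab=a$ you should multiply by $bc$, not by $ac$.

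The genuine gap is Case~2, where you have no working argument. The classes you try are $\{a,a^2\}$, $\{b,ab\}$, $\{c,ac\}$, $\{bc,abc\}$ and a top class, and they need not form a congruence, because $abc=a^2bc$ does not force $a^2b=ab$. For example, write $S_7=\{1,e,\infty\}$ with $e^2=\infty$, and take $a=(1,p)$, $b=(e,q)$, $c=(1,q)$ in $S_7\times S_c(pq)\in\mathsf{V}(S_7)$. Then $abc=a^2bc=(e,0)<(\infty,0)=(abc)^2$, yet $a\cdot ab=(e,0)\notin\{b,ab\}$. Your sub-case in which all three monomials collapse is vacuous, since $abc=a^2bc=ab^2c=abc^2$ forces $abc=a^2b^2c^2$. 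The fallback of recovering $S_c(abc)$ as a subquotient is never carried out.

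The missing idea is to regroup the generators as $a$ and $bc$, rather than $b$ and $c$. These satisfy $a\cdot bc=a^2\cdot bc$ and $a\cdot bc<(a\cdot bc)^2$, which is exactly the hypothesis of Case~2 in the proof of Proposition~\ref{proab}. That construction, with classes $\{a,a^2\}$, $\{bc,abc,bc+abc\}$ and the remainder of $\langle a,bc\rangle$, yields a quotient isomorphic to $S_7$. Since $S_c(abc)\in\mathsf{V}(S_7)$ by the known lattice $\mathcal L(\mathsf{V}(S_7))$, this gives $S_c(abc)\in\mathcal V$ directly, with no further case analysis. You came close when you observed $bc<(bc)^2$. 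However, applying Proposition~\ref{proab} only as a black box discards the fact that its Case~2 produces $S_7$, which is strong enough, and not merely $S_c(ab)$.
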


\begin{proof}
Suppose that $\mathcal V$ satisfies \eqref{eq:27}.
Since $S_c(abc)$ fails this identity,
$S_c(abc)\notin\mathcal V$.
Conversely, suppose that $\mathcal V$ fails
\eqref{eq:27}.
By \eqref{eq:8}, there exist $S\in\mathcal V$
and $a,b,c\in S$ such that
\[
abc<a^2b^2c^2.
\]
Let $B=\langle a,b,c\rangle$.
By \eqref{eq:12}, \eqref{eq:2} and distributivity,
every element of $B$ is a nonempty sum of expressions from
\[
\begin{gathered}
a,\ b,\ c,\ a^2,\ b^2,\ c^2,\ ab,\ ac,\ bc,\\
a^2b,\ ab^2,\ a^2c,\ ac^2,\ b^2c,\ bc^2,\\
a^2b^2,\ a^2c^2,\ b^2c^2,\\
abc,\ a^2bc,\ ab^2c,\ abc^2,\
a^2b^2c,\ a^2bc^2,\ ab^2c^2,\ a^2b^2c^2.
\end{gathered}
\]

\textbf{Case 1.}
$abc\neq a^2bc$, $abc\neq ab^2c$ and $abc\neq abc^2$.
Using \eqref{eq:8} and \eqref{eq:22}, one checks that
none of the listed expressions other than $abc$
is below $abc$.
Consequently, a sum of these expressions can equal
$abc$ only when all its summands are $abc$.
Multiplying an equality with value $a$ by $bc$,
or one with value $ab$ by $c$, and using symmetry,
gives the analogous statement for
$a,b,c,ab,ac,bc$.

Consider the subsets
\[
\begin{gathered}
R_1=\{a\},\qquad R_2=\{b\},\qquad R_3=\{c\},\\
R_4=\{ab\},\qquad R_5=\{ac\},\qquad R_6=\{bc\},\\
R_7=\{abc\},\qquad
R_8=B\setminus\{a,b,c,ab,ac,bc,abc\}.
\end{gathered}
\]
The preceding observation shows that these sets are
pairwise disjoint and nonempty;
in particular, $a^2b^2c^2\in R_8$.
It also gives
\[
R_8+B\subseteq R_8,\qquad R_8B\subseteq R_8.
\]
Sums of distinct singleton classes lie in $R_8$.
Apart from those obtained by commutativity,
the products outside $R_8$ are precisely
\[
\begin{aligned}
R_1R_2&=R_4,&
R_1R_3&=R_5,&
R_2R_3&=R_6,\\
R_1R_6&=R_7,&
R_2R_5&=R_7,&
R_3R_4&=R_7.
\end{aligned}
\]
Thus the equivalence relation $\rho$ with classes
$R_1,\ldots,R_8$ is compatible with both operations.
Hence $\rho$ is a semiring congruence and
\[
B/\rho\cong S_c(abc).
\]
Consequently, $S_c(abc)\in\mathcal V$.

\textbf{Case 2.}
At least one of
\[
abc=a^2bc,\qquad abc=ab^2c,\qquad abc=abc^2
\]
holds.
By symmetry, assume that $abc=a^2bc$.
Then
\[
a\cdot bc=a^2\cdot bc,
\qquad
a\cdot bc\neq(a\cdot bc)^2.
\]
Applying Case~2 in the proof of Proposition~\ref{proab}
to the generators $a$ and $bc$, we obtain a semiring
congruence $\rho$ on $\langle a,bc\rangle$ such that
\[
\langle a,bc\rangle/\rho\cong S_7.
\]
Thus $S_7\in\mathcal V$.
Since $S_c(abc)\in\mathsf{V}(S_7)$,
we again obtain $S_c(abc)\in\mathcal V$.
\end{proof}

\subsection{The six fibres and the main theorem}

From \cite[Corollary 2.5]{gmrz} we know that every subvariety of $\mathsf{V}(S^0_7)$
that contains $S_c(abc)$ is nonfinitely based.
To answer the finite basis problem for the subvarieties of $\mathsf{V}(S^0_7)$,
it is enough to
describe the subvarieties of $\mathsf{V}(S^0_7)$ that do not contain $S_c(abc)$.
Let $\mathcal{L}(\mathsf{V}(S_7^0))$ and $\mathcal{L}(\mathsf{V}(S_7))$
denote the subvariety lattices of $\mathsf{V}(S_7^0)$ and $\mathsf{V}(S_7)$, respectively.
It is natural to consider the following mapping
\[
\varphi: \mathcal{L}(\mathsf{V}(S_7^0)) \to \mathcal{L}(\mathsf{V}(S_7)), ~\mathcal{V} \mapsto \mathcal{V} \cap \mathsf{V}(S_7).
\]
Then $\varphi$ is surjective and so
\[
\mathcal{L}(S_7^0)=\bigcup_{\mathcal{W}\in \mathcal{L}(\mathsf{V}(S_7))}\varphi^{-1}(\mathcal{W}).
\]
Suppose that $\mathcal{V}$ is a subvariety of $\mathsf{V}(S_7^0)$. Then
\[
S_c(abc) \in \mathcal{V} \Leftrightarrow S_c(abc) \in \mathcal{V}\wedge\mathsf{V}(S_7)
\Leftrightarrow S_c(abc) \in \varphi(\mathcal{V}).
\]
Consequently, $\mathcal{V}$ contains $S_c(abc)$ if and only if $\varphi(\mathcal{V})$ contains $S_c(abc)$.
Therefore, $\mathcal{V}$ does not contain $S_c(abc)$ if and only if $\varphi(\mathcal{V})$ does not contain $S_c(abc)$.
From \cite{gmrz} we know that there are $6$ subvarieties of $\mathsf{V}(S_7)$
that do not contain $S_c(abc)$: the trivial variety $\mathbf{T}$, $\mathsf{V}(M_2)$, $\mathsf{V}(T_2)$,
$\mathsf{V}(S_c(ab))$, $\mathsf{V}(M_2, S_c(a))$ and $\mathsf{V}(M_2, S_c(ab))$.
This requires us to characterize $\varphi^{-1}(\mathcal{W})$ for
\[
\mathcal{W}\in
\begin{gathered}
\{\mathbf{T},\mathsf{V}(M_2),\mathsf{V}(T_2),
\mathsf{V}(S_c(ab)),\mathsf{V}(M_2,S_c(a)),
\mathsf{V}(M_2,S_c(ab))\}.
\end{gathered}
\]

\begin{pro}\label{profibres}
\hspace*{\fill}
\begin{itemize}
\item[$(1)$]
$\varphi^{-1}(\mathbf{T})
=\{\mathbf{T},\mathsf{V}(D_2)\}$;

\item[$(2)$]
$\varphi^{-1}(\mathsf{V}(M_2))
=\{\mathsf{V}(M_2),\mathsf{V}(M_2,D_2),
\mathsf{V}(M_2^0)\}$;

\item[$(3)$]
$\varphi^{-1}(\mathsf{V}(T_2))
=[\mathsf{V}(T_2),\mathsf{V}(T_2^0)]$;

\item[$(4)$]
$\varphi^{-1}(\mathsf{V}(S_c(ab)))
=[\mathsf{V}(S_c(ab)),\mathsf{V}(S_c(ab)^0)]$;

\item[$(5)$]
$\varphi^{-1}(\mathsf{V}(M_2,T_2))
=[\mathsf{V}(M_2,T_2),\mathsf{V}(M_2^0,T_2^0)]$;

\item[$(6)$]
$\varphi^{-1}(\mathsf{V}(M_2,S_c(ab)))
=[\mathsf{V}(M_2,S_c(ab)),
\mathsf{V}(M_2^0,S_c(ab)^0)]$.
\end{itemize}
\end{pro}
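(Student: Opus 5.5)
The plan is to treat the six fibres uniformly. For each $\mathcal W$, first determine which of the small generators $S_c(a)$, $M_2$, $T_2$, $D_2$, $S_c(ab)$ (and their zero extensions) lie in a variety $\mathcal V$ with $\varphi(\mathcal V)=\mathcal W$. Then use the omission criteria of Propositions~\ref{pro25032101}, \ref{pro25032102}, \ref{pro25032103}, \ref{proab} and \ref{proabc} to bound $\mathcal V$ from above. The lower bound $\mathcal W\leq\mathcal V$ is automatic. Because $M_2,T_2,S_c(a),S_c(ab),S_c(abc)\in\mathsf{V}(S_7)$, we have $X\in\mathcal V$ iff $X\in\varphi(\mathcal V)$ for each such $X$. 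Hence the identities \eqref{eq:1}, \eqref{eq:24}, \eqref{eq:26}, \eqref{eq:27} and the identities omitting $T_2$ hold in $\mathcal V$ exactly when they hold in $\mathcal W$. Note also that $D_2,M_2^0,T_2^0,S_c(ab)^0$ all have $\varphi$-image inside $\mathcal W$ whenever the corresponding non-zero-extended algebra lies in $\mathcal W$: for example, $\mathsf{V}(T_2^0)\cap\mathsf{V}(S_7)=\mathsf{V}(T_2)$ since $T_2^0$ satisfies the $T_2$-omitting identity restricted appropriately. This gives the reverse inclusions, i.e.\ that each listed variety actually lies in the fibre.

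For the upper bounds, the steps are as follows.
\begin{itemize}
\item[(1)] If $\varphi(\mathcal V)=\mathbf T$, then $S_c(a)\notin\mathcal V$. So $\mathcal V\leq\mathbf I=\mathsf{V}(M_2^0)$ by Propositions~\ref{pro25032101} and \ref{pro25032110}. Since $M_2\notin\mathcal V$, Proposition~\ref{pro25032110} leaves only $\mathbf T$ and $\mathsf{V}(D_2)$.
\item[(2)] The same argument, with $M_2\in\mathcal V$, leaves $\mathsf{V}(M_2)$, $\mathsf{V}(M_2,D_2)$ and $\mathsf{V}(M_2^0)$.
\item[(3), (5)] Here $S_c(ab)\notin\mathcal V$, so $\mathcal V$ satisfies \eqref{eq:26}. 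Combining \eqref{eq:26} with \eqref{eq:2}, \eqref{eq:8}, \eqref{eq:12}, \eqref{eq:21}--\eqref{eq:23}, I would derive the basis \eqref{eq:2}, \eqref{eq:6}--\eqref{eq:8} of $\mathsf{V}(M_2^0,T_2^0)$ from Lemma~\ref{m0t0}.
\begin{itemize}
\item \eqref{eq:7}: $x^2y=x^2y^2$ follows from \eqref{eq:21} together with \eqref{eq:26} applied to $xy$, giving $xy=(xy)^2$. Hence $x^2y\approx xy$.
\item \eqref{eq:6}: $xy+z\succeq (xy)^2+z$, then \eqref{eq:23}-type reasoning gives $\succeq (xy)^2z^2=xyz$ after reducing by \eqref{eq:7}.
\end{itemize}
This yields $\mathcal V\leq\mathsf{V}(M_2^0,T_2^0)$. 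In case (3), additionally $M_2\notin\mathcal V$, so \eqref{eq:24} holds. I would then show \eqref{eq:24} plus this basis forces $\mathcal V\leq\mathsf{V}(T_2^0)$, and conclude with Proposition~\ref{pro:three-varieties}. In case (5), Proposition~\ref{pro:five-varieties} directly identifies the interval.
\item[(4), (6)] $S_c(ab)\in\mathcal V$ but $S_c(abc)\notin\mathcal V$, so \eqref{eq:27} holds. From \eqref{eq:27}, \eqref{eq:12} and \eqref{eq:8}, I would derive \eqref{eq:13}: $xyzt\preceq xyz+t$ via $xyz+t\succeq (xyz)^2+t^2\succeq\cdots$ using \eqref{eq:23}. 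I would also derive \eqref{eq:20} from \eqref{eq:22} and \eqref{eq:8}. The $\sigma_n$ and \eqref{eq:2}, \eqref{eq:8}, \eqref{eq:12} are given by Lemma~\ref{lem:S70-identities}. So $\mathcal V\leq\mathsf{V}(S_c(ab)^0,M_2^0)$ by Proposition~\ref{proab0m0}. In case (4), $M_2\notin\mathcal V$ gives \eqref{eq:24}, and I would check that \eqref{eq:24} together with Proposition~\ref{proab0m0}'s basis implies \eqref{eq:30} and \eqref{eq:31}. This places $\mathcal V$ below $\mathsf{V}(S_c(ab)^0)$ by Proposition~\ref{proab0basis}.
\end{itemize}

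The main obstacle is these syntactic derivations, specifically the ones that pass from $\mathsf{V}(S_7^0)$-identities plus one omission identity to the target bases: \eqref{eq:6} and \eqref{eq:13} from \eqref{eq:26} or \eqref{eq:27}, and \eqref{eq:30} and \eqref{eq:31} from \eqref{eq:24}. For these I would first try the substitution trick used in Proposition~\ref{proabrelative2}, together with \eqref{eq:23}, and fall back on a semantic check in $S_7^0$ if a derivation stalls. A secondary check is that each listed variety really has the right image under $\varphi$. For this I would test $D_2$, $M_2^0$, $T_2^0$ and $S_c(ab)^0$ against the omission criteria for $S_c(a)$, $M_2$, $S_c(ab)$ and $S_c(abc)$.
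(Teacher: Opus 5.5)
Your plan matches the paper's proof. You translate $\varphi(\mathcal V)=\mathcal W$ into membership or non-membership of $S_c(a)$, $M_2$, $S_c(ab)$ and $S_c(abc)$. You turn each non-membership into \eqref{eq:1}, \eqref{eq:24}, \eqref{eq:26} or \eqref{eq:27}. From these you reach the bases of Lemma~\ref{m0t0}, Proposition~\ref{proab0m0}, $\mathsf{V}(T_2^0)$ and Proposition~\ref{proab0basis}. Reading off item~(1) from the list in Proposition~\ref{pro25032110}, rather than from the paper's remark that \eqref{eq:1}, \eqref{eq:2}, \eqref{eq:24} define $\mathsf{V}(D_2)$, is a harmless variant.

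One step, however, fails as written. In deriving \eqref{eq:13} you start with $xyz+t\succeq(xyz)^2+t^2$, but only $t\preceq t^2$ is available. This inequality is false in the fibre. It would give $t^2\preceq xyz+t$, which fails in $S_c(ab)^0$ when $x$ is sent to the adjoined (bottom) zero and $t$ to $a$: the right side becomes $a$, while $a^2$ is the top element. The fix is not to square $t$:
\[
xyz+t\approx(xyz)^2+t\succeq(xyz)^2t^2\succeq xyzt,
\]
using \eqref{eq:27}, then \eqref{eq:23} with $x\mapsto xyz$, $y\mapsto t$, then \eqref{eq:8}. This is exactly the paper's derivation, and it parallels the one you wrote correctly for \eqref{eq:6}.

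There are also three smaller points.
\begin{enumerate}
\item Your justification of $\mathsf{V}(T_2^0)\cap\mathsf{V}(S_7)=\mathsf{V}(T_2)$ is backwards. $T_2^0$ contains $T_2$, so it fails \eqref{eq:1}. What matters is that $T_2^0$ satisfies \eqref{eq:24} and \eqref{eq:26}. Hence the intersection contains $T_2$ but neither $M_2$ nor $S_c(ab)$, which is the test your final paragraph correctly describes.
\item For \eqref{eq:7}, \eqref{eq:21} plays no role. Substitute $x\mapsto x^2$ in \eqref{eq:26} and use \eqref{eq:12} to get $x^2y\approx x^2y^2\approx xy$.
\item For the step you leave open in item~(3), the consequence of \eqref{eq:24} the paper extracts is $xy\approx(xy)^2\succeq(xy)^2z^2\succeq xyz$. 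Together with \eqref{eq:2} and \eqref{eq:6}--\eqref{eq:8}, this axiomatizes $\mathsf{V}(T_2^0)$.
\end{enumerate}
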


\begin{proof}
Throughout the proof, let
$\mathcal V\leq\mathsf{V}(S_7^0)$.

$(1)$ By the known subvariety lattice of
$\mathsf{V}(S_7)$,
\[
\varphi(\mathcal V)=\mathbf T
\quad\Longleftrightarrow\quad
M_2,T_2\notin\mathcal V.
\]
By Propositions~\ref{pro25032101}
and~\ref{pro25032102}, this is equivalent to
$\mathcal V$ satisfying \eqref{eq:1} and \eqref{eq:24}.
Together with commutativity, these identities define
$\mathsf{V}(D_2)$.
Thus $\mathcal V$ is either $\mathbf T$ or
$\mathsf{V}(D_2)$.

$(2)$ Similarly,
\[
\varphi(\mathcal V)=\mathsf{V}(M_2)
\quad\Longleftrightarrow\quad
M_2\in\mathcal V,\quad T_2\notin\mathcal V.
\]
By Proposition~\ref{pro25032101},
this is equivalent to
\[
\mathsf{V}(M_2)\leq\mathcal V
\leq\mathsf{V}(M_2^0).
\]
The assertion now follows from
Proposition~\ref{pro25032110}.

For the remaining cases, the preceding identities
and equational bases give
\[
\begin{aligned}
\mathcal V\models\eqref{eq:26}
&\quad\Longleftrightarrow\quad
\mathcal V\leq\mathsf{V}(M_2^0,T_2^0),\\
\mathcal V\models\eqref{eq:27}
&\quad\Longleftrightarrow\quad
\mathcal V\leq\mathsf{V}(S_c(ab)^0,M_2^0).
\end{aligned}
\]
Indeed, \eqref{eq:26} implies \eqref{eq:7}, and,
using \eqref{eq:23},
\[
xy+z
\approx (xy)^2+z
\succeq (xy)^2z^2
\succeq xyz.
\]
This gives the first equivalence by Lemma~\ref{m0t0}.
Likewise, under \eqref{eq:27},
\[
xyz+t
\approx (xyz)^2+t
\succeq (xyz)^2t^2
\succeq xyzt,
\]
which gives the second equivalence by
Proposition~\ref{proab0m0}.
The reverse implications follow directly from
the corresponding bases.

Moreover, adjoining \eqref{eq:24} to these two
relative bases gives, respectively,
$\mathsf{V}(T_2^0)$ and $\mathsf{V}(S_c(ab)^0)$.
For the former, observe that
\[
xy
\approx (xy)^2
\succeq (xy)^2z^2
\succeq xyz;
\]
 the latter follows from Proposition~\ref{proab0basis}.

$(3)$ By the known subvariety lattice of
$\mathsf{V}(S_7)$ and the exclusion criteria,
\[
\begin{aligned}
\varphi(\mathcal V)=\mathsf{V}(T_2)
&\Longleftrightarrow
T_2\in\mathcal V,\quad
M_2,S_c(ab)\notin\mathcal V\\
&\Longleftrightarrow
\mathsf{V}(T_2)\leq\mathcal V
\leq\mathsf{V}(T_2^0).
\end{aligned}
\]

$(4)$ Similarly,
\[
\begin{aligned}
\varphi(\mathcal V)=\mathsf{V}(S_c(ab))
&\Longleftrightarrow
S_c(ab)\in\mathcal V,\quad
M_2,S_c(abc)\notin\mathcal V\\
&\Longleftrightarrow
\mathsf{V}(S_c(ab))\leq\mathcal V
\leq\mathsf{V}(S_c(ab)^0).
\end{aligned}
\]

$(5)$ We have
\[
\begin{aligned}
\varphi(\mathcal V)=\mathsf{V}(M_2,T_2)
&\Longleftrightarrow
M_2,T_2\in\mathcal V,\quad
S_c(ab)\notin\mathcal V\\
&\Longleftrightarrow
\mathsf{V}(M_2,T_2)\leq\mathcal V
\leq\mathsf{V}(M_2^0,T_2^0).
\end{aligned}
\]

$(6)$ Finally,
\[
\begin{aligned}
\varphi(\mathcal V)=\mathsf{V}(M_2,S_c(ab))
&\Longleftrightarrow
M_2,S_c(ab)\in\mathcal V,\quad
S_c(abc)\notin\mathcal V\\
&\Longleftrightarrow
\mathsf{V}(M_2,S_c(ab))\leq\mathcal V
\leq\mathsf{V}(M_2^0,S_c(ab)^0).
\end{aligned}
\]
This completes the proof.
\end{proof}

We are now ready to state the main result.

\begin{thm}\label{thm:main}
The variety $\mathsf{V}(S_7^0)$ has exactly fifteen finitely
based subvarieties, namely
\[
\begin{gathered}
\mathbf T,\qquad \mathsf{V}(D_2),\\
\mathsf{V}(M_2),\qquad
\mathsf{V}(M_2,D_2),\qquad
\mathsf{V}(M_2^0),\\
\mathsf{V}(T_2),\qquad
\mathsf{V}(T_2,D_2),\qquad
\mathsf{V}(T_2^0),\\
\mathsf{V}(S_c(ab)),\\
\mathsf{V}(M_2,T_2),\qquad
\mathsf{V}(M_2,T_2,D_2),\\
\mathsf{V}(M_2^0,T_2),\qquad
\mathsf{V}(M_2,T_2^0),\qquad
\mathsf{V}(M_2^0,T_2^0),\\
\mathsf{V}(M_2,S_c(ab)).
\end{gathered}
\]
Every other subvariety of $\mathsf{V}(S_7^0)$ is
nonfinitely based.
\end{thm}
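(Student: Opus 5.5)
The plan is to split the subvariety lattice of $\mathsf{V}(S_7^0)$ according to whether a subvariety contains $S_c(abc)$, and then to work through the fibres of $\varphi$ one by one.

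First, let $\mathcal V\leq\mathsf{V}(S_7^0)$ contain $S_c(abc)$. Then $\mathcal V$ is nonfinitely based by \cite[Corollary 2.5]{gmrz}, as recalled before Proposition~\ref{profibres}. None of the fifteen listed varieties contains $S_c(abc)$. Indeed, each of them lies below $\mathsf{V}(M_2^0,T_2^0)$ or below $\mathsf{V}(S_c(ab)^0,M_2^0)$, and both of these satisfy \eqref{eq:27} by their bases (Lemma~\ref{m0t0} and Proposition~\ref{proab0m0}, where \eqref{eq:13} together with \eqref{eq:12} and \eqref{eq:8} forces $(xyz)^2\approx xyz$). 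So by Proposition~\ref{proabc} they omit $S_c(abc)$.

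Now suppose $S_c(abc)\notin\mathcal V$. Then $\varphi(\mathcal V)$ is one of the six varieties of $\mathsf{V}(S_7)$ omitting $S_c(abc)$. Hence $\mathcal V$ lies in exactly one of the six fibres described in Proposition~\ref{profibres}, and I go through them in turn.
\begin{itemize}
\item Fibre (1) is $\{\mathbf T,\mathsf{V}(D_2)\}$. Both are finitely based: $\mathsf{V}(D_2)$ by the identities used in the proof of Proposition~\ref{profibres}(1), or by Lemma~\ref{lem:small-tests}(2).
\item Fibre (2) is $\{\mathsf{V}(M_2),\mathsf{V}(M_2,D_2),\mathsf{V}(M_2^0)\}$, each finitely based. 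Proposition~\ref{promd} handles $\mathsf{V}(M_2,D_2)$. Proposition~\ref{pro25032110} gives the basis \eqref{eq:1}, \eqref{eq:2} for $\mathsf{V}(M_2^0)$. For $\mathsf{V}(M_2)$, one adds $x\preceq xy$ to that basis, using Lemma~\ref{lem:small-tests}(1).
\item Fibre (3) is the interval $[\mathsf{V}(T_2),\mathsf{V}(T_2^0)]$. By Proposition~\ref{pro:three-varieties} it consists of three finitely based varieties.
\item Fibre (5) consists of five finitely based varieties, by Proposition~\ref{pro:five-varieties}.
\item Fibres (4) and (6) are the two intervals of Corollary~\ref{cor:twointervals}. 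Each contributes exactly one finitely based member, namely $\mathsf{V}(S_c(ab))$ and $\mathsf{V}(M_2,S_c(ab))$ respectively, and all other members are nonfinitely based.
\end{itemize}
Counting gives $2+3+3+1+5+1=15$.

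The remaining step is to check that the fifteen varieties are pairwise distinct, so the count is exact. Varieties in different fibres are distinct because their images under $\varphi$ differ. Within a fibre, distinctness follows from the omission criteria: Propositions~\ref{pro25032101}, \ref{pro25032102} and~\ref{pro25032103} for $S_c(a)$, $M_2$ and $D_2$, and Proposition~\ref{prosmallexclusions} for $T_2^0$ and $M_2^0$. In each case some generator fails a defining identity of a smaller variety.

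I expect no serious obstacle, since the argument is mostly bookkeeping over Proposition~\ref{profibres}. The one point needing care is that the fibres are exhaustive, i.e. that every $\mathcal V$ omitting $S_c(abc)$ has image among the six listed subvarieties of $\mathsf{V}(S_7)$. This rests on the description of $\mathcal L(\mathsf{V}(S_7))$ from \cite{gmrz}. A second point is that fibre (5) must be indexed by $\mathsf{V}(M_2,T_2)$ (and the listed $\mathsf{V}(M_2,S_c(a))$ identified appropriately) in order to match the six cases of Proposition~\ref{profibres}; I would simply invoke that proposition as stated.
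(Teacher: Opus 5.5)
Your proposal is correct and follows the paper's own proof. Both arguments discard the subvarieties containing $S_c(abc)$ and then count the finitely based members of the six fibres of Proposition~\ref{profibres}, using Propositions~\ref{pro:three-varieties}, \ref{pro:five-varieties} and Corollary~\ref{cor:twointervals}, to get $2+3+3+1+5+1=15$. Your extra checks only make explicit what the paper leaves implicit: that the fifteen satisfy \eqref{eq:27}, the explicit bases for $\mathsf{V}(D_2)$, $\mathsf{V}(M_2)$ and $\mathsf{V}(M_2^0)$, and pairwise distinctness. Your indexing worry about $\mathsf{V}(M_2,S_c(a))$ is harmless, since $S_c(a)\cong T_2$.
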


\begin{proof}
By \cite[Corollary~2.5]{gmrz}, every subvariety of
$\mathsf{V}(S_7^0)$ containing $S_c(abc)$ is nonfinitely
based. Hence it remains to consider the six fibres in
Proposition~\ref{profibres}.

The first and second fibres contain two and three finitely based
varieties, respectively. By Proposition~\ref{pro:three-varieties},
the third fibre contains three. By
Corollary~\ref{cor:twointervals}, the fourth fibre contains only
one finitely based variety. Proposition~\ref{pro:five-varieties}
shows that all five members of the fifth fibre are finitely
based. Finally, Corollary~\ref{cor:twointervals} shows that the
sixth fibre also contains only one finitely based variety.
Therefore the total number is
\[
2+3+3+1+5+1=15.
\]
These are exactly the varieties displayed above, and their
finite equational bases have been given in the preceding
propositions. All remaining members of the fourth and sixth
fibres are nonfinitely based by
Corollary~\ref{cor:twointervals}. This proves both assertions.
\end{proof}

\begin{cor}\label{cor:limit-subvarieties}
The varieties $\mathsf{V}(S_c(abc))$ and $\mathsf{V}(SR_6)$
are the only limit subvarieties of $\mathsf{V}(S_7^0)$.
\end{cor}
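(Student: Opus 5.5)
We must show two things: both varieties are limit varieties, and no other subvariety of $\mathsf{V}(S_7^0)$ is one.

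For the first claim we use known results. $\mathsf{V}(S_c(abc))$ is a limit variety by \cite[Theorem~3.6]{rjzl}, and $\mathsf{V}(SR_6)$ is a limit variety by \cite{lry}. Both lie in $\mathsf{V}(S_7^0)$. For $SR_6$ this is used in the proof of Proposition~\ref{proSRrelative}. For $S_c(abc)$ it holds because $S_c(abc)\in\mathsf{V}(S_7)\leq\mathsf{V}(S_7^0)$.

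For the converse, let $\mathcal L$ be a limit subvariety of $\mathsf{V}(S_7^0)$. It is nonfinitely based, so by Theorem~\ref{thm:main} it is not one of the fifteen listed varieties. We split into two cases according to whether $S_c(abc)\in\mathcal L$.

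\textbf{Case 1: $S_c(abc)\in\mathcal L$.} Then $\mathsf{V}(S_c(abc))\leq\mathcal L$. Since $\mathsf{V}(S_c(abc))$ is nonfinitely based and every proper subvariety of $\mathcal L$ is finitely based, minimality forces $\mathcal L=\mathsf{V}(S_c(abc))$.

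\textbf{Case 2: $S_c(abc)\notin\mathcal L$.} Then $\varphi(\mathcal L)$ omits $S_c(abc)$, so $\mathcal L$ lies in one of the six fibres of Proposition~\ref{profibres}. Fibres (1), (2), (3) and (5) consist entirely of finitely based varieties, by Propositions~\ref{pro25032110}, \ref{pro:three-varieties} and \ref{pro:five-varieties} and the explicit lists. Hence $\mathcal L$ lies in fibre (4) or (6), and is not the least member of that interval. Proposition~\ref{prop:strictcontains} then gives $SR_6\in\mathcal L$, i.e.\ $\mathsf{V}(SR_6)\leq\mathcal L$. Since $\mathsf{V}(SR_6)$ is nonfinitely based, minimality again forces $\mathcal L=\mathsf{V}(SR_6)$.

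Finally, the two varieties are distinct. $S_c(abc)$ fails \eqref{eq:27}, whereas $\mathsf{V}(SR_6)$ lies in $\mathsf{V}(S_c(ab)^0)$, which satisfies \eqref{eq:27} via Proposition~\ref{proab0m0} and the relative basis computation. So $S_c(abc)\notin\mathsf{V}(SR_6)$ by Proposition~\ref{proabc}.

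The only delicate step is confirming that both cited varieties really are subvarieties of $\mathsf{V}(S_7^0)$, which is covered by the inclusions above. Everything else is bookkeeping from Theorem~\ref{thm:main}.
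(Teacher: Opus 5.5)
Your proposal is correct and follows the paper's proof. Both split on whether $S_c(abc)\in\mathcal L$, use minimality above $\mathsf{V}(S_c(abc))$ in the first case, and in the second case rule out fibres (1), (2), (3), (5) and then apply Proposition~\ref{prop:strictcontains} with minimality above $\mathsf{V}(SR_6)$. Your extra checks are sound additions that the paper leaves implicit: that both varieties lie in $\mathsf{V}(S_7^0)$, and that they are distinct via \eqref{eq:27}.
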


\begin{proof}
By \cite[Theorem~3.6]{rjzl} and \cite[Theorem~5.4]{lry},
$\mathsf{V}(S_c(abc))$ and $\mathsf{V}(SR_6)$ are limit varieties.
Let $\mathcal V$ be a limit subvariety of
$\mathsf{V}(S_7^0)$. If $S_c(abc)\in\mathcal V$, then
$\mathsf{V}(S_c(abc))\leq\mathcal V$. Since the former is
nonfinitely based, the minimality of $\mathcal V$ gives
$\mathcal V=\mathsf{V}(S_c(abc))$.

Suppose that $S_c(abc)\notin\mathcal V$. By
Proposition~\ref{profibres}, $\mathcal V$ belongs to one of the six
fibres. The first, second, third and fifth fibres contain only
finitely based varieties. Hence $\mathcal V$ belongs to the fourth
or the sixth fibre and is not its least member. Proposition
~\ref{prop:strictcontains} yields
$\mathsf{V}(SR_6)\leq\mathcal V$. The minimality of $\mathcal V$
now gives $\mathcal V=\mathsf{V}(SR_6)$.
\end{proof}

\begin{figure}[p]
\centering
\resizebox{\textwidth}{!}{%
\begin{tikzpicture}[x=0.8cm,y=0.96cm,
  every node/.style={font=\normalsize},
  lab/.style={inner sep=3pt},
  fb/.style={circle,draw,fill=black,inner sep=0pt,minimum size=4.5pt},
  nfb/.style={circle,draw,fill=white,inner sep=0pt,minimum size=5pt},
  edge/.style={line width=0.55pt,preaction={draw=white,line width=2.3pt}},
  inc/.style={edge,dash pattern=on 2pt off 2pt}]

\coordinate (triv) at (0,0);
\coordinate (M) at (-4,1); \coordinate (D) at (0,1);
\coordinate (T) at (4,1); \coordinate (MD) at (-4,2);
\coordinate (MT) at (0,2); \coordinate (TD) at (4,2);
\coordinate (M0) at (-5,3); \coordinate (B) at (0,3);
\coordinate (T0) at (5,3); \coordinate (C) at (-2.5,4);
\coordinate (E) at (2.5,4); \coordinate (I) at (0,5);
\coordinate (A) at (9,2); \coordinate (MA) at (7.5,3.8);
\coordinate (R) at (7,6.2); \coordinate (RM) at (7.6,7.35);
\coordinate (A0) at (3,8.5); \coordinate (W) at (4,10.2);
\coordinate (C3) at (11,6.2); \coordinate (MC3) at (12,7.7);
\coordinate (S7) at (10.8,10.5); \coordinate (S70) at (10.5,12);

\fill[black!7] (R)--(1.5,8.2)--(W)--(9,8.2)--cycle;
\fill[black!7] (C3)--(9.4,9.1)--(S70)--(13.4,9.1)--cycle;
\foreach \a/\b in {R/RM,R/A0,RM/W,A0/W,I/W,W/S70,MC3/S7,S7/S70}
  \draw[inc] (\a)--(\b);
\foreach \a/\b in {triv/M,triv/D,triv/T,M/MD,M/MT,D/MD,D/TD,
  T/MT,T/TD,MD/M0,MD/B,TD/T0,TD/B,MT/B,M0/C,T0/E,B/C,B/E,
  C/I,E/I,T/A,A/MA,MT/MA,A/R,MA/RM,A/C3,MA/MC3,C3/MC3}
  \draw[edge] (\a)--(\b);

\foreach \a in {triv,M,D,T,MD,MT,TD,M0,B,T0,C,E,I,A,MA}
  \node[fb] at (\a) {};
\foreach \a in {R,RM,A0,W,C3,MC3,S7,S70}
  \node[nfb] at (\a) {};

\node[lab,below=3pt] at (triv) {$\mathbf T$};
\node[lab,left=3pt] at (M) {$\mathsf{V}(M_2)$};
\node[lab,below right=2pt] at (D) {$\mathsf{V}(D_2)$};
\node[lab,below right=2pt] at (T) {$\mathsf{V}(T_2)$};
\node[lab,left=3pt] at (M0) {$\mathsf{V}(M_2^0)$};
\node[lab,below right=3pt] at (T0) {$\mathsf{V}(T_2^0)$};
\node[lab,right=4pt] at (A) {$\mathsf{V}(S_c(ab))$};
\node[lab,below left=3pt] at (R) {$\mathsf{V}(SR_6)$};
\node[lab,left=4pt] at (A0) {$\mathsf{V}(S_c(ab)^0)$};
\node[lab,below right=4pt] at (C3) {$\mathsf{V}(S_c(abc))$};
\node[lab,right=4pt] at (S7) {$\mathsf{V}(S_7)$};
\node[lab,above=5pt] at (S70) {$\mathsf{V}(S_7^0)$};

\node[align=center,font=\small] at (5.05,8.25) {NFB};
\node[align=center,font=\small] at (10.55,8.9) {NFB};
\node[fb] at (-3.9,-1.05) {};
\node[anchor=west,font=\small] at (-3.7,-1.05) {finitely based};
\node[nfb] at (1.2,-1.05) {};
\node[anchor=west,font=\small] at (1.4,-1.05) {nonfinitely based};
\draw[inc] (6.1,-1.05)--(6.8,-1.05);
\node[anchor=west,font=\small] at (7,-1.05) {proper inclusion};
\end{tikzpicture}%
}
\caption{The finite basis classification of the subvarieties of
$\mathsf{V}(S_7^0)$. The fifteen filled points are precisely the
finitely based subvarieties. The shaded nonfinitely based intervals
are not expanded, and unlabelled points represent joins. Crossings
without a point are not vertices.}
\label{fig:subvariety-lattice}
\end{figure}
\clearpage


\bibliographystyle{amsplain}

\end{document}